\newtheorem{remark}{Remark}
\newcommand\bs[1]{\boldsymbol{#1}}
\DeclareMathOperator\diag{diag}
\begin{document}
\title{Iterative methods of linearized moment equations for rarefied gases
\thanks{Submitted to arXiv July 10, 2024.
\funding{Zhenning Cai' s work was supported by the Academic Research Fund of the Ministry of Education of Singapore under grant
A-0008592-00-00.}
}}
\headers{Iterative methods of linearized moment equations}{Xiaoyu Dong and Zhenning Cai}

\author{
Xiaoyu Dong\thanks{Department of Mathematics, National University of Singapore, Singapore 119076 (\email{dongxiaoyu@lsec.cc.ac.cn}).}
\and
Zhenning Cai\thanks{Department of Mathematics, National University of Singapore, Singapore 119076 (\email{matcz@nus.edu.sg}).}
}

\maketitle

\begin{abstract}
We study the iterative methods for large moment systems derived from the linearized Boltzmann equation. By Fourier analysis, it is shown that the direct application of the block symmetric Gauss-Seidel (BSGS) method has slower convergence for smaller Knudsen numbers. Better convergence rates for dense flows are then achieved by coupling the BSGS method with the micro-macro decomposition, which treats the moment equations as a coupled system with a microscopic part and a macroscopic part. Since the macroscopic part contains only a small number of equations, it can be solved accurately during the iteration with a relatively small computational cost, which accelerates the overall iteration. The method is further generalized to the multiscale decomposition which splits the moment system into many subsystems with different orders of magnitude. Both one- and two-dimensional numerical tests are carried out to examine the performances of these methods. Possible issues regarding the efficiency and convergence are discussed in the conclusion.

\end{abstract}

\begin{keywords}
Linearized Boltzmann equation, block symmetric Gauss-Seidel method, micro-macro decomposition, multiscale decomposition
\end{keywords}

%%================Introduction========================

\section{Introduction} \label{sec:intro}

Rarefied gas flows occur in various natural and engineering systems, including high-altitude atmosphere, space environments, vacuum systems and micro/nano-scale devices. When the Knudsen number, defined as the ratio of the molecular mean free path of a gas to a characteristic length scale of the flow, is small, classical continuum fluid models such as Euler and Navier-Stokes equations become inaccurate \cite{Chapman1990, Cercignani1988}. One needs to solve the Boltzmann equation, a basic kinetic model for rarefied gases, to obtain quantitatively correct flow structures.

One major difficulty in solving the Boltzmann equation comes from its high dimensionality. Compared with continuum models, the Boltzmann equation has an additional velocity variable that is usually three-dimensional. It doubles the number of dimensions and adds a huge amount of computational cost. Many simulations require supercomputers or GPUs to get accurate results \cite{Dimarco2018, Jaiswal2019}.
The moment method, proposed by Grad \cite{Grad1949}, is one major approach to the dimensionality reduction. Grad's moment method adds balance laws of non-equilibrium quantities to the conservation laws of mass, momentum and energy. With a few more variables added to the system, the results can be significantly improved \cite{Struchtrup2002, Baliti2020}. Grad's moment method suffers from difficulties including the loss of hyperbolicity \cite{Muller1993} and the convergence issue \cite{Cai2020}, but in the linear regime where the flow is close to a global equilibrium, Grad's method performs well and can be applied with many moments \cite{Sarna2018, Li2023}, and the convergence towards the Boltzmann equation has been proven in \cite{Sarna2020}. With moment methods, we expect that simulations of two-dimensional problems can be completed on personal computers within hours or even minutes \cite{Torrilhon2017, Liu2023}.

Numerical methods for moment methods including 13 or 26 moments have been studied extensively in the recent two decades. Since the first few moment equations are usually conservation laws, the finite volume method has been a popular approach adopted in many works including earlier papers such as \cite{Torrilhon2006, Gu2009}. Other strategies such as finite difference schemes and discontinuous Galerkin methods have also been studied in other works such as \cite{Rana2013} and \cite{Singh2024}. These works, as well as the references therein, have well established the general framework for most moment equations, and with slight changes, these methods can also be applied to equations with more variables. For example, the finite volume method is applied to large moment systems in \cite{Cai2010, Hu2019}. Another research direction related to numerical methods of moment equations is to deal with the nonlinearity, especially for moment equations derived based on the maximum entropy method \cite{Levermore1996}. For example, in \cite{Abdelmalik2016, Woude2024}, a nonlinear discontinuous Galerkin method is applied to solve moment equations that minimize an approximate entropy function defined by polynomials; in \cite{Boccelli2024}, the authors solve an interpolative approximation of the fourth-order maximum-entropy model using the finite volume method, focusing on the estimation of wave speeds; in \cite{Schneider2022}, the authors developed a numerical scheme that preserves the realizability of moments.

To solve steady-state moment equations, one can simply apply the time-marching scheme until the change of the solution is sufficiently small. Other methods that solve steady-state equations directly have also been studied, especially for linearized moment equations. In \cite{Torrilhon2017}, the method is extended to discontinuous Galerkin and the resulting linear system is solved by the PARDISO solver. Recently, finite element methods are also applied in the numerical solvers of moment equations, including the mixed finite element method for regularized 13-moment equations in \cite{Theisen2021}, and the method is generalized to larger moment equations in \cite{Christhuraj2024}. Another efficient approach to solving moment equations with a small number of moments is based on fundamental solutions \cite{Lockerby2016, Claydon2017, Himanshi2023}, which are often applied to small moment systems due to the requirement on explicit forms of fundamental solutions.

In this work, we will also study the numerical solver of linear steady-state moment equations. Instead of the spatial discretization, we will mainly focus on the iterative solver to get fast convergence rates. We will design our method such that it has the potential to be generalized to nonlinear equations. Since the moment method can be considered as the semidiscrete Boltzmann equation with the velocity variable discretized, ideas from the other numerical methods for the steady-state Boltzmann equation can be borrowed to design our numerical scheme. One classical iterative method to solve steady-state equations is the source iteration \cite{Adams2002}, also known as the conventional iterative scheme (CIS) \cite{Su2020Fast}, which has a slow convergence when the collision operator gets stiff in the case of small Knudsen numbers. By von Neumann analysis, it can be shown that the error grows in the form of $[1-O(\epsilon^2)]^n$ with $\epsilon$ being the Knudsen number and $n$ being the number of iterations. A recent improvement of the CIS is the general synthetic iterative scheme (GSIS) \cite{Su2020Can, Su2020Fast}, which is shown to have a uniform lower bound of the amplification factor for all Knudsen numbers $\epsilon$. This method is based on the micro-macro decomposition of distribution functions, and has been applied to many other cases \cite{Zhu2021, Zeng2023}. 

In our work, we will put forward a family of new iterative solvers based on the block symmetric Gauss-Seidel (BSGS) method. We will first show that the straightforward application of the first-order BSGS method has the convergence rate $[1-O(\epsilon)]^n$, which is better than the CIS. To improve its performance in dense regimes, we adopt an idea similar to GSIS by coupling the BSGS method with a micro-macro decomposition of the moments. Our scheme is further generalized to second-order methods and multiscale decomposition of the moments, which results in a family of iterative schemes for the steady-state moment equations. In the rest part of this paper, we will first introduce the moment equations and its spatial discretization in Section  \ref{sec:upwind_FVM_method}, and then study the BSGS method and its convergence in Section \ref{sec:BSGS_method}. Section \ref{sec:BSGS_MM_method} introduces the micro-macro decomposition to the iterative method to improve the convergence rate at small Knudsen numbers, and it is further generalized by replacing the micro-macro decomposition with the multiscale decomposition. Numerical tests are carried out in Section \ref{sec:example} to show the performances of these methods under different parameter settings. Finally, some concluding remarks are given in Section \ref{sec:conclu}.

%%================UpwindScheme========================

\section{Upwind finite volume method for the linearized Boltzmann equation} \label{sec:upwind_FVM_method}

In this paper, we consider the general linearized steady-state Boltzmann equation
\begin{equation} \label{eq:Boltz_eqn}
\bs{v} \cdot \nabla_{\bs{x}} f (\bs{x}, \bs{v}) = \frac{1}{\epsilon} \mathcal{L}[f](\bs{x}, \bs{v}), \quad \bs{x}, \bs{v} \in \mathbb{R}^d
\end{equation}
where $f(\bs{x}, \bs{v})$ is the distribution function of position $\bs{x} = (x_1,\ldots,x_d)^T$ and molecular velocity $\bs{v} = (v_1, \ldots, v_d)^T$. $\epsilon$ denotes Knudsen number and the operator $\mathcal{L}$ represents the linearized Boltzmann collision operator.

\subsection{Linearized Boltzmann equation and moment equations}

The distribution function $f$ in \eqref{eq:Boltz_eqn} is discretized by the spectral method, which is based on the expansion of $f$ into an infinite series: 
\begin{equation} \label{eq:inf_series}
f(\bs{x}, \bs{v}) = \sum_{n = 0}^{\infty} u^n (\bs{x}) \varphi_n(\bs{v}) \omega(\bs{v})
\end{equation}
where $\omega(\cdot)$ is the weight function, and $\varphi_n$ stands for the basis functions satisfying
\begin{equation*} 
\int \varphi^*_n(\bs{v}) \varphi_{n'}(\bs{v}) \omega(\bs{v}) \mathrm{d} \bs{v}
\left \{
\begin{array}{ll}
=0 & \text{if } n \neq n', \\
> 0 & \text{if } n = n'.
\end{array}
\right.
\end{equation*}
When the basis functions $\varphi_n(\cdot)$ are chosen as polynomials, the coefficients $u^n$ denote the moments of the distributions function $f$. The moment equations of $u^n$ can be derived as \cite{Grad1949}
\begin{equation} \label{eq:moment_eqn}
\sum_{i=1}^d \bs{A}_i \frac{\partial \bs{u}}{\partial x_i} = \frac{1}{\epsilon} \bs{L} \bs{u}
\end{equation}
by truncating the infinite series \eqref{eq:inf_series} to $N+1$ terms and choosing the test functions to be $\varphi_n$ with $n = 0,1,\dots,N$. Here the unknown vector $\bs{u}$ is
\begin{equation*}
\bs{u} = \left( u^0, u^1, \ldots, u^N \right)^T.
\end{equation*}
The coefficient matrices $\bs{A}_i$, whose elements are expressed as
\begin{equation*}
    \bs{A}_{i,mn} = \frac{\int v_i \varphi^*_m(\bs{v}) \varphi_{n}(\bs{v}) \omega(\bs{v}) \mathrm{d} \bs{v}}{\int |\varphi_{m}(\bs{v})|^2 \omega(\bs{v}) \mathrm{d} \bs{v}}, \quad m,n=0,1,\ldots,N
\end{equation*}
have real eigenvalues, and $\bs{L}$ is Hermitian negative semidefinite  due to the self-adjointness of the linear collision operator $\mathcal{L}$. When $N$ tends to infinity, the equations \eqref{eq:moment_eqn} are expected to converge to the linearized Boltzmann equation \eqref{eq:Boltz_eqn}.

\begin{remark}
In most moment methods, the weight function $\omega(\bs{v})$ is chosen as $C \exp(-|\bs{v}|^2/2)$ which is the Maxwellian, where $C$ is the normalizing constant. Then, by selecting different basis functions $\varphi_n$, the system \eqref{eq:moment_eqn} will correspond to different moment equations. Examples include Grad's moment equations \cite{Grad1949} and regularized moment equations \cite{Struchtrup2003}.
\end{remark}

\subsection{Spatial discretization} \label{sec:upwind_FVM}

As mentioned in the introduction, the finite volume method is a popular and classical approach to solving moment equations. Here we also adopt the finite volume method in our spatial discretization. In particular, the upwind method with linear reconstruction is applied to discretize the advection term. Such a method is commonly used in computational fluid dynamics, and we will detail the one-dimensional cases in the following part to facilitate our future discussion.

We assume that $\bs{u}$ is homogeneous in $x_2,\ldots, x_d$, and thus \eqref{eq:moment_eqn} can be simplified to
\begin{equation} \label{eq:1D_prob}
\bs{A} \frac{\mathrm{d} \bs{u}}{\mathrm{d} x} = \frac{1}{\epsilon} \bs{L} \bs{u},
\end{equation}
where $x$ and $\bs{A}$ refer to $x_1$ and $\bs{A}_1$, respectively.
For simplicity, we assume that the spatial grid is uniform, and the cell size is $\Delta x$. Thus, using $\bar{\bs{u}}_j$ to represent average $\bs{u}(x)$ on the $j$th grid cell, the upwind method can be formulated as
\begin{equation} \label{eq:upwind_FVM}
\bs{A}^+ (\bs{u}_{j+1/2}^- - \bs{u}_{j-1/2}^-) + \bs{A}^-(\bs{u}_{j+1/2}^+ - \bs{u}_{j-1/2}^+) 
= \frac{1}{\epsilon} \Delta x \bs{L} \bar{\bs{u}}_j,
\end{equation}
where $\bs{A}^{\pm}$ can be obtained by diagonalization of $\bs{A}$:
\begin{equation*}
\bs{A} = \bs{R} \bs{D} \bs{R}^{-1}, \qquad 
|\bs{A}| = \bs{R} |\bs{D}| \bs{R}^{-1}, \qquad \bs{A}^{\pm} = \frac{1}{2} (\bs{A} \pm |\bs{A}|)
\end{equation*}
with $\bs{D} = \diag(\lambda_0, \lambda_1, \ldots, \lambda_N)$ and $|\bs{D}| = \diag(|\lambda_0|, |\lambda_1|, \ldots, |\lambda_N|)$, and $\bs{u}_{j+1/2}^{\pm}$ are values of the numerical solution on the cell boundaries obtained by linear reconstruction. For first-order schemes,  $\bs{u}_{j+1/2}^- = \bar{\bs{u}}_j$ and $\bs{u}_{j+1/2}^+ = \bar{\bs{u}}_{j+1}$, so that the first-order upwind scheme turns out to be
\begin{equation} \label{eq:1D_first}
- \bs{A}^+ \bar{\bs{u}}_{j-1}
+ \left( |\bs{A}| - \frac{1}{\epsilon} \Delta x \bs{L} \right) \bar{\bs{u}}_j 
+ \bs{A}^- \bar{\bs{u}}_{j+1}
=\bs{0}. 
\end{equation}
To obtain second-order schemes, we apply linear reconstructions without limiters:
\begin{equation*} 
\bs{u}_{j+1/2}^- = \bar{\bs{u}}_j + \frac{\bar{\bs{u}}_{j+1} - \bar{\bs{u}}_{j-1}}{4}, \qquad
\bs{u}_{j+1/2}^+ = \bar{\bs{u}}_{j+1} + \frac{\bar{\bs{u}}_{j+2} - \bar{\bs{u}}_j}{4}, 
\end{equation*}
so that the numerical scheme reads
\begin{equation} \label{eq:1D_second}
\frac{1}{4} \bs{A}^+ \bar{\bs{u}}_{j-2}
- \left( \frac{1}{4} \bs{A} + \bs{A}^+ \right) \bar{\bs{u}}_{j-1}
+ \left( \frac{3}{4} |\bs{A}| - \frac{1}{\epsilon} \Delta x \bs{L} \right) \bar{\bs{u}}_j 
+ \left( \frac{1}{4} \bs{A} + \bs{A}^- \right) \bar{\bs{u}}_{j+1}
- \frac{1}{4} \bs{A}^- \bar{\bs{u}}_{j+2}
=\bs{0}. 
\end{equation}
For two-dimensional problems, there are similar expressions which are presented in Section $1$ of Supplementary Materials.

The spatial discretization gives rise to a large linear system to be solved numerically. As mentioned in \Cref{sec:intro}, the conventional iterative scheme has a slow convergence rate when $\epsilon$ is small. In particular, the number of iterations is expected to be proportional to $\epsilon^{-2}$ when $\epsilon$ is small. Our approach to breaking this constraint will be introduced in the next section.

%%================BSGS========================

\section{Block symmetric Gauss-Seidel method to solve the linearized Boltzmann equation} \label{sec:BSGS_method}

We take the one-dimensional problem \eqref{eq:1D_prob} as an example to introduce our solver for the discrete equations \eqref{eq:1D_first} and \eqref{eq:1D_second}. Briefly speaking, the block symmetric Gauss-Seidel (BSGS) method will be applied to solve $\bs{\bar{u}}_j$ through the upwind schemes \eqref{eq:1D_first} and \eqref{eq:1D_second}, and both first- and second-order schemes will be presented in this section. 

\subsection{Block symmetric Gauss-Seidel method for the first-order scheme} \label{sec:SGS_first} 

According to the first-order upwind scheme \eqref{eq:1D_first}, the moments $\bs{\bar{u}}_j$, $j=1,\ldots,M$ satisfy the linear equations 
\begin{equation} \label{eq:first_eqn}
\begin{pmatrix}
\bs{H}+\bs{H}_1 & \bs{A}^- & & & \\
-\bs{A}^+ & \bs{H} & \bs{A}^- & & \\
& \ddots & \ddots & \ddots & \\
& & -\bs{A}^+ & \bs{H} & \bs{A}^- \\
& & & -\bs{A}^+ & \bs{H}+\bs{H}_M
\end{pmatrix}
\begin{pmatrix}
\bs{\bar{u}}_1 \\
\bs{\bar{u}}_2 \\
\vdots \\
\bs{\bar{u}}_{M-1} \\
\bs{\bar{u}}_M 
\end{pmatrix}
= 
\begin{pmatrix}
\bs{g}_1 \\
\bs{0} \\
\vdots \\
\bs{0} \\
\bs{g}_M 
\end{pmatrix}
.
\end{equation}
where $\bs{H} = |\bs{A}| - \Delta x \bs{L} / \epsilon$, and $\bs{H}_1, \bs{H}_M, \bs{g}_1$ and  $\bs{g}_M$ are related to the boundary conditions. For bounded problems with wall boundary conditions, an additional condition specifying the total mass in the computational domain is needed to uniquely determine the solution. Let $\bar{u}_j^0$ be the first component of $\bar{\bs{u}}_j$. Then the condition can be written as
\begin{equation} \label{eq:normalization}
{\Delta x} \sum_{j=1}^M \bar{u}_j^0 = C,
\end{equation} 
where the total mass $C$ is given.
Since the operator $|v| - \Delta x \mathcal{L}/\epsilon$ is positive definite, the matrix $\bs{H}$ is invertible in most cases. This allows us to apply the block symmetric Gauss-Seidel method to \eqref{eq:first_eqn}. 
In general, the block symmetric Gauss-Seidel method is unable to maintain the equality \eqref{eq:normalization}. A normalization is then applied after each iteration to recover the property \eqref{eq:normalization}. In detail, the arbitrary initial values $\bs{\bar{u}}_j^{(0)}$, $j=1,\ldots,M$ satisfying \eqref{eq:normalization} are given firstly and for every iterative step $n$, we calculate $\bs{\bar{u}}_j^{(n+1/2)}$, $j=1,2,\ldots,M$ sequentially by
\begin{equation} \label{eq:BSGS_forward}
- \bs{A}^+ \bar{\bs{u}}_{j-1}^{(n+1/2)}
+ \left( |\bs{A}| - \frac{1}{\epsilon} \Delta x \bs{L} \right) \bar{\bs{u}}_j^{(n+1/2)} 
+ \bs{A}^- \bar{\bs{u}}_{j+1}^{(n)}
=\bs{0}
\end{equation}
where $\bar{\bs{u}}_0^{(n+1/2)}$ and $\bar{\bs{u}}_{M+1}^{(n)}$ are associated with boundary conditions, then calculate $\bs{\bar{u}}_j^{(n+1)}$, $j=1,2,\ldots,M$ in reverse order, that is to say, $j=M,M-1,\ldots,2,1$, by
\begin{equation} \label{eq:BSGS_backward}
- \bs{A}^+ \bar{\bs{u}}_{j-1}^{(n+1/2)}
+ \left( |\bs{A}| - \frac{1}{\epsilon} \Delta x \bs{L} \right) \bar{\bs{u}}_j^{(n+1)} 
+ \bs{A}^- \bar{\bs{u}}_{j+1}^{(n+1)}
=\bs{0}.
\end{equation}
Note that one cannot omit the step calculating $\bs{\bar{u}}_j^{(n+1)}$, $j=1,2,\ldots,M$ in reverse order which performs the backward Gauss-Seidel iteration since information propagates in both directions due to the hyperbolic nature of the equation. Below, this algorithm will be called the BSGS method for short. It can be shown by local Fourier analysis that when $\epsilon$ approaches zero, the convergence rate of this iterative method has a lower bound depending only on $\Delta x$, which means the method does not suffer significant slowdown in this asymptotic limit. In fact, the spectral radius of the iteration matrix can be estimated as $\lambda_0 + \epsilon \lambda_1$ with $\lambda_0 < 1$ and $\lambda_1 < 0$, meaning that the convergence does worsen when $\epsilon$ gets smaller, but the number of iterations won't blow up in this limit. Note that the dependence on $\Delta x$ is a property of the symmetric Gauss-Seidel method for general boundary value problems. The details of the analysis can be found in Appendix \ref{appen:BSGS}. 

\subsection{Block symmetric successive relaxation method for the second-order scheme} \label{sec:SGS_second}

For the second-order scheme \eqref{eq:1D_second}, a direct application of the BSGS method may cause divergence (see Figure \ref{fig:diverge} in our numerical tests). One possible reason is that the reconstruction destroys the structure similar to diagonally dominant matrices, which guarantees the convergence of the Gauss-Seidel method. We therefore tweak the original method by adding a diagonal term $\alpha|\bs{A}|(\bar{\bs{u}}_j^{(n+1/2)} - \bar{\bs{u}}_j^{(n)})$ (or $\alpha|\bs{A}|(\bar{\bs{u}}_j^{(n+1)} - \bar{\bs{u}}_j^{(n+1/2)})$ during the backward iteration), so that the iteration becomes
\begin{gather*}
\frac{1}{4} \bs{A}^+ \bar{\bs{u}}_{j-2}^{(n+1/2)}
- \left( \frac{1}{4} \bs{A} + \bs{A}^+ \right) \bar{\bs{u}}_{j-1}^{(n+1/2)}
+ \bs{S}_{\alpha} \bar{\bs{u}}_j^{(n+1/2)} 
+ \left( \frac{1}{4} \bs{A} + \bs{A}^- \right) \bar{\bs{u}}_{j+1}^{(n)}
- \frac{1}{4} \bs{A}^- \bar{\bs{u}}_{j+2}^{(n)}
=\alpha|\bs{A}| \bar{\bs{u}}_j^{(n)}, \\
\label{eq:SGS_second_2}
\frac{1}{4} \bs{A}^+ \bar{\bs{u}}_{j-2}^{(n+1/2)}
- \left( \frac{1}{4} \bs{A} + \bs{A}^+ \right) \bar{\bs{u}}_{j-1}^{(n+1/2)}
+ \bs{S}_{\alpha} \bar{\bs{u}}_j^{(n+1)} 
+ \left( \frac{1}{4} \bs{A} + \bs{A}^- \right) \bar{\bs{u}}_{j+1}^{(n+1)}
- \frac{1}{4} \bs{A}^- \bar{\bs{u}}_{j+2}^{(n+1)}
=\alpha|\bs{A}|\bar{\bs{u}}_j^{(n+1/2)}, 
\end{gather*}
where
\begin{equation*}
\bs{S}_{\alpha} =\frac{3}{4} |\bs{A}| - \frac{1}{\epsilon} \Delta x \bs{L} + \alpha|\bs{A}|.
\end{equation*}
These equations are to replace steps (2) and (3) in the algorithm introduced in Section \ref{sec:SGS_first}. In most of our test cases, we choose $\alpha = 1/4$, which is sufficient to recover the convergence of the iteration. For conciseness, we call this method the block symmetric successive relaxation (BSSR) method.

The convergence analysis of the BSSR method is carried out in Appendix \ref{appen:BSSR}, where it shows by local Fourier analysis that the method converges for all nonnegative $\alpha$. Note that our analysis based on the Fourier method works only for problems with periodic boundary conditions. In practice, when other types of boundary conditions, such as wall boundary conditions, are applied, the convergence is not guaranteed. Such cases will surface in our numerical tests in Section \ref{sec:example}. This explains why we need to choose a positive $\alpha$ to enhance the stability of our iterations.

%%================BSGS-MM========================

\section{Coupling with the micro-macro decomposition} \label{sec:BSGS_MM_method}

Recall that the analysis of the BSGS method shows us that the number of iterations does increase when the Knudsen number gets smaller. To improve the convergence rate in the case of low Knudsen numbers, we will revamp our iterative method using the micro-macro decomposition \cite{Bennoune2008}. Below we will mainly study the coupling of the first-order BSGS method and the micro-macro decomposition for the one-dimensional Boltzmann equation 
\begin{equation} \label{eq:1D_Boltz}
v \frac{\partial f}{\partial x}(x,v)  = \frac{1}{\epsilon} \mathcal{L}[f](x,v).
\end{equation}
where the kernel of the linearized collision operator is
$
\operatorname{ker} \mathcal{L} = \operatorname{span} \{ \bs{\Phi}(v) \omega(v) \}
$
where
\begin{equation} \label{eq:macro_base_phi}
\bs{\Phi}(v) = (1,v,v^2-1), \quad \omega(v) = \frac{1}{\sqrt{2\pi}} \exp \left( -\frac{v^2}{2}\right).
\end{equation}
The idea can be naturally generalized to the multi-dimensional case and the BSSR method.

\subsection{BSGS method with micro-macro decomposition} \label{sec:BSGS_MM}
To implement the micro-macro decomposition, we set the weight function $\omega(v)$ to be the Maxwellian \eqref{eq:macro_base_phi} and choose basis functions such that $\big(\varphi_0(v), \varphi_1(v), \varphi_2(v) \big) = \bs{\Phi}(v)$. Thus, the coefficients $\bs{u}$ can be split into equilibrium and non-equilibrium variables:
\begin{displaymath}
\bs{u}_*^1 = (u^0, u^1, u^2)^T, \qquad \bs{u}_*^2 = (u^3, \cdots, u^N)^T.
\end{displaymath}
Thus, the linear system \eqref{eq:1D_prob} can be written as
\begin{equation} \label{eq:micro_macro}
\bs{A}_{11}^* \frac{\mathrm{d} \bs{u}_*^1}{\mathrm{d} x} + \bs{A}_{12}^* \frac{\mathrm{d} \bs{u}_*^2}{\mathrm{d} x} = \bs{0}, \qquad
\bs{A}_{21}^* \frac{\mathrm{d} \bs{u}_*^1}{\mathrm{d} x} + \bs{A}_{22}^* \frac{\mathrm{d} \bs{u}_*^2}{\mathrm{d} x} = \frac{1}{\epsilon} \bs{L}_{22}^* \bs{u}_*^2
\end{equation}
where $\bs{L}_{22}^*$ is an $(N-2)\times(N-2)$ matrix representing the collision term, and 
\begin{displaymath}
\bs{A}_{11}^* = \begin{pmatrix}
  0 & 1 & 0 \\ 1 & 0 & 2 \\ 0 & 1 & 0
\end{pmatrix}.
\end{displaymath}
The idea of the micro-macro decomposition is to solve the two equations in \eqref{eq:micro_macro} alternately. We first assume $\bs{u}_*^2$ is given and solve $\bs{u}_*^1$ from the first equation, and then plug the result into the second equation to solve $\bs{u}_*^2$, which completes one iteration. However, for fixed $\bs{u}_*^2$, the first equation of \eqref{eq:micro_macro} may not admit a solution since for $\bs{v} = (1,0,-1)^T$,
\begin{displaymath}
  \bs{v}^T \left( \bs{A}^*_{11} \frac{\mathrm{d} \bs{u}_*^1}{\mathrm{d} x} + \bs{A}^*_{12} \frac{\mathrm{d} \bs{u}_*^2}{\mathrm{d} x}  \right) = \bs{v}^T \bs{A}^*_{12} \frac{\mathrm{d} \bs{u}_*^2}{\mathrm{d} x},
\end{displaymath}
which may not be zero for the given $\bs{u}_*^2$. Our solution is to include a few more coefficients in the first part to guarantee the solvability of both equations. In general, we assume
\begin{equation} \label{eq:u_twopart}
\bs{u}^1 = \left( u^0, u^1, u^2, \ldots, u^{N_0} \right)^T, \qquad 
\bs{u}^2 = \left( u^{N_0+1}, \ldots, u^N \right)^T, 
\end{equation}
which satisfy the linear system
\begin{equation} \label{eq:general_MM}
\bs{A}_{11} \frac{\mathrm{d} \bs{u}^1}{\mathrm{d} x} + \bs{A}_{12} \frac{\mathrm{d} \bs{u}^2}{\mathrm{d} x}= \frac{1}{\epsilon} (\bs{L}_{11} \bs{u}^1 + \bs{L}_{12} \bs{u}^2), \quad
\bs{A}_{21} \frac{\mathrm{d} \bs{u}^1}{\mathrm{d} x} + \bs{A}_{22} \frac{\mathrm{d} \bs{u}^2}{\mathrm{d} x}= \frac{1}{\epsilon} (\bs{L}_{21} \bs{u}^1 + \bs{L}_{22} \bs{u}^2).
\end{equation}
In practice, choosing $N_0 = 3$ usually suffices to allow iterating between $\bs{u}^1$ and $\bs{u}^2$ smoothly.

In general, it is unnecessary to solve $\bs{u}^1$ and $\bs{u}^2$ exactly during the inner iteration. Since the equation of $\bs{u}^1$ contains only a small number of unknowns, below we will assume that the first equation of \eqref{eq:general_MM} is accurately solved but only one BSGS iteration is applied to the second equation. We call this approach the BSGS-MM (BSGS with micro-macro decomposition) method. To apply it to the upwind scheme \eqref{eq:upwind_FVM}, we define the following block structures for the matrices $\bs{A}^{\pm}$ and $|\bs{A}|$:
\begin{equation*}
\bs{A}^{\pm} = 
\begin{pmatrix}
\bs{A}_{11}^{\pm} & \bs{A}_{12}^{\pm } \\
\bs{A}_{21}^{\pm} & \bs{A}_{22}^{\pm }
\end{pmatrix}, \qquad 
|\bs{A}| = 
\begin{pmatrix}
|\bs{A}_{11}| & |\bs{A}_{12}| \\
|\bs{A}_{21}| & |\bs{A}_{22}|
\end{pmatrix}.
\end{equation*}
Note that here $\bs{A}_{ij}^{\pm}$ is defined as a block of $\bs{A}^{\pm}$, which may not be the positive/negative part of $\bs{A}_{ij}$ based on the eigendecomposition. The detail of the BSGS-MM method is given below: 
\begin{enumerate}[(1)]
\item Set the error tolerance $tol$ and the maximum number $N_s$ iterations. Set $n=0$ and prepare the initial values $\bs{\bar{u}}_j^{1,(0)}$ and $\bs{\bar{u}}_j^{2,(0)}$ for $j=1,\ldots,M$ satisfying \eqref{eq:normalization}.

\item Solve $\bs{\bar{u}}_j^{1,(n+1)}$ for all $j=1,2,\ldots,M$ by 
\begin{equation} \label{eq:Euler}
\begin{split}
& - \bs{A}_{11}^+ \bar{\bs{u}}_{j-1}^{1,(n+1)}
+ \left( |\bs{A}_{11}| - \frac{1}{\epsilon} \Delta x \bs{L}_{11} \right) \bar{\bs{u}}_j^{1,(n+1)}
+ \bs{A}_{11}^- \bar{\bs{u}}_{j+1}^{1,(n+1)} \\
& \hspace{8em} =
\bs{A}_{12}^+ \bar{\bs{u}}_{j-1}^{2,(n)}
- \left( |\bs{A}_{12}| - \frac{1}{\epsilon} \Delta x \bs{L}_{12} \right) \bar{\bs{u}}_j^{2,(n)} 
- \bs{A}_{12}^- \bar{\bs{u}}_{j+1}^{2,(n)}, 
\end{split}
\end{equation}
where $\bar{\bs{u}}_0^{\cdot,(\cdot)}$ and $\bar{\bs{u}}_{M+1}^{\cdot,(\cdot)}$ are based on the boundary conditions. Use the normalizing condition \eqref{eq:normalization} if necessary.
 
\item Calculate $\bs{\bar{u}}_{j}^{2,(n+1/2)}$, $j=1,2,\ldots,M$ sequentially by
\begin{equation} \label{eq:BSGS_MM_forward}
\begin{split}
& - \bs{A}_{21}^+ \bar{\bs{u}}_{j-1}^{1,(n+1)}
+ \left( |\bs{A}_{21}| - \frac{1}{\epsilon} \Delta x \bs{L}_{21} \right) \bar{\bs{u}}_j^{1,(n+1)}
+ \bs{A}_{21}^- \bar{\bs{u}}_{j+1}^{1,(n+1)} \\
& \hspace{8em} =
\bs{A}_{22}^+ \bar{\bs{u}}_{j-1}^{2,(n+1/2)}
- \left( |\bs{A}_{22}| - \frac{1}{\epsilon} \Delta x \bs{L}_{22} \right) \bar{\bs{u}}_j^{2,(n+1/2)} 
- \bs{A}_{22}^- \bar{\bs{u}}_{j+1}^{2,(n)}
\end{split}
\end{equation}
where $\bar{\bs{u}}_0^{\cdot,(\cdot)}$ and $\bar{\bs{u}}_{M+1}^{\cdot,(\cdot)}$ are based on the boundary conditions.

\item Calculate $\bs{\bar{u}}_j^{2,(n+1)}$, $j=1,2,\ldots,M$ in reverse order, that is to say, $j=M,M-1,\ldots,2,1$ by
\begin{equation} \label{eq:BSGS_MM_backward}
\begin{split}
& - \bs{A}_{21}^+ \bar{\bs{u}}_{j-1}^{1,(n+1)}
+ \left( |\bs{A}_{21}| - \frac{1}{\epsilon} \Delta x \bs{L}_{21} \right) \bar{\bs{u}}_j^{1,(n+1)}
+ \bs{A}_{21}^- \bar{\bs{u}}_{j+1}^{1,(n+1)} \\
& \hspace{8em} =
\bs{A}_{22}^+ \bar{\bs{u}}_{j-1}^{2,(n+1/2)}
- \left( |\bs{A}_{22}| - \frac{1}{\epsilon} \Delta x \bs{L}_{22} \right) \bar{\bs{u}}_j^{2,(n+1)} 
- \bs{A}_{22}^- \bar{\bs{u}}_{j+1}^{2,(n+1)}
\end{split}
\end{equation}
where $\bar{\bs{u}}_0^{\cdot,(\cdot)}$ and $\bar{\bs{u}}_{M+1}^{\cdot,(\cdot)}$ are based on the boundary conditions.

\item Compute the residual of the equations \eqref{eq:first_eqn} for $\bs{\bar{u}}_j = \bs{\bar{u}}_j^{(n+1)}$, $j=1,\ldots,M$. Stop if the norm of the residual is less than $tol$. Otherwise, check if $n>N_s$. If not, increase $n$ by $1$ and return to step (2). Otherwise, print warning messages about the failure of convergence and stop.

\end{enumerate}
In step (2), the linear system \eqref{eq:Euler} can be solved either directly or iteratively, depending on the size of the problem. When \eqref{eq:Euler} is solved iteratively, the tolerance of the residual should be slightly lower than $tol$ to ensure that the final stopping criterion in step (5) can be achieved. Extension to the second-order BSSR method is straightforward. One just needs to add the correction terms from the linear reconstruction to all the equations in steps (2)(3) and (4), and add the relaxation term to the equations in steps (3) and (4). This will be referred to as BSSR-MM method hereafter. We can also get the three-dimensional BSGS-MM or BSSR-MM method by including all the five conservative quantities as well as a few non-equilibrium variables in $\bs{u}_1$ to guarantee the solvability of the equations.

Intuitively, when $\epsilon$ is small, the conservative variables included in $\bs{u}^1$ contains major information of the distribution function, so that solving \eqref{eq:Euler} exactly already provides a good approximation of the exact solution. This can be confirmed by the Fourier analysis in Appendix \ref{appen:BSGS_MM}. However, such analysis will also show that the convergence of the BSGS-MM method is slow when $\epsilon$ is large. This issue will be tackled in the following subsection.

\subsection{Variations of the BSGS-MM method}

In this section, we consider approaches that can achieve fast convergence for both small and large values of $\epsilon$. Again, we will focus only on the improvements of the BSGS-MM method, and the extension to the BSSR-MM method is again straightforward.

\subsubsection{The Hybrid BSGS-MM method} \label{sec:coupling}

It has been shown that the original BSGS method and the BSGS-MM method converge fast in exactly opposite regimes. Therefore, it is natural to consider hybridizing the two methods to cover the entire range of $\epsilon$. Here we propose to add a $N_b$ BSGS iterations after each BSGS-MM iteration. For simplicity, we use $\mathcal{T}_{\text{BSGS}}$ and $\mathcal{T}_{\text{BSGS-MM}}$ to denote the iteration operators for the two schemes, or more specifically, the equation $\bar{\bs{u}}^{(n+1)} = \mathcal{T}_{\text{BSGS}} \bar{\bs{u}}^{(n)}$ refers to \eqref{eq:BSGS_forward} and the reverse sweeping \eqref{eq:BSGS_backward}, and $\bar{\bs{u}}^{(n+1)} = \mathcal{T}_{\text{BSGS-MM}} \bar{\bs{u}}^{(n)}$ refers to \eqref{eq:Euler}\eqref{eq:BSGS_MM_forward}\eqref{eq:BSGS_MM_backward}. Then the Hybrid BSGS-MM method can be written as
\begin{equation} \label{eq:HybridBSGSMM}
\bar{\bs{u}}^{(n+1)} = \mathcal{T}_{\text{BSGS-MM}} (\mathcal{T}_{\text{BSGS}})^{N_b} \bar{\bs{u}}^{(n)}.
\end{equation}
For large $\epsilon$, we tend to choose a larger $N_b$ to gain better convergence. Note that when $\epsilon$ is not spatially homogeneous, this scheme also allows to have different $N_b$ for different spatial grids. We will study this method in our future works.

\subsubsection{The multiscale BSGS (BSGS-MS) method} \label{sec:advanced}

In equations \eqref{eq:general_MM}, the moment equations are divided into two sets since $\bs{u}^1$  and $\bs{u}^2$ have different orders of magnitude with respect to $\epsilon$. This idea can be generalized if $\bs{u}^2$ can be further separated to different magnitudes. Analysis on the magnitude of moments has been studied in several works including \cite{Struchtrup2004, Cai2012, Struchtrup2013, Cai2023}. These analyses may allow us to split $\bs{u}$ into
\begin{equation} \label{eq:u_decomp}
\bs{u} = \begin{pmatrix} \bs{u}^1 \\ \bs{u}^2 \\ \vdots  \\ \bs{u}^K \end{pmatrix},
\end{equation}
so that $\bs{u}^1, \ldots, \bs{u}^K$ have increasing orders of magnitude with respect to $\epsilon$.
Thus, the moment equations \eqref{eq:1D_prob} can be rewritten in the following form:
\begin{equation*} 
\left\{ \begin{aligned}
&\bs{A}_{11} \frac{\mathrm{d} \bs{u}^1}{\mathrm{d} x} + \sum_{k=2}^{K}\bs{A}_{1 k} \frac{\mathrm{d} \bs{u}^k}{\mathrm{d} x}= \frac{1}{\epsilon} \left( \bs{L}_{11} \bs{u}^1 + \sum_{k=2}^{K} \bs{L}_{1 k} \bs{u}^k \right), \\
&\bs{A}_{j1} \frac{\mathrm{d} \bs{u}^1}{\mathrm{d} x} + \sum_{k=2}^{K}\bs{A}_{j k} \frac{\mathrm{d} \bs{u}^k}{\mathrm{d} x}= \frac{1}{\epsilon} \left( \bs{L}_{j1} \bs{u}^1 + \sum_{k=2}^{K} \bs{L}_{j k} \bs{u}^k \right), \quad j=2,\cdots,K.
\end{aligned} \right.
\end{equation*}
where $\bs{A}_{jk}$, $j,k = 1,\cdots,K$ are submatrices of $\bs{A}$ determined by
\begin{equation*}
\bs{A} = 
\begin{pmatrix}
\bs{A}_{11} & \bs{A}_{21} & \cdots & \bs{A}_{K 1} \\
\bs{A}_{12} & \bs{A}_{22} & \cdots & \bs{A}_{K 2} \\
\vdots & \vdots & \ddots & \vdots \\
\bs{A}_{1 K} & \bs{A}_{2 K} & \cdots & \bs{A}_{K K}
\end{pmatrix}.
\end{equation*}
Here $\bs{u}^1$ can be chosen as the same vector as in \eqref{eq:u_twopart} to guarantee the solvability of the first equation.  The BSGS-MS method solves these equations one after another, and the procedures of the algorithm are listed in Section $2$ of Supplementary Materials.

Again, when $N$ is large, the number of components in $\bs{u}^1$ is significantly less than $N$, leading to lower computational cost compared with the BSGS method. Another benefit of the BSGS-MS method is that the linear systems to be solved are smaller than both BSGS and BSGS-MM method.

\subsubsection{The Hybrid BSGS-MS method} \label{sec:hybrid_BSGS_MS}
To obtain better convergence for large $\epsilon$, we can also combine the BSGS-MS method and the direct BSGS scans, leading to the ``Hybrid BSGS-MS method''. Such a method can be denoted by
\begin{displaymath}
\bar{\bs{u}}^{(n+1)} = \mathcal{T}_{\text{BSGS-MS}} (\mathcal{T}_{\text{BSGS}})^{N_b} \bar{\bs{u}}^{(n)},
\end{displaymath}
where $\mathcal{T}_{\text{BSGS-MS}}$ is the iteration operator for the BSGS-MS method.

%%================Examples========================

\section{Numerical examples} \label{sec:example}

In this section, we are going to carry out some numerical tests to demonstrate the performances of the BSGS and BSGS-MM methods for various Knudsen numbers $\epsilon$. For the first order finite volume method \eqref{eq:1D_first}, the following five methods will be tested:
\begin{itemize}
\item The BSGS method (section \ref{sec:SGS_first}),
\item The BSGS-MM method (section \ref{sec:BSGS_MM}),
\item The Hybrid BSGS-MM method with $N_b$ BSGS steps per iteration, denoted as ``Hybrid BSGS-MM-$N_b$'' below (section \ref{sec:coupling}),
\item The BSGS-MS method (section \ref{sec:advanced})
\item The Hybrid BSGS-MS method with $N_b$ BSGS steps per iteration, denoted as ``Hybrid BSGS-MS-$N_b$'' below (section \ref{sec:hybrid_BSGS_MS}).
\end{itemize}
For the second-order scheme \eqref{eq:1D_second}, we will also test the five methods mentioned above, with ``BSGS'' replaced with ``BSSR'', referring to a relaxation to improve the convergence (see \ref{sec:SGS_second}). In all tests below, the relaxation parameter $\alpha$ is chosen as $1/4$.

\subsection{One-dimensional tests: heat transfer} \label{sec:1D_exm}

We first consider a toy model in which both the spatial and velocity variables are one-dimensional. Below we will introduce the problem settings before showing our numerical results.

\subsubsection{Problem setting}

The Boltzmann equation can thus be written as \eqref{eq:1D_Boltz}, and we study the BGK collision operator defined by
\begin{equation*}
\mathcal{L}[f](x,v) = \mathcal{M}[f](x,v) - f(x,v), \quad
\mathcal{M}[f](v) = \left[ \int_{\mathbb{R}} \bs{\Phi}(v) f(v) \,\mathrm{d}v \right] \begin{pmatrix} 1 \\ v \\ (v^2-1) / 2\end{pmatrix} \omega(v).
\end{equation*}
The basis functions $\varphi_n$ in \eqref{eq:inf_series} are chosen as Hermite polynomials
\begin{equation*}
\varphi_n(v) = H_n(v) := (-1)^{n} \exp \left( \frac{v^2}{2} \right) \frac{\mathrm{d}^n}{\mathrm{d} v^n} \exp \left( -\frac{v^2}{2} \right), \qquad \forall n = 0,1,\ldots 
\end{equation*}
As a result, the moment equations become a system of ordinary differential equations \eqref{eq:1D_prob} with
\begin{equation} \label{eq:1D_coeff}
\bs{u} =
\begin{pmatrix}
u^0 \\
u^1 \\
\vdots \\
u^N
\end{pmatrix}, 
\quad
\bs{A} =
\begin{pmatrix}
0 & 1 & & & & \\
1 & 0 & 2 & & & \\
& 1 & 0 & 3 & & \\
& & \ddots & \ddots & \ddots & \\
& & & 1 & 0 & N \\
& & & & 1 & 0 \\
\end{pmatrix},
\quad
\bs{L} =
\begin{pmatrix}
0 & & & & & \\
& 0 & & & & \\
& & 0 & & & \\
& & & -1 & & \\
& & & & \ddots & \\
& & & & & -1 \\
\end{pmatrix}.
\end{equation}
The components of $\bs{u}$ are related to the macroscopic flow quantities by
\begin{equation*}
\rho(x) = u^0(x) , \quad
U(x) = u^1(x), \quad
T(x) = 2u^2(x),
\end{equation*}
where $\rho$ is the fluid density, $U$ is the flow velocity, and $T$ is the temperature of the fluid.

For the one-dimensional heat transfer problem, we assume that the spatial domain is $(0,1)$, and the boundary conditions are given by the fully diffusive model based on wall temperatures:
\begin{equation} \label{eq:1D_bound}
f(0,v) = \left[ \rho_w^0 + \frac{T_w^0}{2} (v^2-1)\right] \omega(v), \quad v>0; \qquad
f(1,v) = \left[ \rho_w^1 + \frac{T_w^1}{2} (v^2-1)\right] \omega(v), \quad v<0.
\end{equation}
Here $T_w^0$ and $T_w^1$ are the wall temperatures at $x = 0$ and $x = 1$, respectively, and $\rho_w^0$ and $\rho_w^1$ are determined by the zero-velocity condition on the boundary:
\begin{displaymath}
\int_\mathbb{R} v f(0,v) \,\mathrm{d}v =\int_\mathbb{R} v f(1,v) \,\mathrm{d}v = 0.
\end{displaymath}
According to \cite{Grad1949}, the boundary conditions of moment equations \eqref{eq:1D_prob} are derived by taking odd moments of \eqref{eq:1D_bound}. In our implementation, the formulation of boundary conditions follows the approach in \cite{Sarna2017, Cai2023} to satisfy the $L^2$ stability, and the result has the form
\begin{equation*}
\bs{u}^{\mathrm{odd}} (0) = \bs{B} \bs{u}^{\mathrm{even}}(0) + \bs{g} T_w^{0}, \qquad
\bs{u}^{\mathrm{odd}} (1) = - \bs{B} \bs{u}^{\mathrm{even}}(1) + \bs{g} T_w^{1},
\end{equation*}
where $\bs{u}^{\mathrm{odd}} = (u^1, u^3, \ldots, u^{N_o})^T$ and $\bs{u}^{\mathrm{even}} = (u^0, u^2, \ldots, u^{N_e})^T$ with $N_o = N_e+1 = N$ if $N$ is odd and $N_o+1 = N_e = N$ if $N$ is even. The details of the matrix $\bs{B}$ and the vector $\bs{g}$ can be found in Section $3$ of Supplementary Material. Numerically, it is implemented by setting $\bs{u}_{1/2}^-$ and $\bs{u}_{M+1/2}^+$ in \eqref{eq:upwind_FVM} according to
\begin{equation} 
\label{eq:bc}
\left \{
\begin{aligned}
& \bs{u}_{1/2}^{-,\mathrm{even}} = \bs{u}_{1/2}^{+,\mathrm{even}}, \\
& \bs{u}_{1/2}^{-,\mathrm{odd}} = 2 \left( \bs{B} \bs{u}_{1/2}^{+,\mathrm{even}} + \bs{g} T_w^0 \right) - \bs{u}_{1/2}^{+,\mathrm{odd}};
\end{aligned}
\right. 
\,
\left \{
\begin{aligned}
& \bs{u}_{M+1/2}^{+,\mathrm{odd}} = \bs{u}_{M+1/2}^{-,\mathrm{odd}}, \\
& \bs{u}_{M+1/2}^{+,\mathrm{odd}} = -2 \left( \bs{B} \bs{u}_{M+1/2}^{-,\mathrm{even}} + \bs{g} T_w^1 \right) - \bs{u}_{M+1/2}^{-,\mathrm{odd}}.
\end{aligned}
\right.
\end{equation}
In the first-order scheme, the right-hand sides are given by $\bs{u}_{1/2}^+ = \bs{u}_1$ and $\bs{u}_{M+1/2}^- = \bs{u}_M$. In the second-order scheme, the linear reconstruction on boundary cells is set to be
\begin{equation*} 
\bs{u}_{1/2}^{+} = \frac{3\bar{\bs{u}}_{1} - \bar{\bs{u}}_{2}}{2}, \quad
\bs{u}_{3/2}^{-} = \frac{\bar{\bs{u}}_{1} + \bar{\bs{u}}_{2}}{2};
\qquad
\bs{u}_{M-1/2}^{+} = 
\frac{\bar{\bs{u}}_{M-1} + \bar{\bs{u}}_{M}}{2}, \quad
\bs{u}_{M+1/2}^{-} =
\frac{3\bar{\bs{u}}_{M} - \bar{\bs{u}}_{M-1}}{2},
\end{equation*}
which are also applied to the boundary conditions \eqref{eq:bc}. To uniquely determine the solution, the total mass needs to be specified:
\begin{equation*} 
\int_{0}^{1} \rho(x) \,\mathrm{d}x = 1, \quad \text{or} \quad
\int_{0}^{1} u^0(x) \,\mathrm{d}x = 1.
\end{equation*}
In all our numerical tests, we choose $T_w^0 = 0$ and $T_w^1 = 1$.

\subsubsection{Numerical results}
To verify the convergence of our method, we select $N=5$ for which the exact solution $\bs{u}^{\mathrm{exact}}(x)$ of \eqref{eq:1D_prob} can be obtained analytically. The iteration is terminated if the residual has a norm less than $10^{-10}$. The density and temperature results for $M = 40$ are given in Figure \ref{fig:N5_solution}, from which one sees that both the first-order scheme \eqref{eq:1D_first} and the second-order scheme \eqref{eq:1D_second} provide results that are indistinguishable from the exact solution. We therefore test the order of convergence by checking the following $L^2$ error of the solution:
\begin{equation*}
    err = \sqrt{\frac{1}{M} \sum_{j=1}^M \left\| \bar{\bs{u}}_j - \frac{1}{\Delta x} \int_{x_j-\Delta x/2}^{x_j+\Delta x/2} \bs{u}^{\mathrm{exact}}(x) \,\mathrm{d}x \right\|_2^2 }.
\end{equation*}
The tests are carried out for $\epsilon = 10^{-k}$  with $k = 0,1,2,3$ and $M = 2^j \cdot 80$ with $j = 0,1,\cdots,5$, and the errors of both schemes are displayed in Figure \ref{fig:1D_order}. It is clear that the desired order of convergence can be achieved in all cases, and the second-order scheme has a significantly lower numerical error. When $\epsilon$ is smaller, the $L^2$ error is larger, which is possibly due to the larger total variance in the exact solution as indicated in Figure \ref{fig:N5_solution}.

\begin{figure}[!ht] \label{fig:N5_solution}
    \centering
    \subfigure[Density]{
    \includegraphics[width=0.4\textwidth, trim=35 20 55 35, clip]{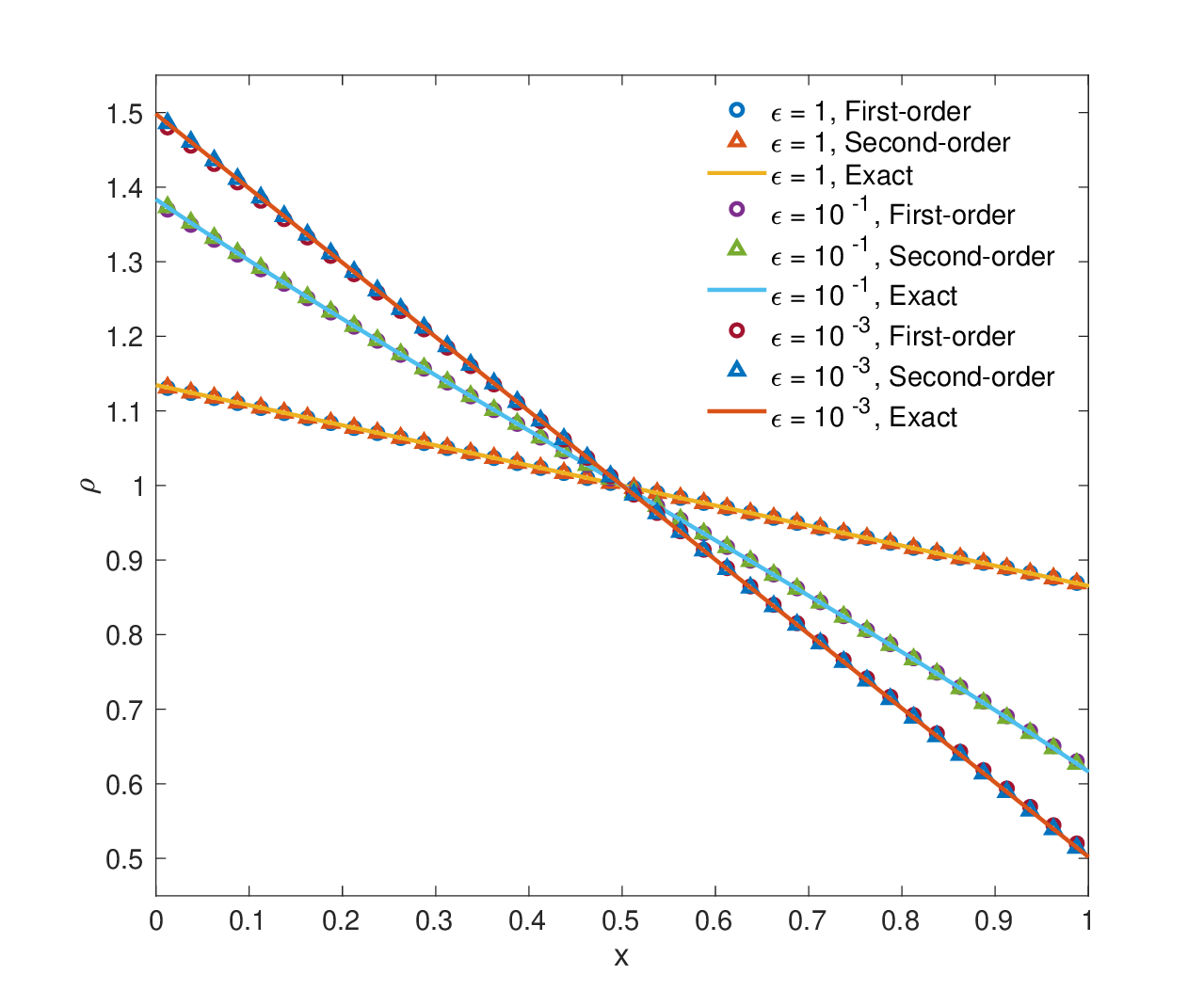}
    } \quad
    \subfigure[Temperature]{
    \includegraphics[width=0.4\textwidth, trim=35 20 55 35, clip]{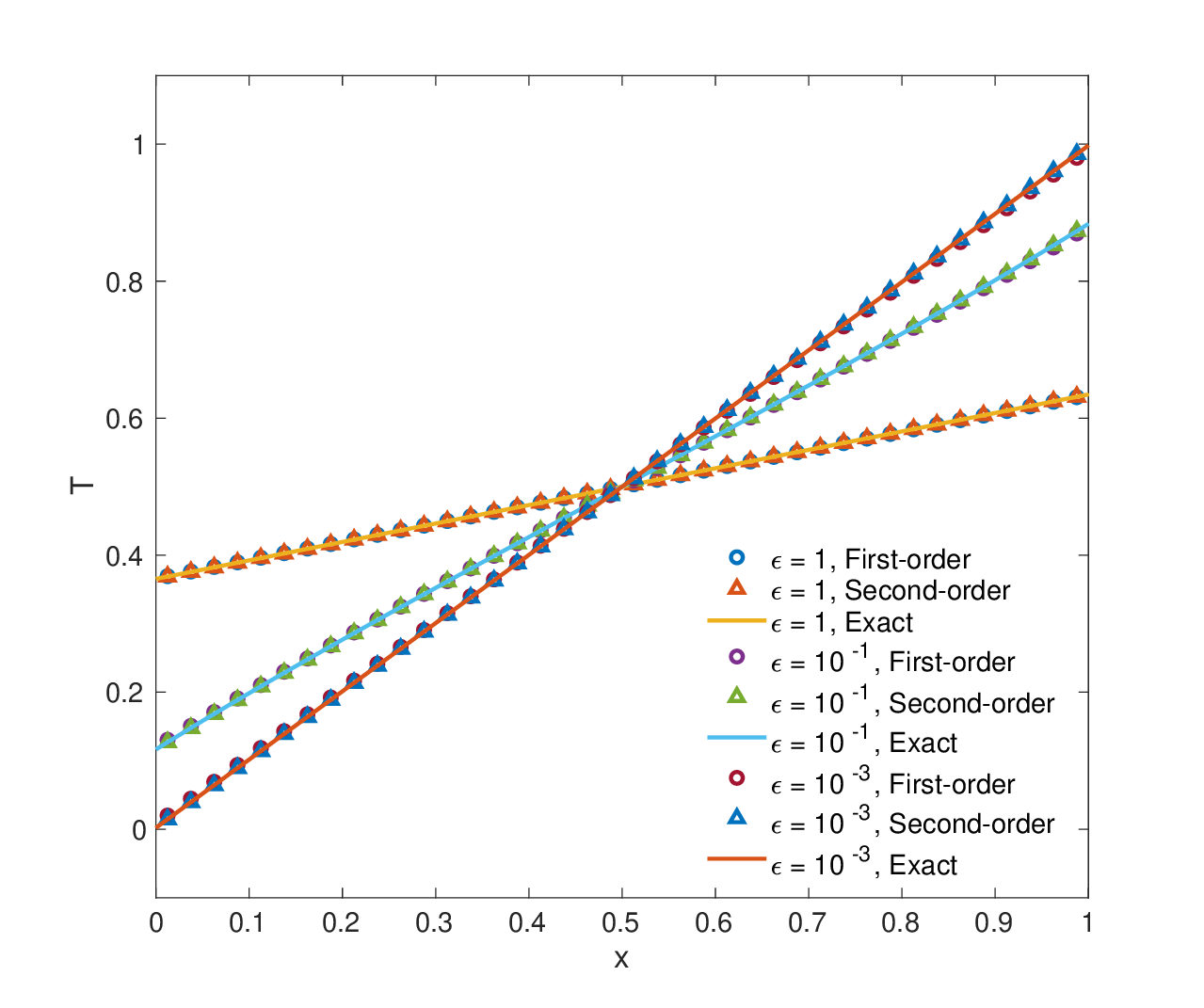}
    }
    \caption{The distributions of density and temperature with $N=5$.}
\end{figure}

\begin{figure}[!ht] \label{fig:1D_order}
    \centering
    \subfigure[First-order scheme]{
    \includegraphics[width=0.4\textwidth, trim=5 0 38 15, clip]{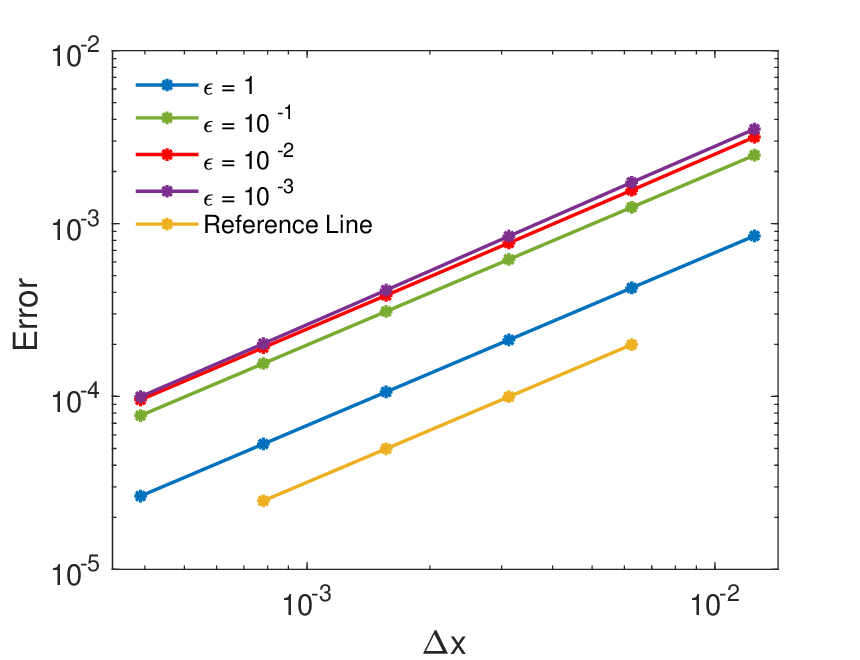}
    } \quad
    \subfigure[Second-order scheme]{
    \includegraphics[width=0.42\textwidth, trim=2 0 38 20, clip]{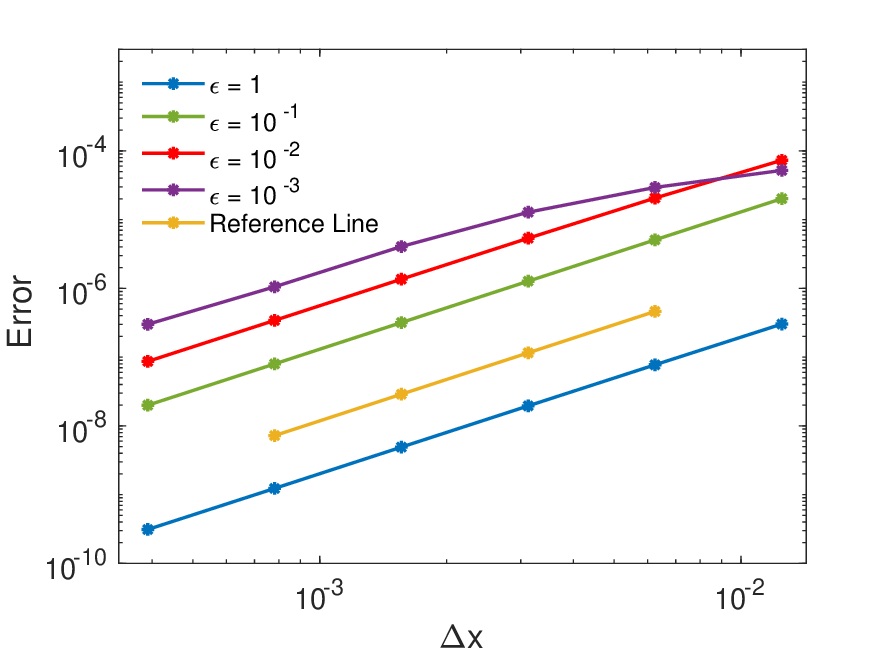}
    }
    \caption{The errors of upwind finite volume schemes for various $\epsilon$.}
\end{figure}

We now focus on the efficiency of different numerical methods. A larger moment system with $N = 16$, discretized on a uniform grid with $\Delta x = 5 \times 10^{-3}$, is considered in all the simulations below.  The distributions of density $\rho(x)$ and temperature $T(x)$ calculated by second-order scheme \eqref{eq:1D_first} are performed in figure \ref{fig:1D_macro_quan}.
\begin{figure}[!ht] \label{fig:1D_macro_quan}
    \centering
    \subfigure[Density]{
    \includegraphics[width=0.4\textwidth, trim=15 0 40 22, clip]{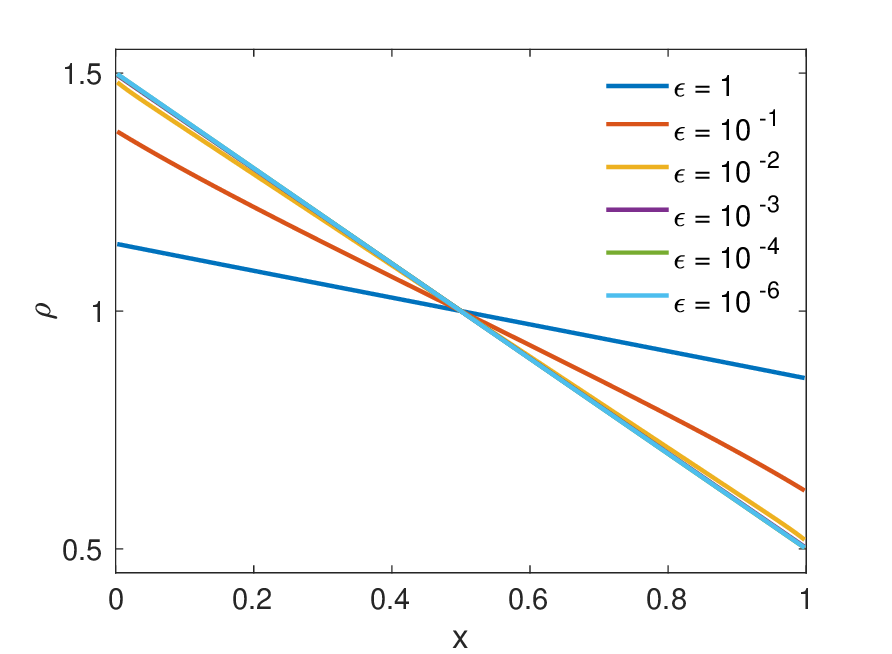}
    } \quad
    \subfigure[Temperature]{
    \includegraphics[width=0.41\textwidth, trim=10 0 40 22, clip]{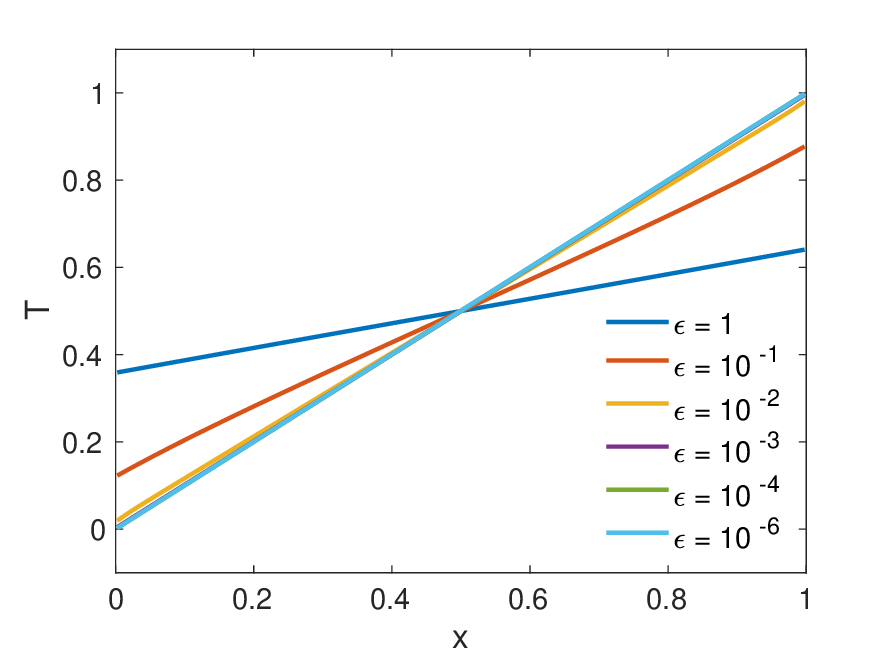}
    }
    \caption{The distributions of density and temperature with $N=16$.}
\end{figure}
We can see that the distributions of density and temperature almost  coincide for $\epsilon = 10^{-4}$ and $10^{-6}$. Totally, when $\epsilon$ is less, they are closer and the values near the boundary are also closer to the boundary values, which means when $\epsilon$ is small, there exists the boundary layer in the system. Also, the numerical results verifies the correctness of our algorithm.

Beginning with first-order method, we show in Figure \ref{fig:BSGS_BSGSMM_rate} how the residual decreases in the BSGS method and the BSGS-MM methods where $\bs{u}^1 = (u^0, u^1, u^2, u^3)$ in \eqref{eq:u_twopart}. For the BSGS method (Figures \ref{fig:BSGS_rate} and \ref{fig:BSGS_rate_magnified}), the trend of the curves agrees with the general behavior of the symmetric Gauss-Seidel method: the residual decreases quickly in the first few steps due to the fast elimination of high-frequency errors, and then the convergence slows down and the rate is governed by the lowest-frequency part of the error. More iterations are needed for smaller $\epsilon$, but it can be expected that the convergence rate has a lower bound,  as predicted in Appendix \ref{appen:BSGS}. On the contrary, the BSGS-MM method (Figure \ref{fig:BSGSMM_rate}) converges faster as $\epsilon$ decreases, which is in line with our theoretical results in Appendix \ref{appen:BSGS_MM}. When $\epsilon \leqslant 10^{-4}$, only one iteration is needed to meet our stopping criterion, since $\bs{u}^2$ in \eqref{eq:u_twopart} actually has magnitude $O(\epsilon^2)$ and has little impact on the error of the solution.

\begin{figure}[!ht] \label{fig:BSGS_BSGSMM_rate}
    \centering
    \subfigure[BSGS method]{
    \label{fig:BSGS_rate}
    \includegraphics[width=0.415\textwidth, trim=5 0 25 22, clip]{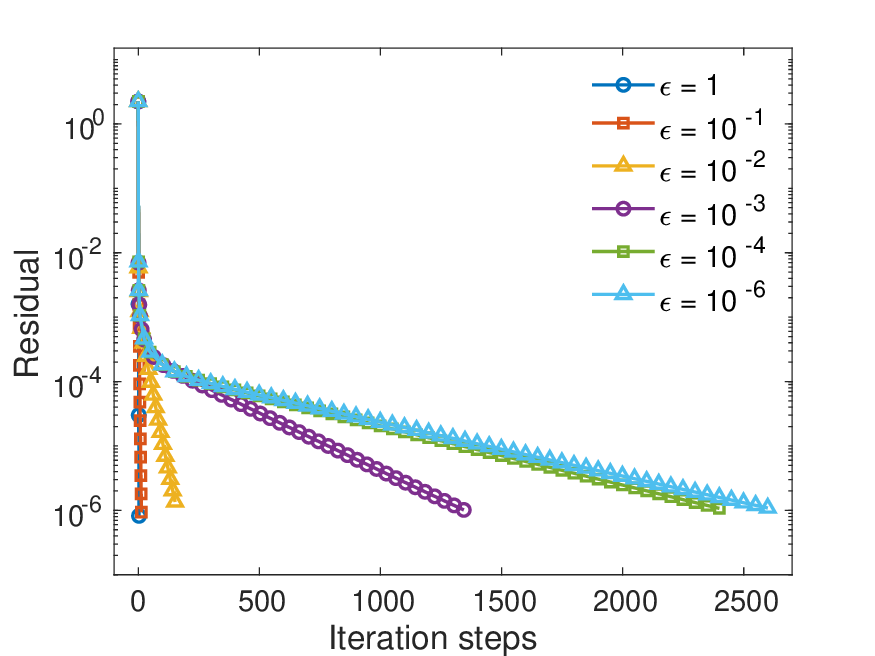}
    } \quad
    \subfigure[First 50 steps of BSGS]{
    \label{fig:BSGS_rate_magnified}
    \includegraphics[width=0.4\textwidth, trim=5 0 40 22, clip]{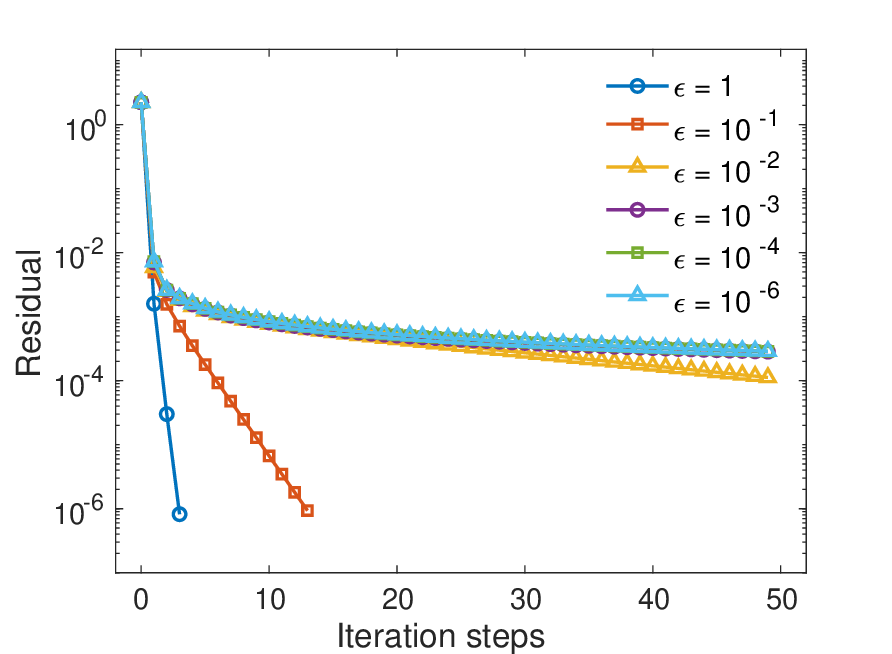}
    } \\
    \subfigure[BSGS-MM method]{
    \label{fig:BSGSMM_rate}
    \includegraphics[width=0.4\textwidth, trim=5 0 25 22, clip]{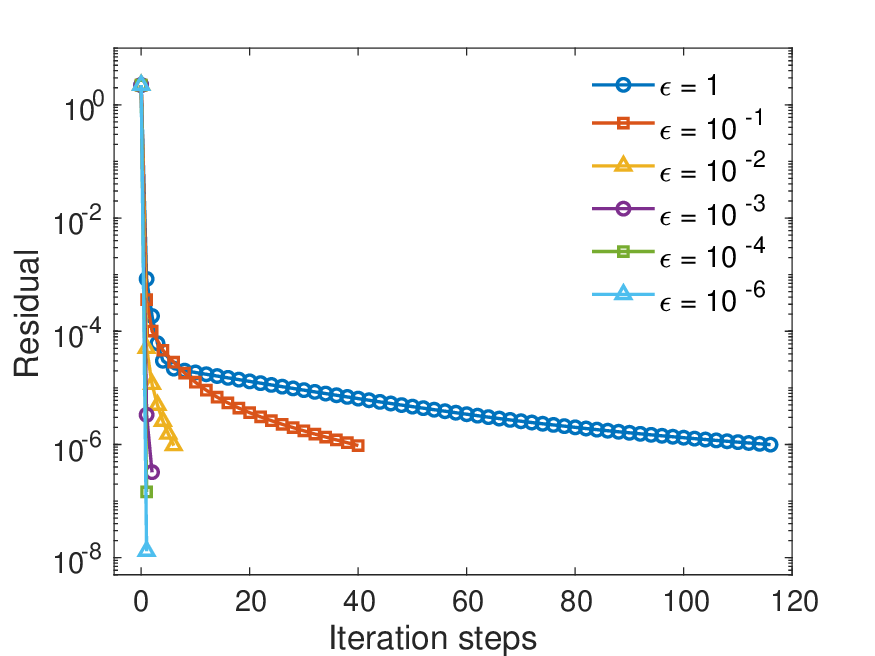}
    } \quad
    \subfigure[First 20 steps of BSGS-MM]{
    \label{fig:BSGSMM_rate_magnified}
    \includegraphics[width=0.4\textwidth, trim=5 0 40 22, clip]{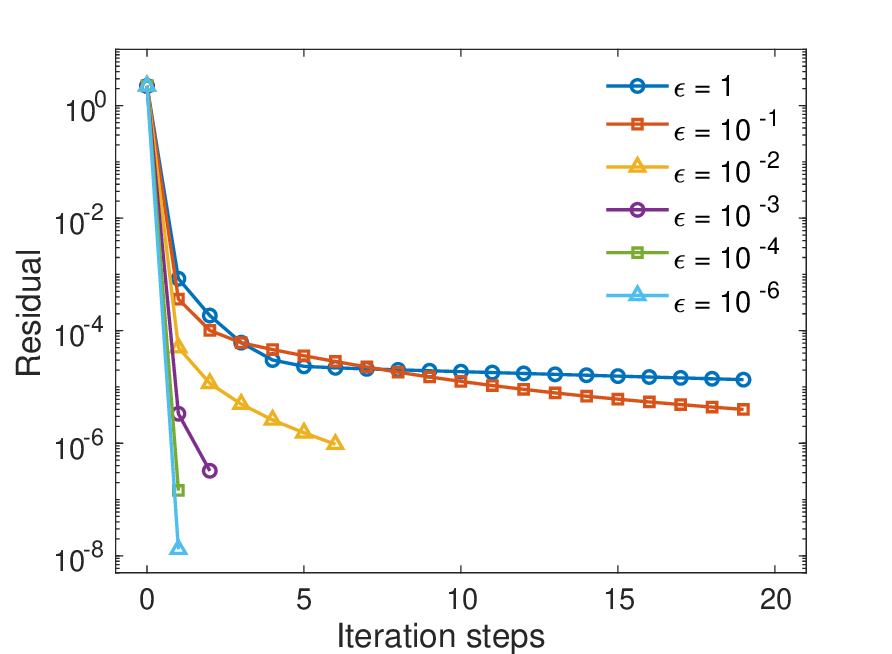}
    }
    \caption{The convergence rate of BSGS and BSGS-MM methods for one-dimensional heat transfer.}
\end{figure}

Here we will take a closer look at the BSGS-MM method. Note that this approach requires solving the equation of $\bs{u}^1$ \eqref{eq:Euler} in every iteration. Despite the small number of components in $\bs{u}^1$, solving such a system, which is essentially the Navier-Stokes equations, could be nontrivial, especially in the multi-dimensional case. Although in the one-dimensional case, the equation can be solved efficiently by the Thomas algorithm, to better understand the computational cost in more general cases, we solve $\bs{u}^1$ also by the BSGS iteration and the numbers of such inner iterations are given in Figure \ref{fig:BSGS_MM_inner}, where the horizontal axis denotes the outer iterations and the vertical axis denotes the number of inner iterations. In \eqref{eq:u_decomp}, we use $\bs{u}^1 = (u^0, u^1, u^2, u^3)$ and $\bs{u}^k = u^{k-2}$, $k=2,\ldots,N-2$. Note that the BSGS-MS method does not converge for $\epsilon = 1$, $10^{-1}$ and $10^{-2}$, and the BSGS-MM method also requires many outer iterations compared with other methods, which shows the importance of hybridizing with the BSGS method to bring down the high-frequency errors, and the figures show that the number of iterations is significantly reduced with $N_b = 1$ only. In general, the number of inner iterations increases as $\epsilon$ decreases. In the one-dimensional case, this indeed causes a rise of the computational cost, since $\bs{u}^1$ occupies about a quarter of the total number of variables, which is non-negligible. Improvements can be made by adopting a better numerical solver for equations of $\bs{u}^1$, which is left to our future works.

\begin{figure}[!ht] \label{fig:BSGS_MM_inner}
    \centering
    \subfigure[$\epsilon = 1$]{
    \includegraphics[width=0.4\textwidth, trim=10 0 28 18, clip]{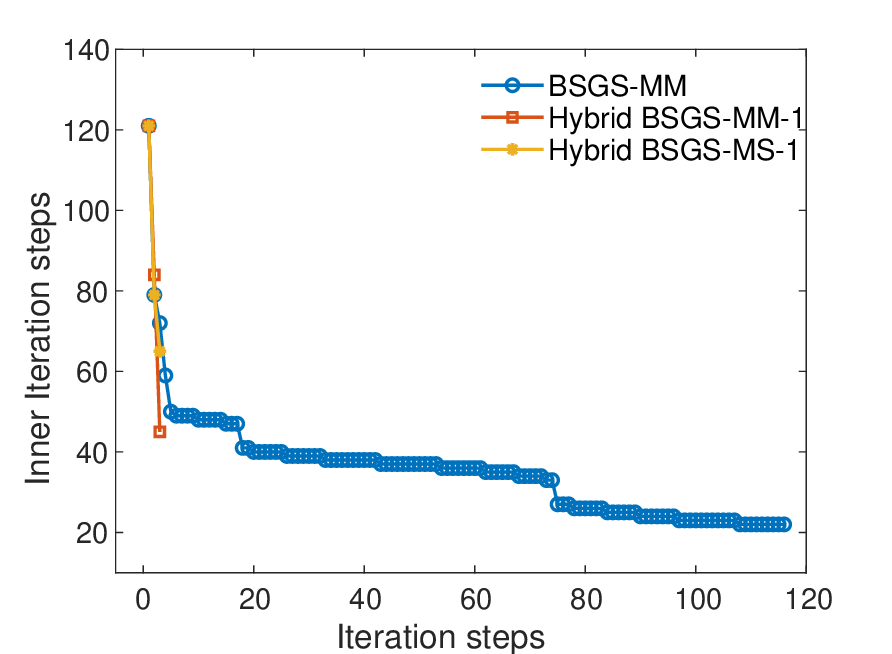}
    } \quad
    \subfigure[$\epsilon = 10^{-1}$]{
    \includegraphics[width=0.4\textwidth, trim=10 0 28 18, clip]{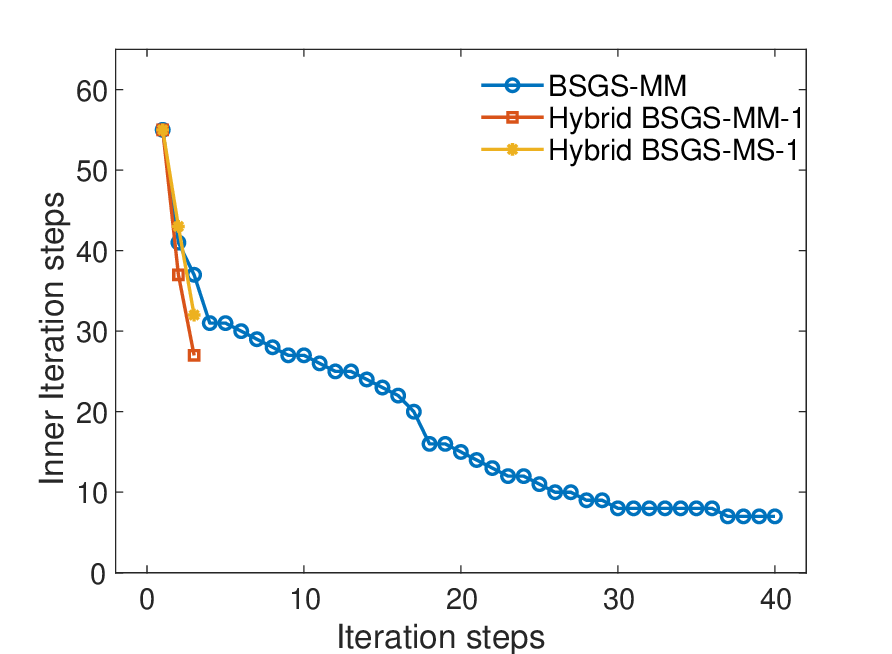}
    } \\
    \subfigure[$\epsilon = 10^{-2}$]{
    \includegraphics[width=0.4\textwidth, trim=10 0 28 18, clip]{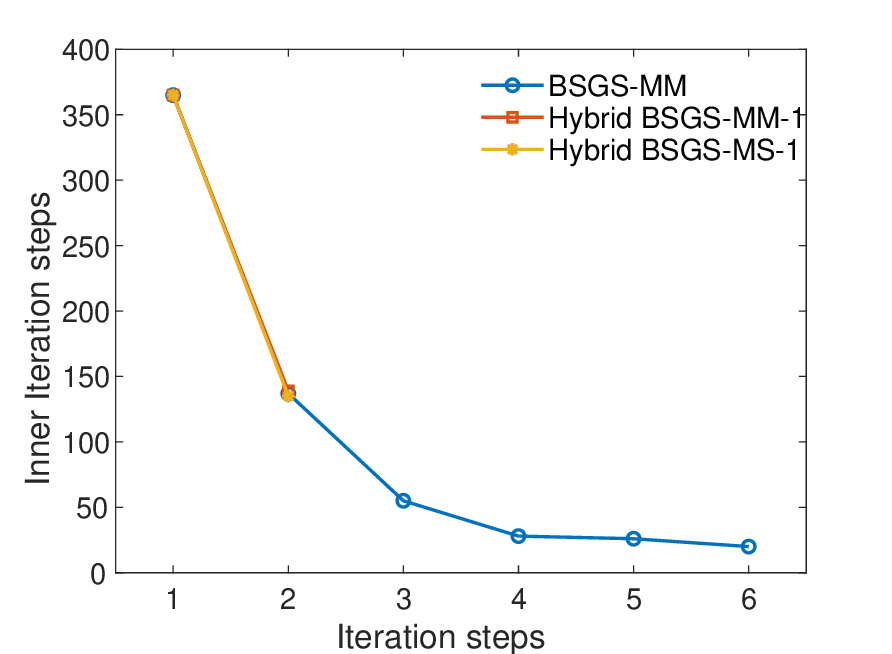}
    } \quad
    \subfigure[$\epsilon = 10^{-3}$]{
    \includegraphics[width=0.405\textwidth, trim=5 0 28 18, clip]{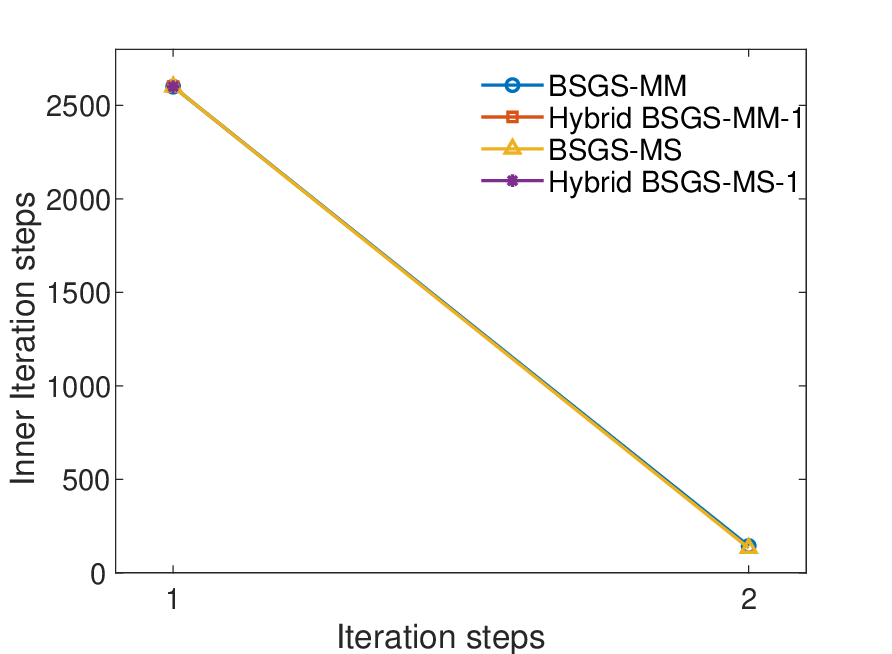} 
    } 
    \caption{The inner iterations of (Hybrid) BSGS-MM and BSGS-MS methods for one-dimensional heat transfer.}
\end{figure}

Now we study second-order methods. We will first show that a direct application of the symmetric Gauss-Seidel method without relaxation may lead to divergent results. To this end, we apply the BSGS method and the BSSR method to the second-order scheme \eqref{eq:1D_second}, and the results are given in Figure \ref{fig:diverge} for $\epsilon = 1$ and $10^{-4}$. The BSGS method without relaxation quickly loses stability, and the BSSR method shows linear convergence for $\epsilon = 1$, but converges slowly for $\epsilon = 10^{-4}$, which is consistent with the first-order results. More results on the BSSR and BSSR-MM methods are plotted in Figure \ref{fig:BSSR_rate_1} and \ref{fig:BSSR_rate_2}. Compared with Figure \ref{fig:BSGS_BSGSMM_rate}, the general behaviors of both methods are similar, but the iterative methods for second-order methods converge more slowly for the same grid size due to the more complicated structures of the matrices. The numbers of inner iterations to solve $\bs{u}^1$ for various methods are displayed in Figure \ref{fig:BSSR_MM_inner}. One can again see that inside each outer iteration, more inner iterations are needed for smaller $\epsilon$. The BSSR-MS method again diverges for $\epsilon = 1$, $10^{-1}$ and $10^{-2}$, and even the Hybrid BSSR-MS-1 method does not converge for in these three cases. We therefore increase the value of $N_b$ in our experiments. For larger $\epsilon$, a larger value of $N_b$ is needed, and the number of inner iterations can also be found in Figure \ref{fig:BSSR_MM_inner}.

\begin{figure}[!ht] 
    \centering
    \subfigure[Divergence of BSGS method]{
    \includegraphics[width=0.295\textwidth, trim=5 0 32 15, clip]{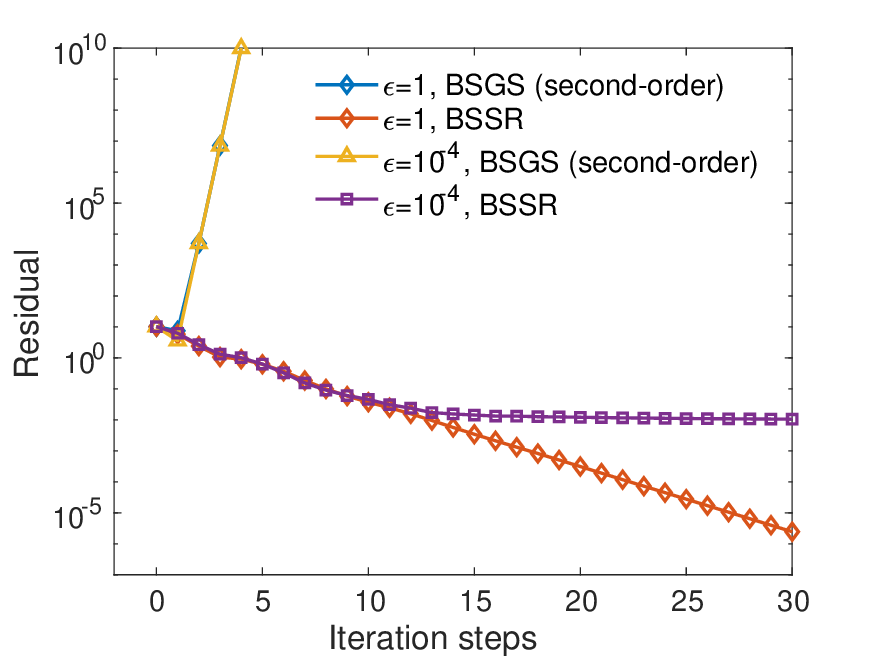} \label{fig:diverge}
    } 
    \subfigure[BSSR method]{ \label{fig:BSSR_rate_1}
    \includegraphics[width=0.30\textwidth, trim=5 0 38 22, clip]{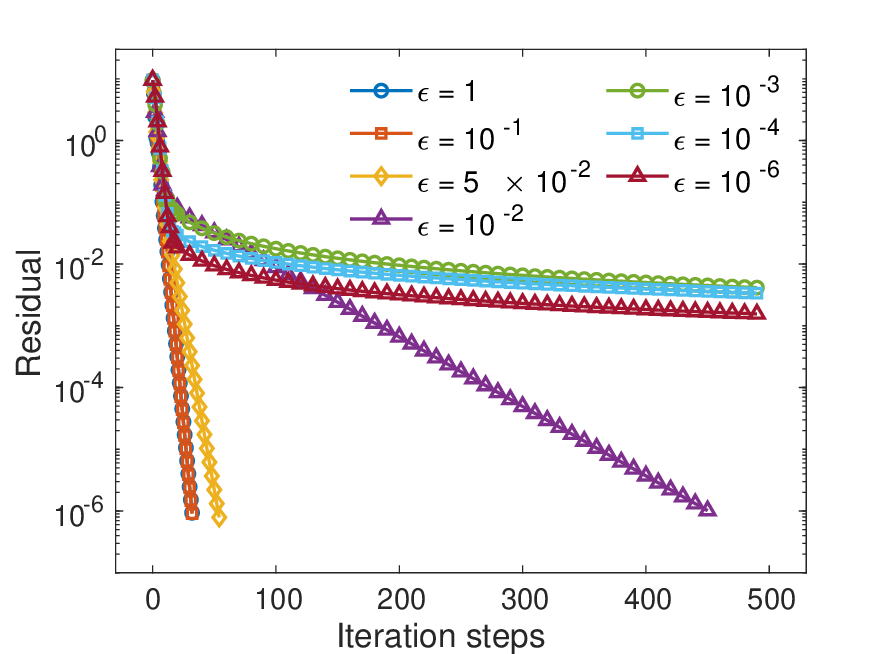} 
    } 
    \subfigure[BSSR-MM method]{ \label{fig:BSSR_rate_2}
    \includegraphics[width=0.30\textwidth, trim=5 0 38 22, clip]{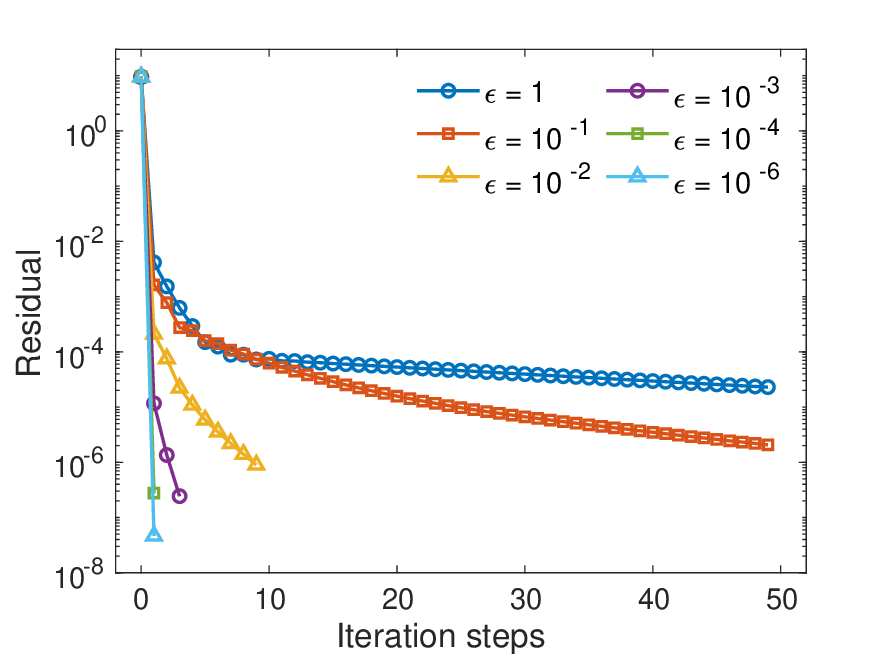}
    }
    \caption{The convergence of iterative methods for the second-order scheme for one-dimensional heat transfer.}
\end{figure}

\begin{figure}[!ht] \label{fig:BSSR_MM_inner}
    \centering
    \subfigure[$\epsilon = 1$]{
    \includegraphics[width=0.4\textwidth, trim=10 0 28 18, clip]{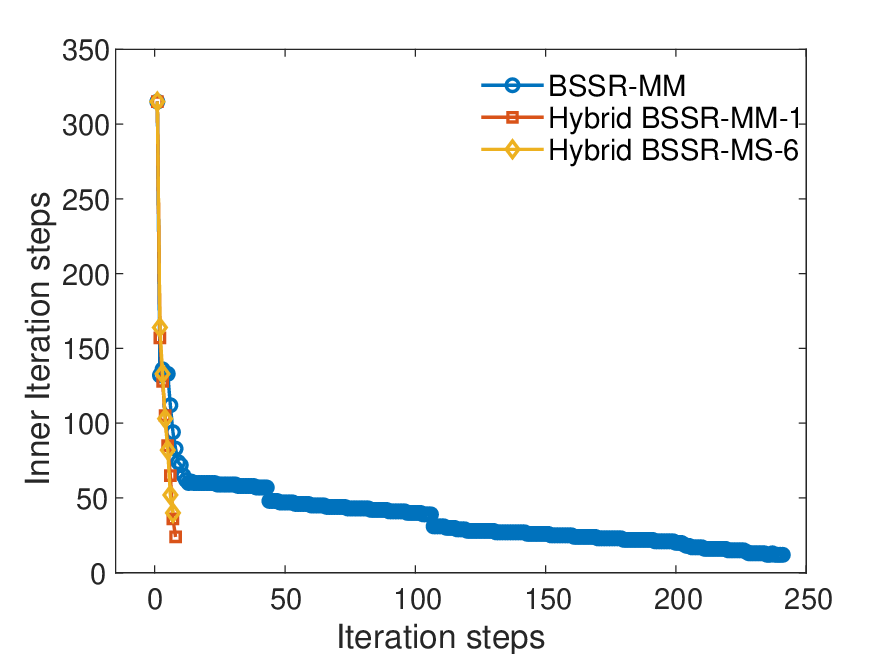}
    } \quad
    \subfigure[$\epsilon = 10^{-1}$]{
    \includegraphics[width=0.4\textwidth, trim=10 0 28 18, clip]{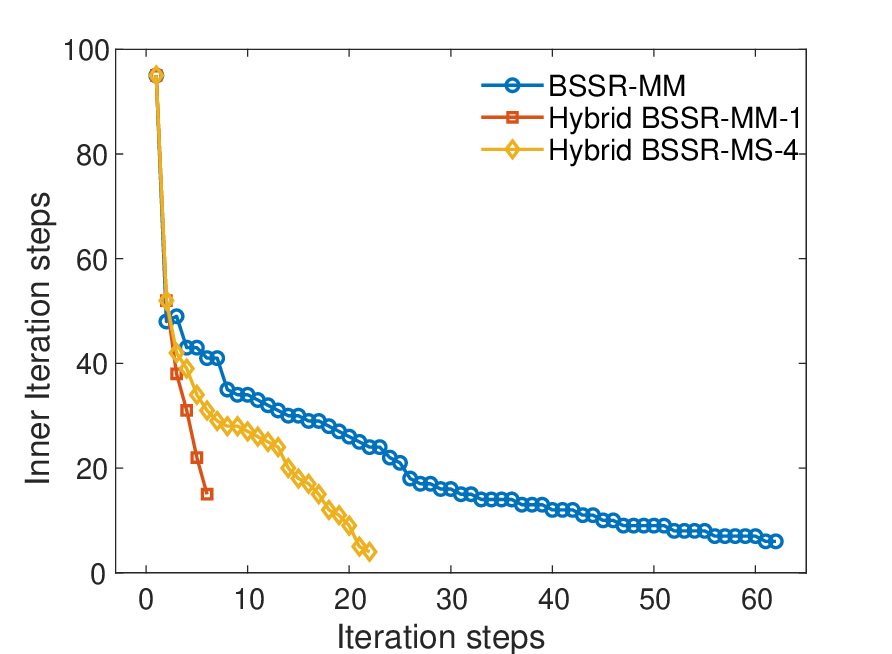}
    } \\
    \subfigure[$\epsilon = 10^{-2}$]{
    \includegraphics[width=0.4\textwidth, trim=10 0 28 18, clip]{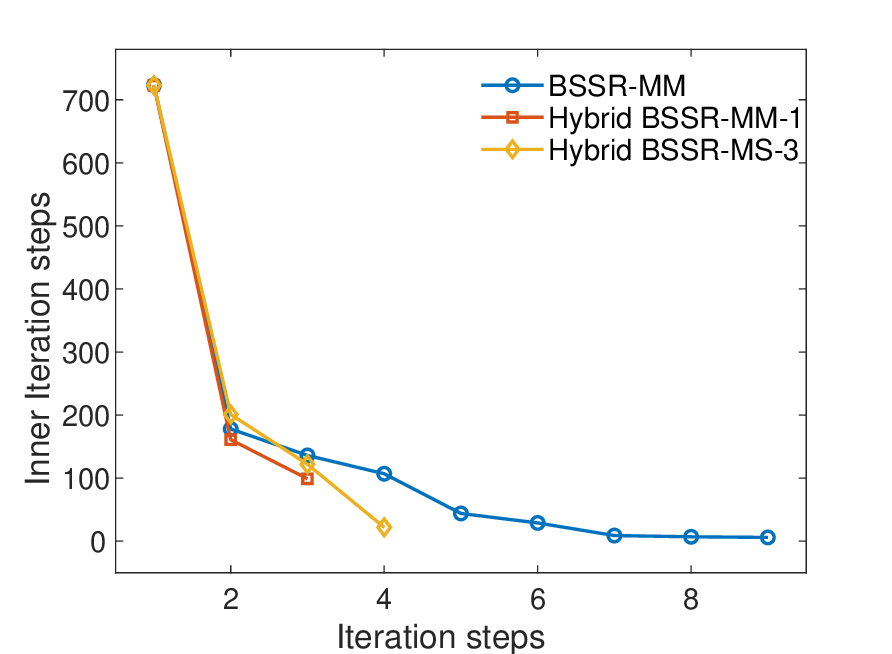}
    } \quad
    \subfigure[$\epsilon = 10^{-3}$]{
    \includegraphics[width=0.405\textwidth, trim=5 0 28 18, clip]{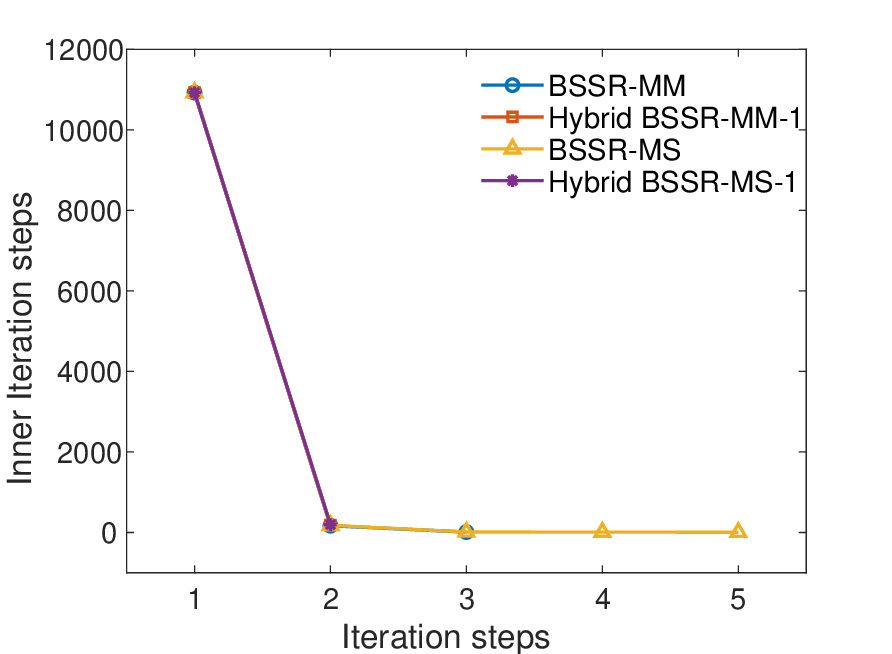} 
    } 
    \caption{The inner iterations of (Hybrid) BSSR-MM and BSSR-MS methods for one-dimensional heat transfer.}
\end{figure}

Furthermore, we compare the actual computational time for all the above methods. The data are listed in Table \ref{tab:1D_time}. For large $\epsilon$, the BSGS (or BSSR) method always has the best performance. When $\epsilon$ gets smaller, the performance of other methods is improved. In particular, the Hybrid BSSR-MM method and the Hybrid BSSR-MS method have similar computational times. However, in this one-dimensional toy model, the number of components in $\bs{u}^1$ is still about $1/4$ of the total number of variables, and solving the equation of $\bs{u}^1$ takes a large amount of time when $\epsilon$ is small. As a result, the BSGS and the BSSR methods still hold the best performance for $\epsilon = 10^{-6}$, even if other methods require only one iteration. In the next section, we will consider the three-dimensional velocity space, so that the proportion of $\bs{u}^1$ in $\bs{u}$ will become smaller. 

\begin{table}[!ht] \label{tab:1D_time}
    \caption{Computing time for one-dimensional heat transfer (measure time by the second).}
    \footnotesize
    \centering
    \begin{tabular}{ccccccccc}
        \hline 
        \hline 
        & & & $\epsilon = 1$ & $\epsilon = 10^{-1}$ & $\epsilon = 10^{-2}$ & $\epsilon = 10^{-3}$ & $\epsilon = 10^{-4}$ & $\epsilon = 10^{-6}$ \\
        \hline
        \multirow{5}{4em}{First order} & BSGS & & $0.0042$ & $0.0383$ & $0.1182$ & $0.7626$ & $1.2799$ & $1.4125$ \\
& BSGS-MM & & $2.2980$ & $0.6766$ & $0.3534$ & $1.1341$ & $1.7307$ & $1.8482$ \\
& Hybrid BSGS-MM-$1$ & & $0.1275$ & $0.1746$ & $0.4342$ & $1.0431$ & $1.7575$ & $1.9325$ \\
& BSGS-MS & & -- & -- & -- & $1.0499$ & $1.7966$ & $1.9426$ \\
& Hybrid BSGS-MS-$1$ & & $0.1447$ & $0.1796$ & $0.4480$ & $1.0000$ & $1.8690$ & $1.8713$ \\
        \hline
        \multirow{6}{4em}{Second order} & BSSR & & $0.0585$ & $0.1302$ & $0.6447$ & $10.220$ & $88.469$ & $2350.6$ \\
& BSSR-MM & & $9.2520$ & $1.7299$ & $1.2873$ & $9.5040$ & $66.147$ & $4103.9$ \\
& Hybrid BSSR-MM-$1$ & & $1.2217$ & $0.5678$ & $1.4010$ & $9.2619$ & $68.572$ & $4277.2$ \\
& BSSR-MS & & -- & -- & -- & $9.3642$ & $69.734$ & $4276.2$ \\
& Hybrid BSSR-MS-$N_b$ & & $1.3608$ & $0.6585$ & $1.6115$ & $9.8017$ & $66.349$ & $4265.6$ \\
& & & ($N_b$ = 6)  & ($N_b$ = 4) & ($N_b$ = 3) & ($N_b$ = 1) & ($N_b$ = 1) & ($N_b$ = 1) \\
        \hline
        \hline 
    \end{tabular}
\end{table}

Lastly, we present a comparison between the general synthetic iterative scheme (GSIS \cite{Su2020Can}) and the hybrid BSGS-MM method. It has been shown in \cite{Su2020Can} that the GSIS can also achieve fast convergence for both small and large Knudsen numbers.  Briefly speaking, the iteration in the GSIS is to solve the steady-state Navier-Stokes-Fourier equations with higher-order corrections to find conservative variables, and then plug them into the Maxwellian in the BGK collision term and solve the distribution function. When applied to one-dimensional moment equations, it turns out to be
\begin{equation} \label{eq:GSIS}
- \bs{A}^+ \bar{\bs{u}}_{j-1}^{(n+1)}
+ \left( |\bs{A}| - \frac{1}{\epsilon} \Delta x \bs{I} \right) \bar{\bs{u}}_j^{(n+1)} 
+ \bs{A}^- \bar{\bs{u}}_{j+1}^{(n+1)}
=\frac{1}{\epsilon} \Delta x (\bs{I} + \bs{L}) \bar{\bs{u}}_j^{(*)},
\end{equation}
where
\begin{equation*}
\bs{\bar{u}_j}^{(*)} = \begin{pmatrix} \bs{\bar{u}}_j^{1,(n+1)} \\ \bs{\bar{u}}_j^{2,(n)}, \end{pmatrix}
\end{equation*} 
and $\bs{\bar{u}}_j^{1,(n+1)}$ is solved from \eqref{eq:Euler} with $N_0 = 3$ chosen in \eqref{eq:u_twopart}. The comparison in the number of iterations are given in Fig. \ref{fig:comparison}. It is shown that both methods have fast convergence, and GSIS has a slightly larger number of iterations. Within each iteration, both GSIS and Hybrid BSGS-MM-1 need to solve the macroscopic equations \eqref{eq:Euler} once. The Hybrid BSGS-MM-1 method needs to apply the forward-backward scan of the SGS method twice, once in $\mathcal{T}_{\text{BSGS}}$ and once in $\mathcal{T}_{\text{BSGS-MM}}$ (see \eqref{eq:HybridBSGSMM}). In GSIS, each iteration requires solving \eqref{eq:GSIS} exactly, which also involves a few SGS scans. In our experiments, the convergence of these scans can always be achieved within 3 iterations, and therefore, the computational costs of each iteration in both methods are similar. Overall speaking, the Hybrid BSGS-MM-1 method has better performance in this test case.
\begin{figure}[!ht] 
    \centering
    \subfigure[GSIS]{
    \label{fig:GSIS}
    \includegraphics[width=0.4\textwidth, trim=5 0 40 22, clip]{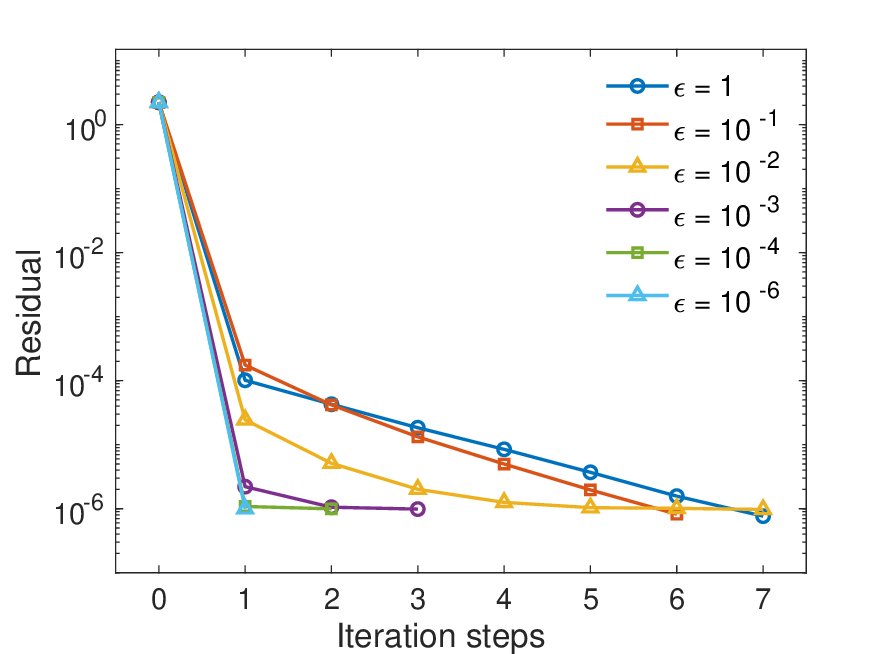}
    } \quad
    \subfigure[Hybrid BSGS-MM-1 method]{
    \includegraphics[width=0.4\textwidth, trim=5 0 40 22, clip]{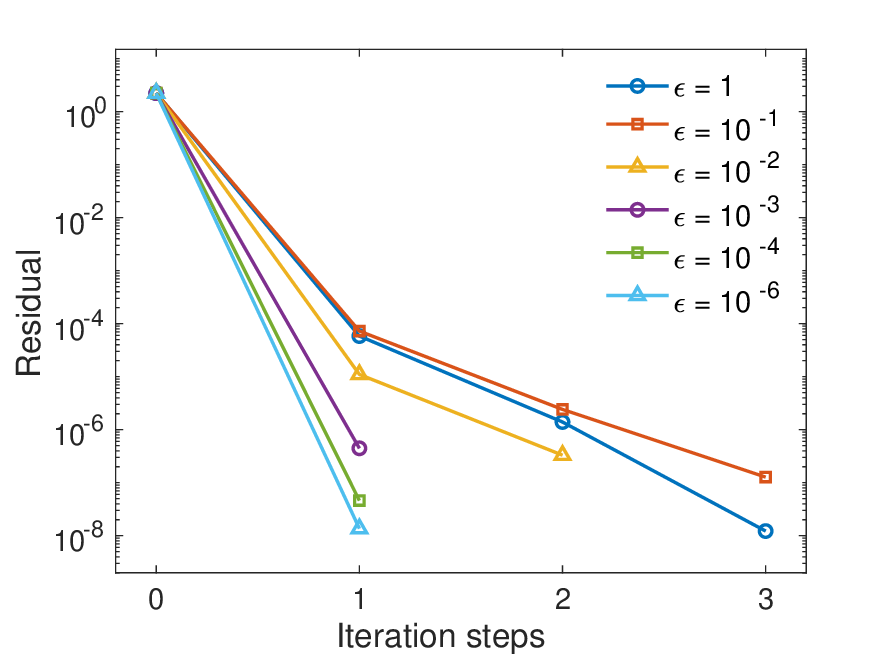}
    }
    \caption{The convergence rate of GSIS and Hybrid BSGS-MM method for one-dimensional heat transfer.}\label{fig:comparison}
\end{figure}

\subsection{Examples with two-dimensional space and three-dimensional velocity}
To reflect a more realistic case, we study the following linearized Boltzmann equation with $\bs{x} \in \mathbb{R}^2$ and $\bs{v} \in \mathbb{R}^3$, and we adopt the expansion of the distribution function in \cite{Kumar1966,Hu2020burnett} using Burnett polynomials:
\begin{equation} \label{eq:3d_expansion}
f(\bs{x}, \bs{v}) = \sum_{l=0}^{\infty} \sum_{m=-l}^{l} \sum_{n=0}^{\infty} u^{l m n} (\bs{x}) \varphi_{l m n} (\bs{v}) \omega(\bs{v})
\end{equation}
where
\begin{equation*}
\varphi_{l m n} (\bs{v}) = |\bs{v}|^l Y_{lm} \left( \frac{\bs{v}}{|\bs{v}|} \right) L_n^{(l+1/2)} \left( \frac{|\bs{v}|^2}{2} \right),
\quad
\omega(\bs{v}) = \frac{1}{(\sqrt{2 \pi})^3} \exp \left( - \frac{|\bs{v}|^2}{2} \right)
\end{equation*}
with $Y_{lm}(\cdot)$ are real spherical harmonics \cite{Blanco1997} and $L_n^{(l+1/2)}$ represent Laguerre polynomials. The collision operator $\mathcal{L}$ is chosen based on Maxwell molecules, so that the matrix $\bs{L}$ in \eqref{eq:moment_eqn} is diagonal. Some of the diagonal values are given by the following facts:
 \begin{gather*}
 \mathcal{L} [\varphi_{000} \omega] = 0, \quad \mathcal{L} [\varphi_{001} \omega] = 0, \quad \mathcal{L} [\varphi_{1m0} \omega] = 0, \quad \mathcal{L} [\varphi_{1m1} \omega] = -\frac{2}{3}\varphi_{1m1} \omega, \quad \mathcal{L} [\varphi_{2m0} \omega] = -\varphi_{2m0} \omega.
 \end{gather*}
More coefficients can be found in \cite{Alterman}. The macroscopic quantities including the density and the temperature are related to the coefficients by $(\rho, T) = \left( u_{000}, -u_{001}\right) / \left( 2 \sqrt{\pi} \right)$. The truncation of the infinite series \eqref{eq:3d_expansion} is done by selecting a positive integer $L$, and then reserve terms with $l=0,1,\cdots,L$ and $m=-l,\cdots,l$, $n=0,1,\cdots,\lceil (L-l)/2 \rceil$. The solution $\bs{u}$ is then a vector composed of $u^{lmn}$ with $l,m,n$ within the above range, and the two-dimensional moment equations can be formulated as
\begin{equation} \label{eq:2D_moment_eqn}
\bs{A}_1 \frac{\partial \bs{u}}{\partial x_1} + \bs{A}_2 \frac{\partial \bs{u}}{\partial x_2} = \frac{1}{\epsilon} \bs{L} \bs{u}.
\end{equation}
Such a choice of the truncation covers the classical Euler equations ($L = 1$), Grad's 13-moment equations ($L = 2$) and Grad's 26-moment equations ($L = 3$). In the following sections, two test cases will be studied under these settings.

\subsubsection{Heat transfer in a cavity} \label{sec:2D_heat}

The first test case is a heat transfer problem where the domain is a unit square $\Omega = (0,1) \times (0,1)$. The boundaries can have different temperatures, causing heat transfer inside the chamber.
All walls are fully diffusive, so that the boundary conditions of the moment equations \eqref{eq:2D_moment_eqn} can again be formulated (according to Section $3$ of Supplementary Material) .
In our test, we set the temperature of the top wall $(0,1)\times\{1\}$ to be $1$, and all other walls have temperature $0$. Numerically, the boundary conditions are processed using the ghost-cell method as in the one-dimensional case \eqref{eq:bc}. To uniquely determine the solution, we again set the total mass to be $1$:
\begin{displaymath}
\int_{\Omega} \rho(\bs{x}) \,\mathrm{d}\bs{x} =
\frac{1}{2\sqrt{\pi}}\int_{\Omega} u^{000}(\bs{x}) \,\mathrm{d}\bs{x} = 1.
\end{displaymath}

To solve the heat transfer problem, we  use a $50\times 50$ uniform grid to discretize the spatial domain, and choose $L = 10$ in the truncation of the series \eqref{eq:3d_expansion}. The total number of equations is $341$. The macroscopic part of the solution $\bs{u}^1$ in \eqref{eq:u_twopart} and \eqref{eq:u_decomp} consists of $13$ variables, including all $u^{lmn}$ with $l \leqslant 2$ and $n \leqslant \lceil (2-l)/2 \rceil$. Thus, the equations of $\bs{u}^1$ are essentially Grad's 13-moment equations, which are equivalent to the Navier-Stokes equations in the linear, steady-state case. In \eqref{eq:u_decomp}, by asymptotic analysis \cite{Struchtrup2004}, the vector $\bs{u}^k$ for $k = 2,\ldots,L-1$ is chosen as a vector consisting of all variables in $\mathcal{U}_{k+1} \setminus \mathcal{U}_k$  with
\begin{displaymath}
\mathcal{U}_k = \{u^{lmn} \mid l =0,1,\ldots,k, \quad m = -l,\ldots,l, \quad n = 0, \ldots, \lceil (k-l)/2 \rceil\}.
\end{displaymath}
The numerical solutions of the density and temperature computed using the second-order method are displayed in Figures \ref{fig:2D_rho} and \ref{fig:2D_T}. For larger $\epsilon$, both the density and temperature in the domain distribute more homogeneously, and the temperature jump on the top wall becomes more obvious, leading to a large discontinuity in the top-left and top-right corners.

\begin{figure}[!ht] \label{fig:2D_rho}
    \centering
    \subfigure[$\epsilon = 1$]{
    \includegraphics[width=0.31\textwidth, trim=15 3 45 20, clip]{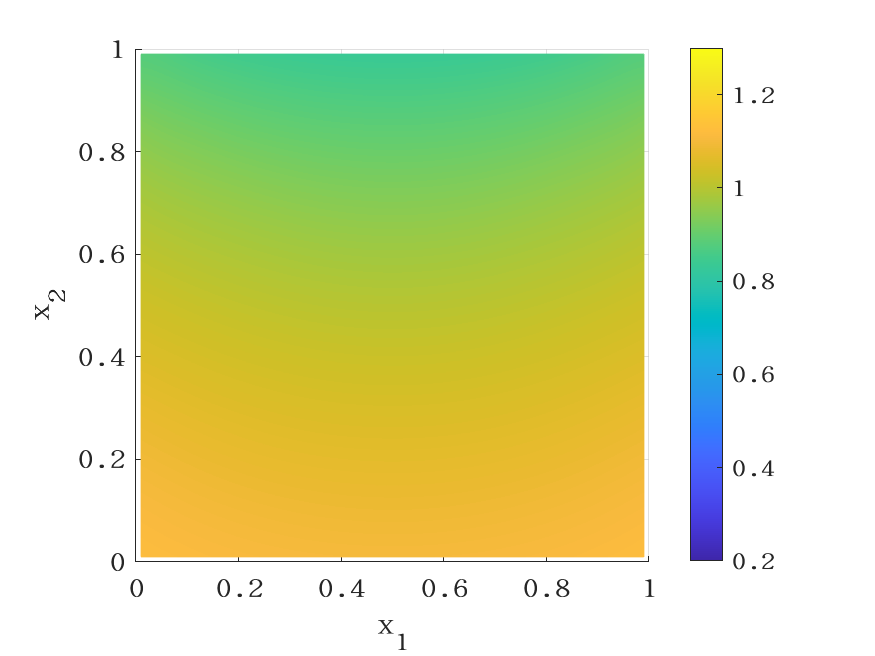}
    } 
    \subfigure[$\epsilon = 10^{-2}$]{
    \includegraphics[width=0.31\textwidth, trim=15 3 45 20, clip]{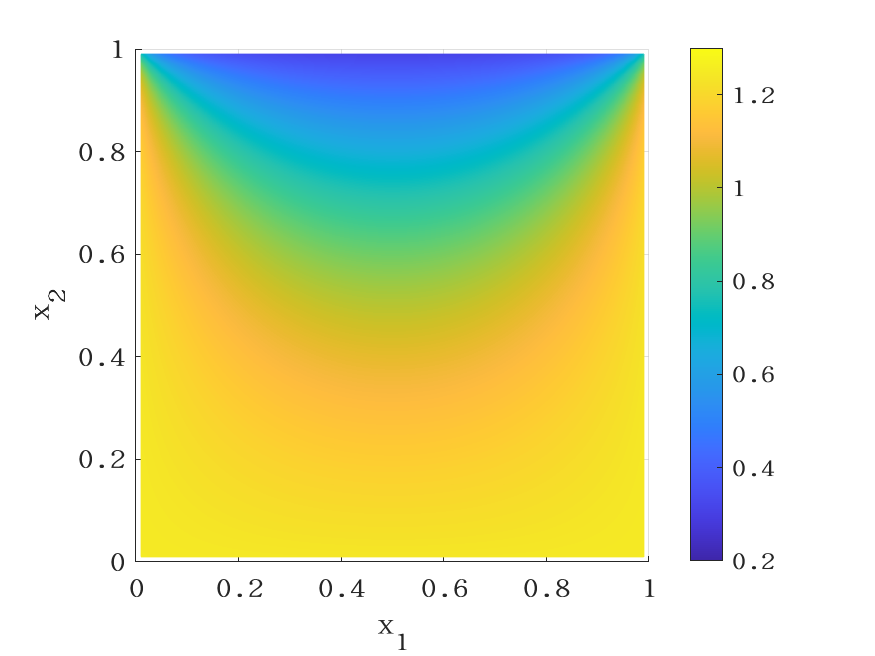}
    } 
    \subfigure[$\epsilon = 10^{-4}$]{
    \includegraphics[width=0.31\textwidth, trim=15 3 45 20, clip]{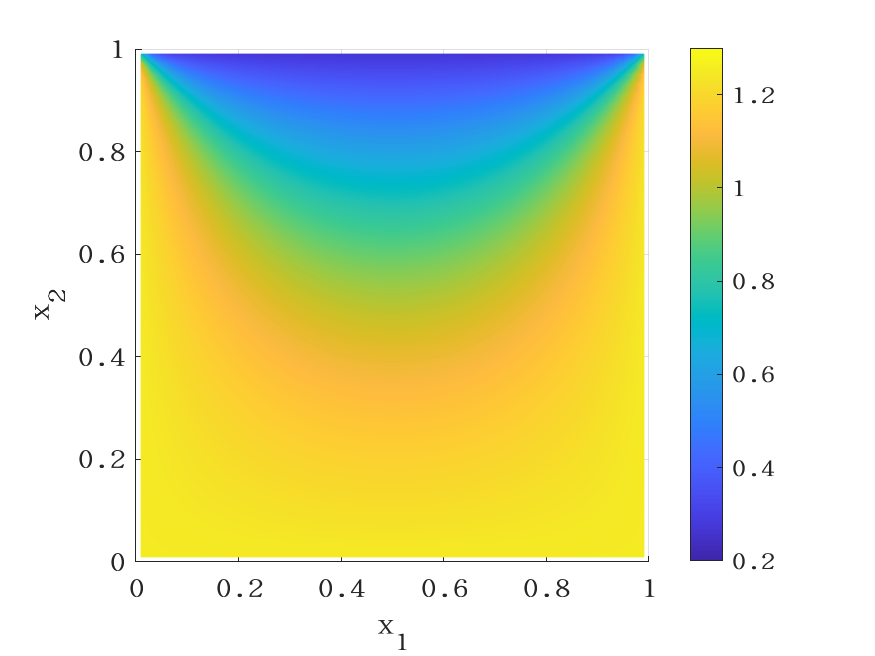}
    }
    \caption{The distributions of density for heat transfer in a cavity.}
\end{figure}
\begin{figure}[!ht] \label{fig:2D_T}
    \centering
    \subfigure[$\epsilon = 1$]{
    \includegraphics[width=0.31\textwidth, trim=15 3 45 20, clip]{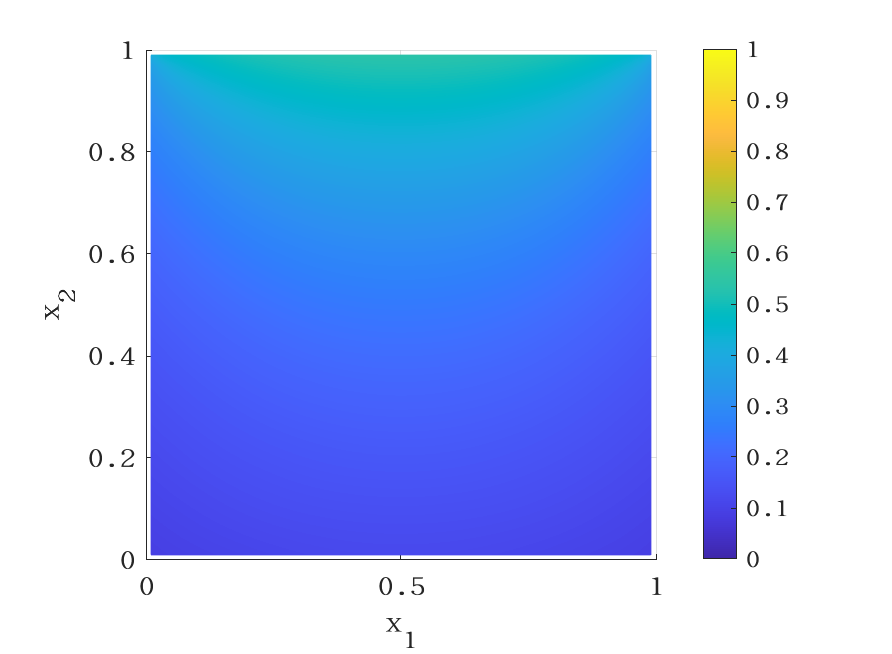}
    } 
    \subfigure[$\epsilon = 10^{-2}$]{
    \includegraphics[width=0.31\textwidth, trim=15 3 45 20, clip]{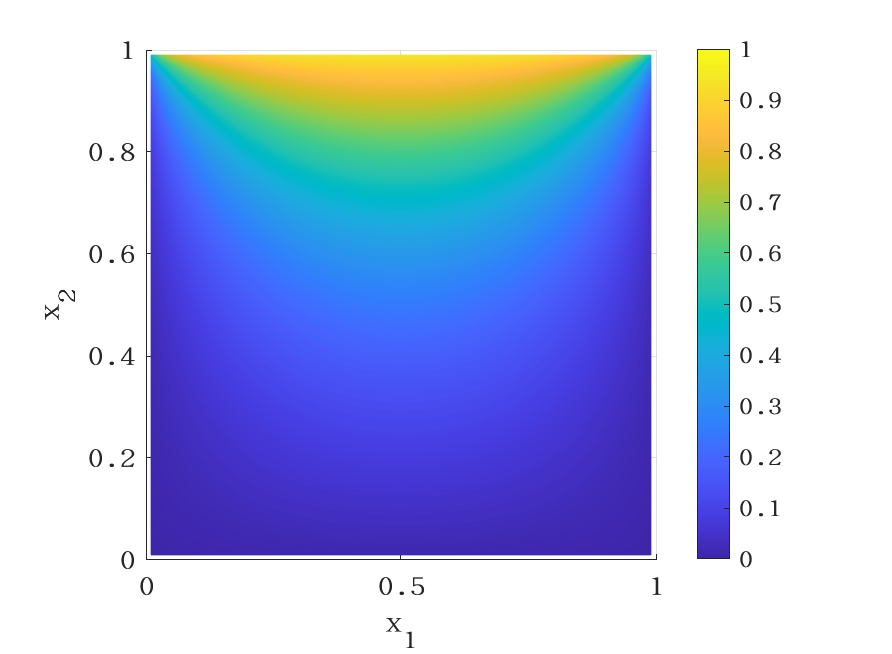}
    } 
    \subfigure[$\epsilon = 10^{-4}$]{
    \includegraphics[width=0.31\textwidth, trim=15 3 45 20, clip]{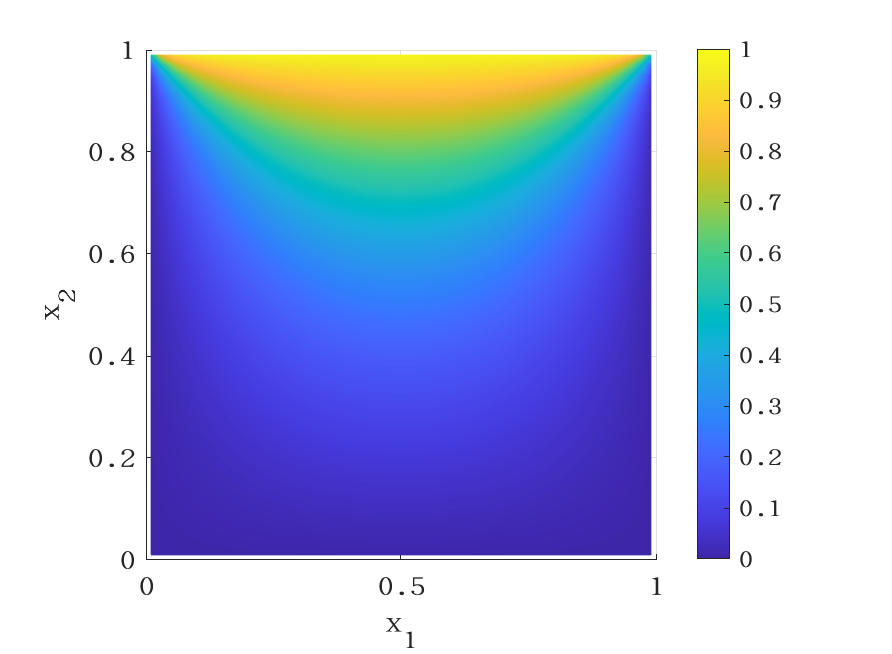}
    }
    \caption{The distributions of temperature for the heat transfer in a cavity.}
\end{figure}

We first study the performance of the iterative methods applied to the first-order scheme \eqref{eq:1D_first}. The norm of the residual versus the number of iterations is given in Figure \ref{fig:2D_BSGS_rate} for the BSGS and BSGS-MM methods. In this test case, even for $\epsilon = 0.1$, the BSGS-MM method reaches the threshold $10^{-6}$ within 20 iterations, which is faster than the BSGS method. However, it fails to converge for $\epsilon = 1$. Interestingly, the Hybrid BSGS-MM method does not converge for $\epsilon = 1$ as well, even if we increase $N_b$ to $20$. Further increasing $N_b$ is meaningless since the BSGS method requires only $47$ steps to reach the threshold of the residual. The Hybrid BSGS-MS method converges when $N_b$ is set to be $6$, and the number of inner iteration for solving the equations of $\bs{u}^1$ is given in Figure \ref{fig:2D_inner_eps1}. Figure \ref{fig:2D_BSGS_MM_inner} also gives results for other values of $\epsilon$. When $\epsilon = 10^{-1}$, the BSGS-MS method still fails to converge, but all other methods perform quite well. In general, when $\epsilon$ decreases, more inner iteration steps are required to solve the 13-moment equations, but this part takes a relatively small proportion of the total computational cost due to its small number of variables.

\begin{figure}[!ht] \label{fig:2D_BSGS_rate}
    \centering
    \subfigure[BSGS method]{
    \includegraphics[width=0.4\textwidth, trim=5 0 38 22, clip]{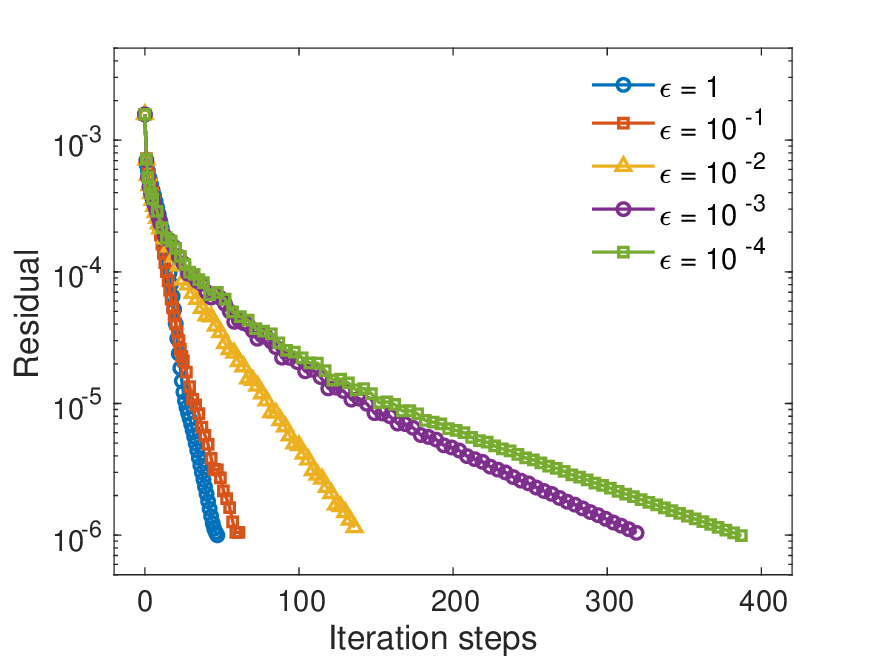}
    } \quad
    \subfigure[BSGS-MM method]{
    \includegraphics[width=0.405\textwidth, trim=5 0 25 22, clip]{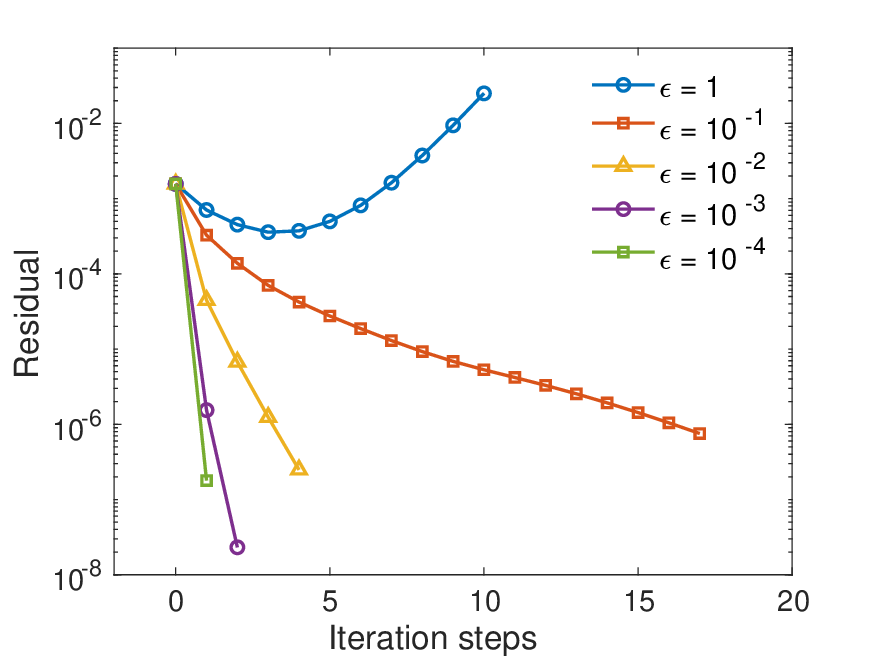}
    }
    \caption{The convergence rate of BSGS and BSGS-MM methods for heat transfer in a cavity.}
\end{figure}

\begin{figure}
\label{fig:2D_BSGS_MM_inner}
    \centering
    \subfigure[$\epsilon = 1$]{\label{fig:2D_inner_eps1}
    \includegraphics[width=0.4\textwidth, trim=10 0 28 18, clip]{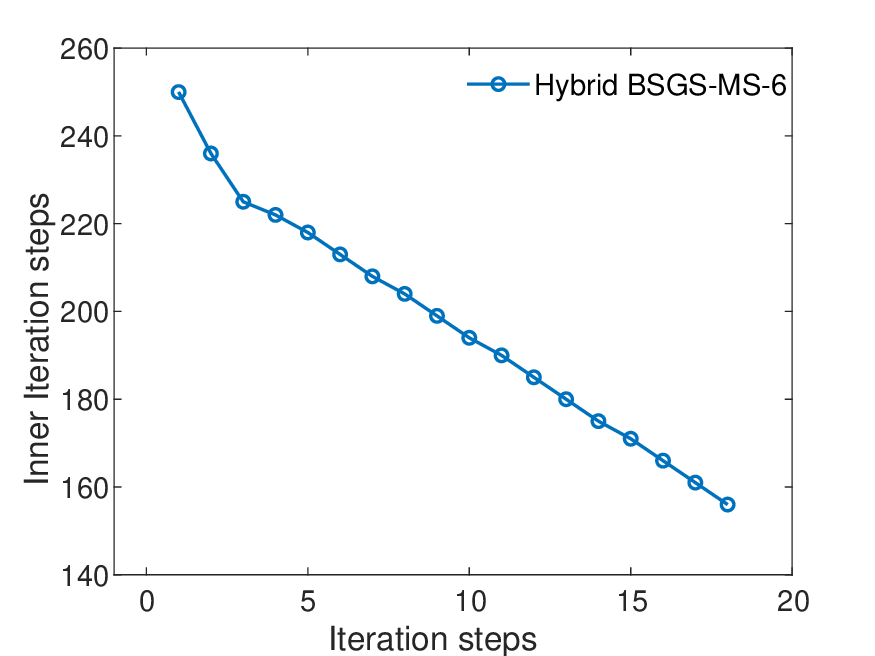}
    } \quad
    \subfigure[$\epsilon = 10^{-1}$]{
    \includegraphics[width=0.405\textwidth, trim=10 0 28 18, clip]{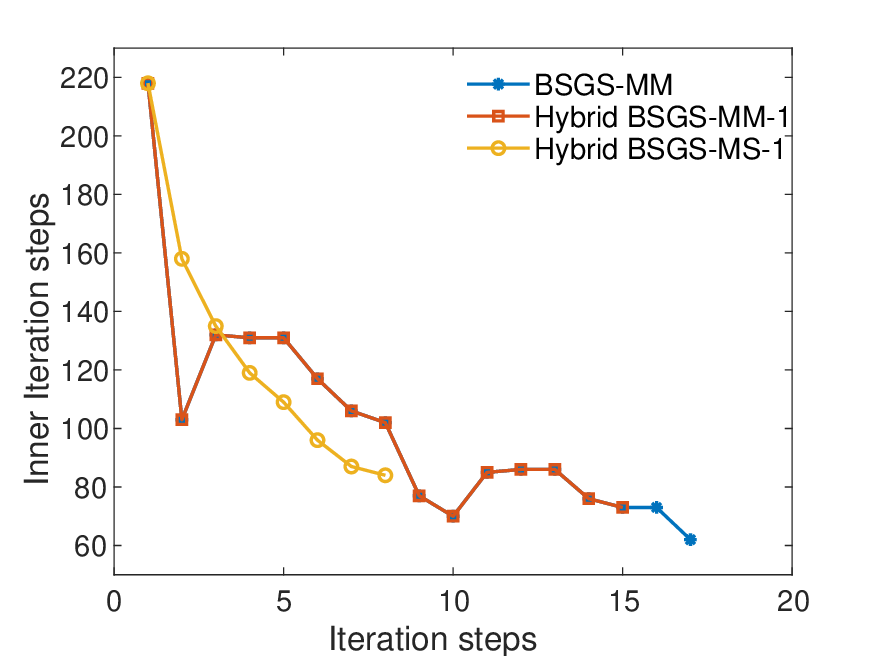}
    } \\
    \subfigure[$\epsilon = 10^{-2}$]{
    \includegraphics[width=0.4\textwidth, trim=10 0 28 18, clip]{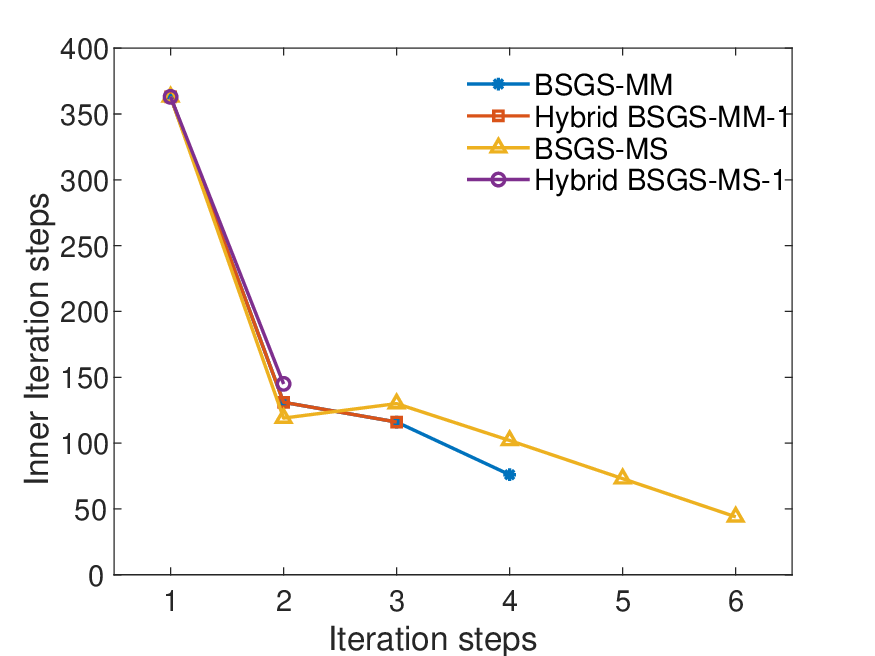}
    }  \quad
    \subfigure[$\epsilon = 10^{-3}$]{
    \includegraphics[width=0.41\textwidth, trim=5 0 28 18, clip]{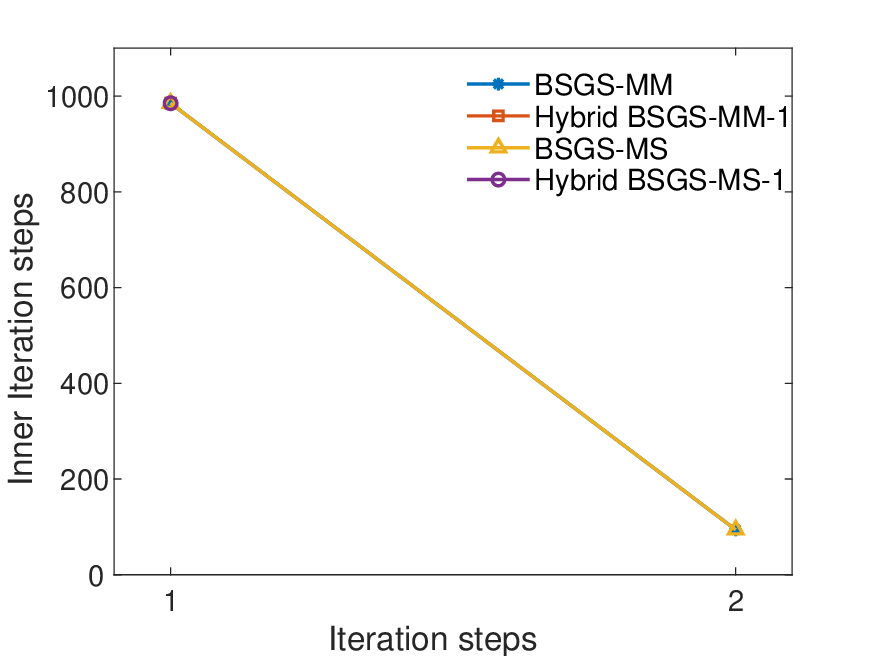} 
    } 
    \caption{The inner iterations of (Hybrid) BSGS-MM and BSGS-MS methods for heat transfer in a cavity.}
\end{figure}

The behavior of iterative methods for the second-order method \eqref{eq:1D_second} is generally the same, but Figure \ref{fig:2D_BSSR_rate} shows that the BSSR method requires more iterations to converge compared with the first-order method, and the BSSR-MM method diverges even for $\epsilon = 10^{-1}$. The numbers of inner iterations are given in Figure \ref{fig:2D_BSSR_MM_inner}, but we again emphasize that such data become less significant when $\bs{u}^1$ only takes up a small proportion of the entire solution. It is worth mentioning that when $\epsilon = 1$, the Hybrid BSSR-MM also fails to converge for a reasonable choice of $N_b$.
 
\begin{figure}[!ht] \label{fig:2D_BSSR_rate}
    \centering
    \subfigure[BSSR method]{
    \includegraphics[width=0.4\textwidth, trim=5 0 38 22, clip]{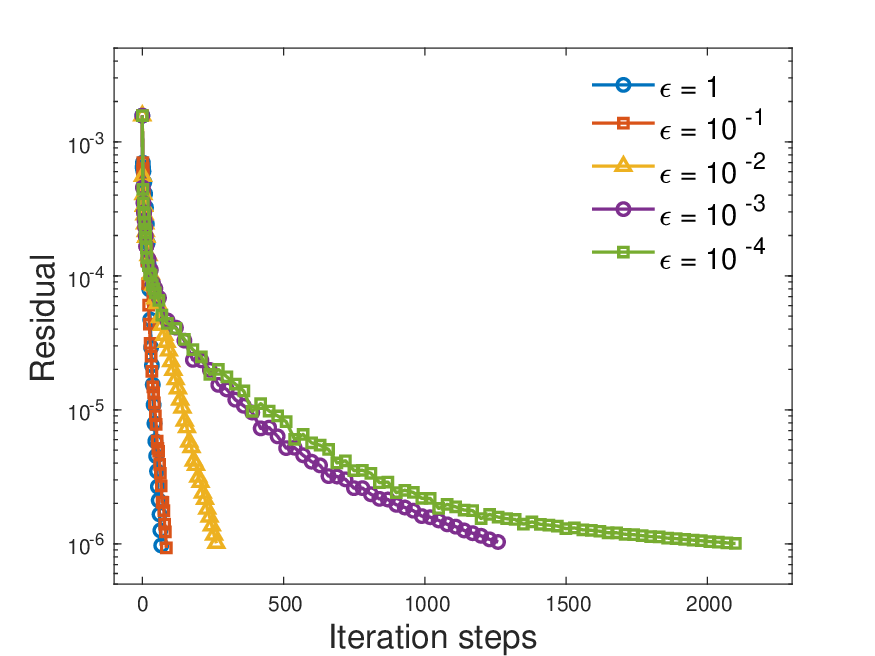}
    } \quad
    \subfigure[BSSR-MM method]{
    \includegraphics[width=0.4\textwidth, trim=5 0 38 22, clip]{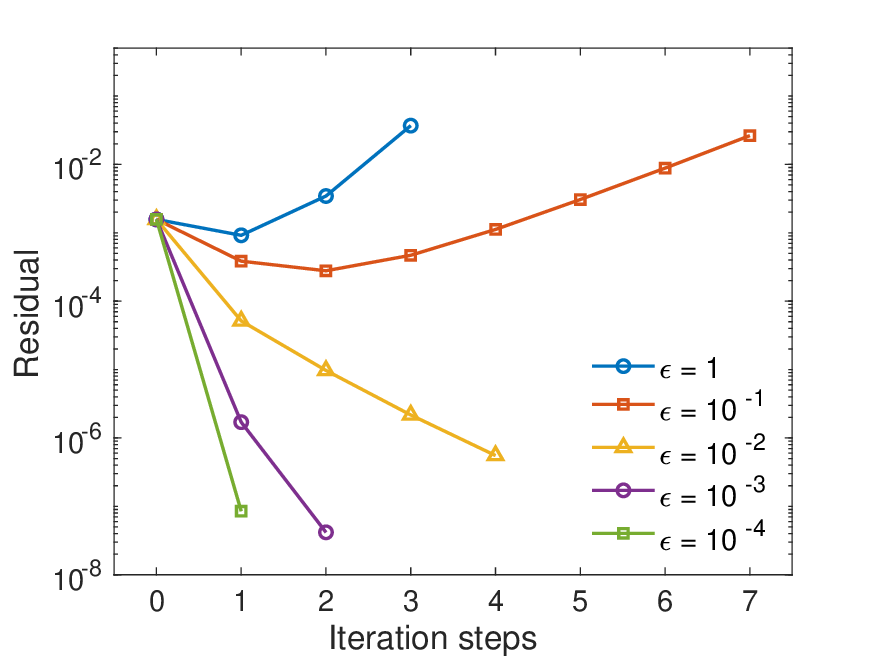}
    }
    \caption{The convergence rate of BSSR and BSSR-MM methods for heat transfer in a cavity.}
\end{figure}
\begin{figure}[!ht] \label{fig:2D_BSSR_MM_inner}
    \centering
    \subfigure[$\epsilon = 10^{-1}$]{
    \includegraphics[width=0.30\textwidth, trim=10 0 28 18, clip]{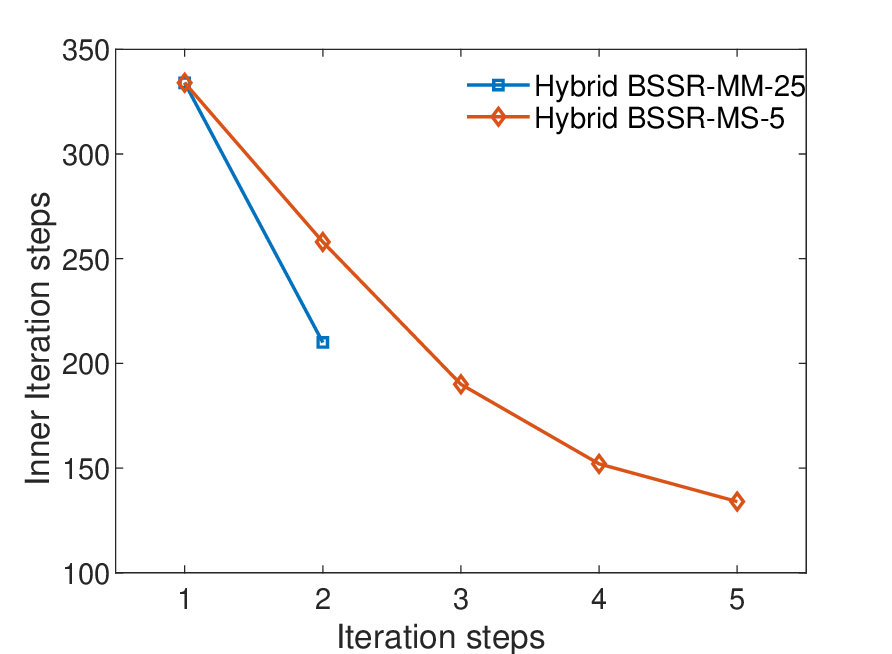}
    } 
    \subfigure[$\epsilon = 10^{-2}$]{
    \includegraphics[width=0.295\textwidth, trim=10 0 28 18, clip]{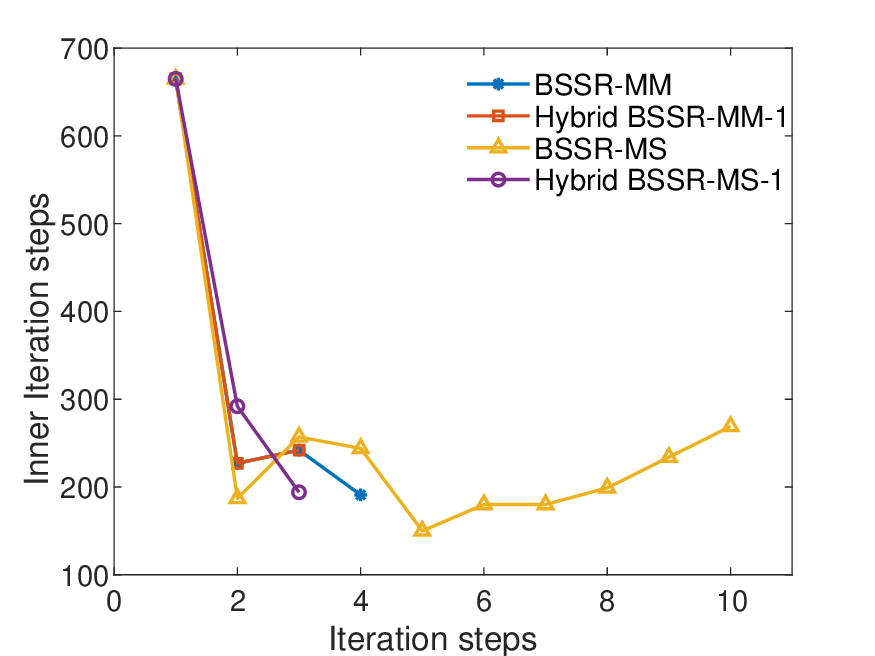}
    } 
    \subfigure[$\epsilon = 10^{-3}$]{
    \includegraphics[width=0.30\textwidth, trim=5 0 28 18, clip]{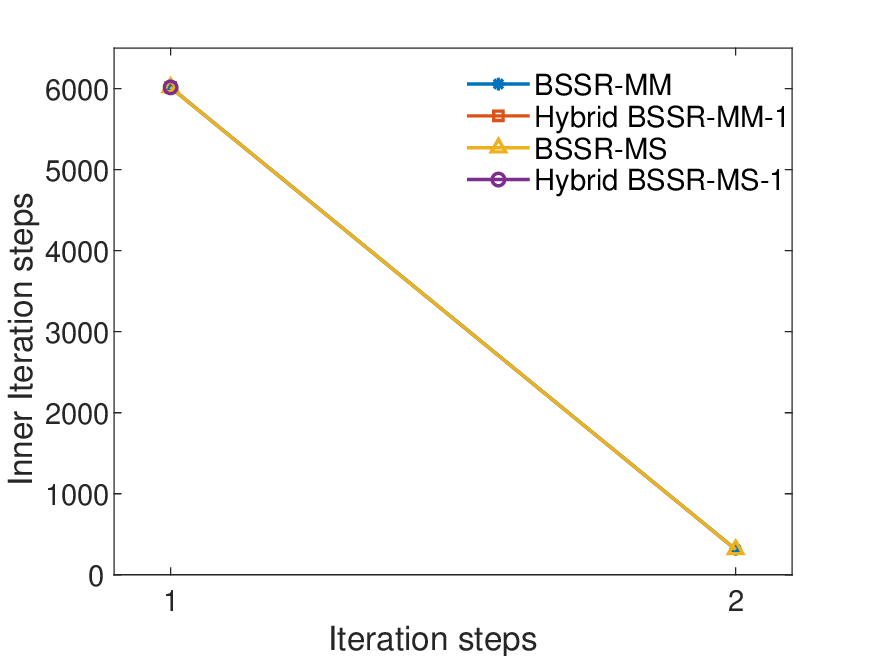} 
    }
    \caption{The inner iterations of (Hybrid) BSSR-MM and BSSR-MS methods for heat transfer in a cavity.}
\end{figure}

To summarize, we tabulate the computational time of all the methods above in Table \ref{tab:2D_time}. It is clear that for large $\epsilon$, the BSGS/BSSR method is still the optimal choice among this family of approaches. On the other end, when $\epsilon = 10^{-4}$, all methods with the micro-macro or multiscale decomposition have similar performances since they have only one or two outer iterations. Averagely, the Hybrid BSGS-MS/BSSR-MS method has the best performance since it only requires solving small matrices in each iteration. In these tests, when $\epsilon$ is close to zero, the computational cost is still high due to the inefficiency of solving the 13-moment equations (or Navier-Stokes-Fourier equations). This can be improved by adopting a better solver for the equations of $\bs{u}^1$.

\begin{table}[!ht] \label{tab:2D_time}
    \caption{Computing time for heat transfer in a cavity (measure time by the second).}
    \footnotesize
    \centering
    \begin{tabular}{cccccccc}
        \hline 
        \hline 
        & & & $\epsilon = 1$ & $\epsilon = 10^{-1}$ & $\epsilon = 10^{-2}$ & $\epsilon = 10^{-3}$ & $\epsilon = 10^{-4}$ \\
        \hline
        \multirow{6}{4em}{First order} & BSGS & & $140.17$ & $183.84$ & $410.29$ & $950.68$ & $1127.8$ \\
& BSGS-MM & & -- & $93.790$ & $40.277$ & $65.446$ & $71.606$ \\
& Hybrid BSGS-MM-$1$ && -- & $102.51$ & $43.035$ & $55.705$ & $70.045$ \\ 
& BSGS-MS & & -- & -- & $42.597$ & $59.723$ & $69.711$ \\
& Hybrid BSGS-MS-$N_b$ & & $321.56$ & $57.126$ & $22.851$ & $32.920$ & $70.575$ \\
& & & ($N_b=6$) & ($N_b=1$) & ($N_b=1$) & ($N_b=1$) & ($N_b=1$) \\
        \hline
        \multirow{7}{4em}{Second order} & BSSR & & $678.13$ & $881.67$ & $2714.7$ & $13139$ & $21693$ \\
& BSSR-MM & & -- & -- & $124.23$ & $348.22$ & $2530.4$ \\
& Hybrid BSSR-MM-$N_b$ & & -- &  $391.28$ & $124.66$ & $341.44$ & $2579.6$ \\
& & & & ($N_b=25$) & ($N_b=1$) & ($N_b=1$) & ($N_b=1$) \\
& BSSR-MS & & -- & -- & $125.03$ & $351.18$ & $2513.2$ \\
& Hybrid BSSR-MS-$N_b$ && -- & $258.07$ & $125.83$ & $342.41$ & $2558.9$ \\
& & & & ($N_b=5$) & ($N_b=1$) & ($N_b=1$) & ($N_b=1$) \\
        \hline
        \hline 
    \end{tabular}
\end{table}
 
\subsubsection{Cavity flow}

Lid-driven cavity flow is another common benchmark test for two-dimensional rarefied flows. We again assume that the gas is confined in a square cavity $\Omega = (0,1)\times (0,1)$. All the walls have the same temperature. The top lid at $y = 1$ has a horizontal velocity $\bs{U}_w = (1,0)^T$, and all other walls are stationary. The boundary conditions of moment methods can again be formulated (according to Section $3$ of Supplementary Material). In our tests, we again choose $L = 10$ and the grid size to be $50 \times 50$. Only the second-order scheme \eqref{eq:1D_second} is tested in our experiments. The numerical results for the temperature and the streamlines of the heat flux are plotted in Figure \ref{fig:temp_heatflux}. Note that the heat flux $\bs{q}$ can also be obtained directly from the variables $u^{lmn}$ by
\begin{displaymath}
    q_1(\bs{x}) = - \frac{5}{4} \sqrt{\frac{3}{\pi}} u_{111}(\bs{x}),\quad
    q_2(\bs{x}) = - \frac{5}{4} \sqrt{\frac{3}{\pi}} u_{1,-1,1}(\bs{x}),\quad
    q_3(\bs{x}) = - \frac{5}{4} \sqrt{\frac{3}{\pi}} u_{101}(\bs{x}).
\end{displaymath}
For large Knudsen numbers, it can be observed in the plots that the heat transfers from the cold area to the hot area, which is one of the typical rarefaction effects.

Table \ref{tab:cavityflow_time} lists the computational time of all the five iterative methods. The Hybrid BSSR-MS method again shows its competitiveness in most cases, despite its divergence for $\epsilon = 1$. For $\epsilon = 10^{-3}$ and $10^{-4}$, all methods except BSSR requires only one outer iteration, and again the performance is limited by the Navier-Stokes solver, which is to be improved in our future works.

\begin{figure}[!ht] \label{fig:temp_heatflux}
    \centering
    \subfigure[$\epsilon = 1$]{
    \includegraphics[width=0.31\textwidth, trim=15 3 35 5, clip]{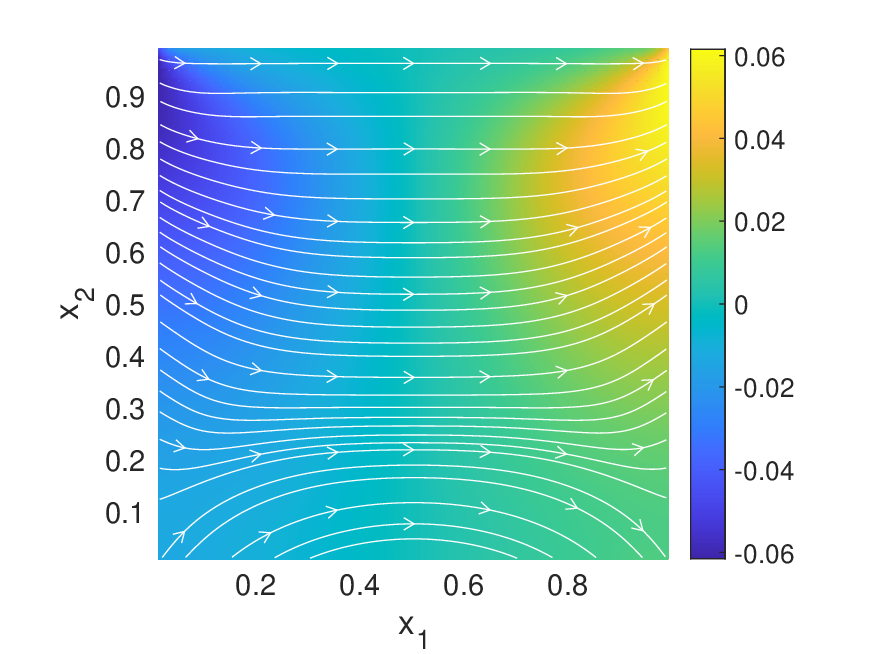}
    }
    \subfigure[$\epsilon = 10^{-2}$]{
    \includegraphics[width=0.31\textwidth, trim=15 3 35 5, clip]{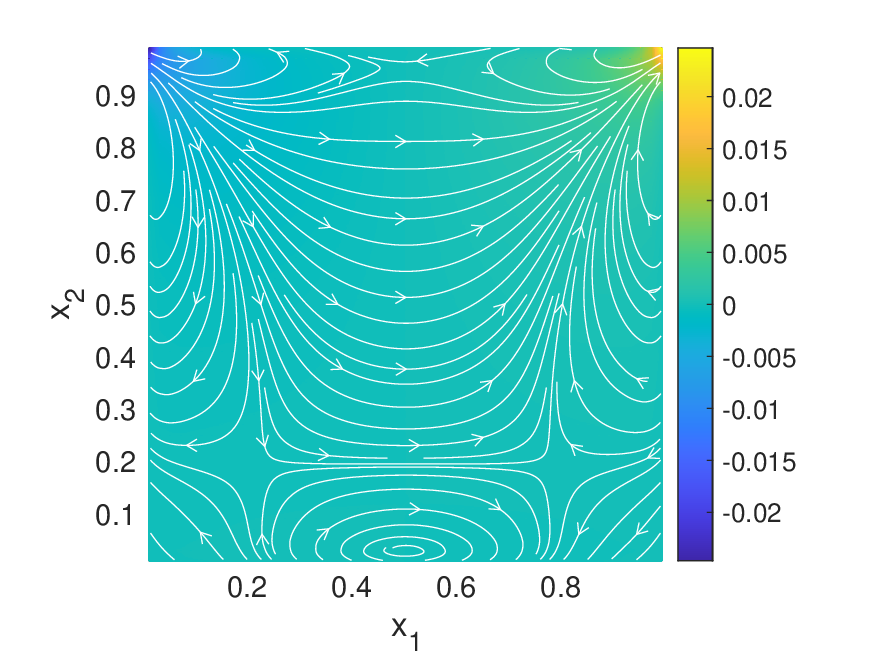} 
    }
    \subfigure[$\epsilon = 10^{-4}$]{
    \includegraphics[width=0.31\textwidth, trim=15 3 35 5, clip]{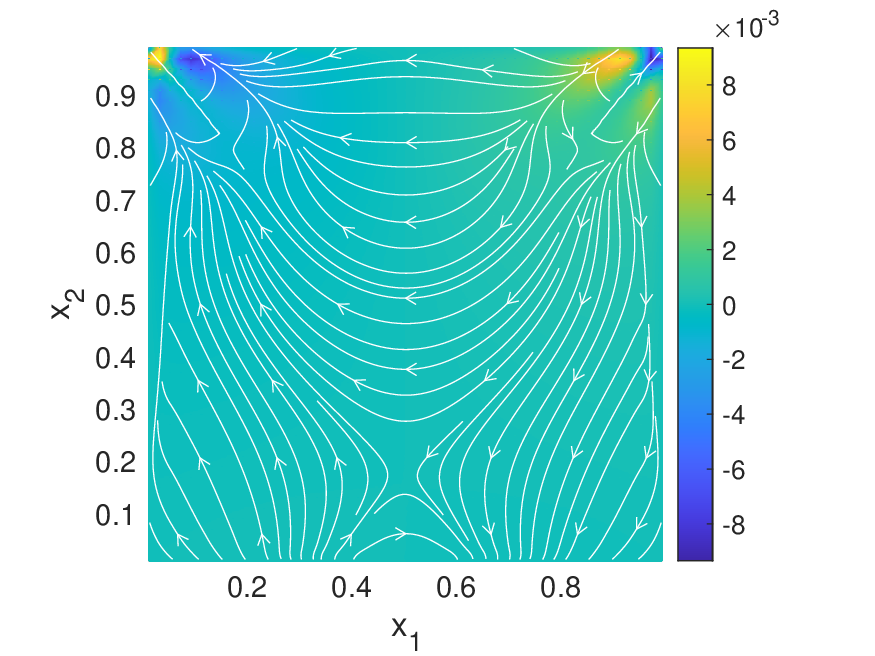}
    }    
    \caption{The distributions of temperature and the streamline of heat flux for lid-driven cavity flow.}
\end{figure}

\begin{table}[!ht] \label{tab:cavityflow_time}
    \caption{Computing time for lid-driven cavity flow (measure time by the second).}
    \footnotesize
    \centering
    \begin{tabular}{cccccccc}
        \hline 
        \hline 
        & & & $\epsilon = 1$ & $\epsilon = 10^{-1}$ & $\epsilon = 10^{-2}$ & $\epsilon = 10^{-3}$ & $\epsilon = 10^{-4}$ \\
        \hline
        \multirow{6}{4em}{Second order} & BSSR & & $836.52$ & $1031.3$ & $2908.0$ & $11322$ & $21591$ \\
& BSSR-MM & & -- & -- & $132.47$ & $297.18$ & $1354.6$ \\
& Hybrid BSSR-MM-$1$ & & -- &  -- & $141.85$ & $281.39$ & $1349.2$ \\
& BSSR-MS & & -- & -- & -- & $294.17$ & $1370.5$ \\
& Hybrid BSSR-MS-$N_b$ && -- & $306.53$ & $129.35$ & $284.82$ & $1356.3$ \\
& & & & ($N_b=5$) & ($N_b=1$) & ($N_b=1$) & ($N_b=1$) \\
        \hline
        \hline 
    \end{tabular}
\end{table}

%%================Conclusion========================

\section{Conclusion} \label{sec:conclu}
We have studied a family of iterative methods for solving steady-state linearized moment equations. Generally speaking, the BSGS method works well for highly rarefied gases, but the convergence slows down as the Knudsen number decreases. On the contrary, the BSGS-MM method has better efficiency in the dense regime. Better performance can be achieved by coupling these two methods. Our numerical tests confirm the theoretical analysis of our methods. However, the experiments also reveal some practical issues:
\begin{itemize}
\item Although the Fourier analysis confirms the convergence of both BSGS and BSGS-MM methods for a wide range of $\epsilon$, when the wall boundary conditions are imposed, the BSGS-MM method may still diverge for large $\epsilon$, especially when the method is generalized to second-order schemes. This requires more careful analysis of the iterative method on bounded domains, which is a part of our ongoing work.
\item In methods with micro-macro or multiscale decomposition, the inner iteration may take a significant amount of time despite a very small number of outer iterations. Usually the macroscopic part is essentially Euler or Navier-Stokes equations. One can replace the simple BSGS solver used in this work with more specific solvers to gain improved efficiency.
\end{itemize}
The general idea of this family of approaches is also applicable to nonlinear moment methods. The moment method has been applied to the time-dependent Boltzmann equation with the quadratic collision term in \cite{Hu2020burnett}, and the combination of the symmetric Gauss-Seidel method and the multigrid technique has been tested for steady-state moment equations of BGK-type collisions, where the ansatz for the distribution function is nonlinear \cite{Hu2019}. It is then relatively straightforward to add the micro-macro decomposition or the multiscale decomposition, which only requires to split the moment equations into several parts according to the magnitudes of moments. The micro-macro decomposition has been applied in many previous works such as \cite{Koellermeier2023}), and the magnitudes of moments have been studied in \cite{Cai2012}. These works have established solid foundation for us to develop efficient iterative algorithms for nonlinear moment equations in our future work.

\bibliographystyle{siamplain}
\bibliography{M163424.bib}

\begin{thebibliography}{10}

\bibitem{Abdelmalik2016}
{\sc M.~Abdelmalik and E.~van Brummelen}, {\em An entropy stable discontinuous
  {G}alerkin finite-element moment method for the {B}oltzmann equation},
  Comput. Math. Appl., 72 (2016), pp.~1988--1999.

\bibitem{Adams2002}
{\sc M.~Adams and E.~Larsen}, {\em Fast iterative methods for
  discrete-ordinates particle transport calculations}, Progress in Nuclear
  Energy, 40 (2002), pp.~3--159.

\bibitem{Alterman}
{\sc Z.~Alterman, K.~Frankowski, and C.~L. Pekeris}, {\em Eigenvalues and
  eigenfunctions of the linearized {B}oltzmann collision operator for a
  {M}axwell gas and for a gas of rigid spheres}, The Astrophysical Journal
  Supplement series, 7 (1962), pp.~291--331.

\bibitem{Baliti2020}
{\sc J.~Baliti, M.~Hssikou, and M.~Alaoui}, {\em The 13-moments method for heat
  transfer in gas microflows}, Australian J. Mech. Eng., 18 (2020), pp.~80--93.

\bibitem{Bennoune2008}
{\sc M.~Bennoune, M.~Lemou, and L.~Mieussens}, {\em Uniformly stable numerical
  schemes for the {B}oltzmann equation preserving the compressible
  {N}avier-{S}tokes asymptotics}, J. Comput. Phys., 227 (2008), pp.~3781--3803.

\bibitem{Blanco1997}
{\sc M.~A. Blanco, M.~Fl{\'o}rez, and M.~Bermejo}, {\em Evaluation of the
  rotation matrices in the basis of real spherical harmonics}, Journal of
  Molecular structure: THEOCHEM, 419 (1997), pp.~19--27.

\bibitem{Boccelli2024}
{\sc S.~Boccelli, W.~Kaufmann, T.~E. Magin, and J.~G. McDonald}, {\em Numerical
  simulation of rareﬁed supersonic ﬂows using a fourth-order
  maximum-entropy moment method with interpolative closure}, J. Comput. Phys.,
  497 (2024), p.~112631.

\bibitem{Cai2010}
{\sc Z.~Cai and R.~Li}, {\em Numerical regularized moment method of arbitrary
  order for {B}oltzmann-{BGK} equation}, SIAM J. Sci. Comput., 32 (2010),
  pp.~2875--2907.

\bibitem{Cai2012}
{\sc Z.~Cai, R.~Li, and Y.~Wang}, {\em Numerical regularized moment method for
  high mach number flow}, Comm. Comput. Phys., 11 (2012), pp.~1415--1438.

\bibitem{Cai2020}
{\sc Z.~Cai and M.~Torrilhon}, {\em On the {H}olway-{W}eiss debate: Convergence
  of the {G}rad-moment-expansion in kinetic gas theory}, Phys. Fluids, 31
  (2020), p.~126105.

\bibitem{Cai2023}
{\sc Z.~Cai, M.~Torrilhon, and S.~Yang}, {\em Linear regularized 13-moment
  equations with {O}nsager boundary conditions for general gas molecules},
  2023.
\newblock To appear in SIAM J. Appl. Math.

\bibitem{Cercignani1988}
{\sc C.~Cercignani and C.~Cercignani}, {\em The boltzmann equation}, Springer,
  1988.

\bibitem{Chan1989}
{\sc T.~F. Chan and H.~C. Elman}, {\em Fourier analysis of iterative methods
  for elliptic pr}, SIAM review, 31 (1989), pp.~20--49.

\bibitem{Chapman1990}
{\sc S.~Chapman and T.~G. Cowling}, {\em The mathematical theory of non-uniform
  gases: an account of the kinetic theory of viscosity, thermal conduction and
  diffusion in gases}, Cambridge university press, 1990.

\bibitem{Christhuraj2024}
{\sc E.~Christhuraja and M.~Torrilhon}, {\em Generic moment systems and
  {FEM}-based numerical solutions for the {B}oltzmann equation}, AIP Conf.
  Proc., 2996 (2024), p.~070001.

\bibitem{Claydon2017}
{\sc R.~Claydon, A.~Shrestha, A.~Rana, J.~Sprittles, and D.~Lockerby}, {\em
  Fundamental solutions to the regularised 13-moment equations: efficient
  computation of three-dimensional kinetic effects}, J. Fluid Mech., 833
  (2017), p.~R4, \url{https://doi.org/10.1017/jfm.2017.763}.

\bibitem{Dimarco2018}
{\sc G.~Dimarco, R.~Loub\'{e}re, J.~Narski, and T.~Rey}, {\em An efficient
  numerical method for solving the {B}oltzmann equation in multidimensions}, J.
  Comput. Phys., 353 (2018), pp.~46--81.

\bibitem{Grad1949}
{\sc H.~Grad}, {\em On the kinetic theory of rarefied gases}, Communications on
  pure and applied mathematics, 2 (1949), pp.~331--407.

\bibitem{Gu2009}
{\sc X.~Gu and D.~R. Emerson}, {\em A high-order moment approach for capturing
  non-equilibrium phenomena in the transition regime}, J. Fluid Mech., 636
  (2009), pp.~177--216.

\bibitem{Himanshi2023}
{\sc Himanshi, A.~Rana, and V.~Gupta}, {\em Fundamental solutions of an
  extended hydrodynamic model in two dimensions: Derivation, theory, and
  applications}, Phys. Rev. E, 108 (2023), p.~015306.

\bibitem{Hu2020burnett}
{\sc Z.~Hu and Z.~Cai}, {\em {B}urnett spectral method for high-speed rarefied
  gas flows}, SIAM J. Sci. Comput., 42 (2020), pp.~B1193--B1226.

\bibitem{Hu2019}
{\sc Z.~Hu and G.~Hu}, {\em An efficient steady-state solver for microflows
  with high-order moment model}, J. Comput. Phys., 392 (2019), pp.~462--482.

\bibitem{Jaiswal2019}
{\sc S.~Jaiswal, A.~A. Alexeenko, and J.~Hu}, {\em A discontinuous {G}alerkin
  fast spectral method for the full {B}oltzmann equation with general collision
  kernels}, J. Comput. Phys., 378 (2019), pp.~178--208.

\bibitem{Koellermeier2023}
{\sc J.~Koellermeier and H.~Vandecasteele}, {\em Hierarchical micro-macro
  acceleration for moment models of kinetic equations}, J. Comput. Phys., 488
  (2023), p.~112194.

\bibitem{Kumar1966}
{\sc K.~Kumar}, {\em Polynomial expansions in kinetic theory of gases}, Ann.
  Phys., 37 (1966), pp.~113--141.

\bibitem{LeVeque1988}
{\sc R.~J. LeVeque and L.~N. Trefethen}, {\em Fourier analysis of the sor
  iteration}, IMA journal of numerical analysis, 8 (1988), pp.~273--279.

\bibitem{Levermore1996}
{\sc C.~Levermore}, {\em Moment closure hierarchies for kinetic theories}, J.
  Stat. Phys., 83 (1996), pp.~1021--1065.

\bibitem{Li2023}
{\sc R.~Li and Y.~Yang}, {\em On well-posed boundary conditions for the linear
  non-homogeneous moment equations in half-space}, J. Stat. Phys., 190 (2023),
  p.~185.

\bibitem{Liu2023}
{\sc W.~Liu, Z.~Liu, Z.~Zhang, C.~Teo, and C.~Shu}, {\em {G}rad's distribution
  function for 13 moments-based moment gas kinetic solver for steady and
  unsteady rarefied flows: Discrete and explicit forms}, Comput. Math. Appl.,
  137 (2023), pp.~112--125.

\bibitem{Lockerby2016}
{\sc D.~Lockerby and B.~Collyer}, {\em Fundamental solutions to moment
  equations for the simulation of microscale gas flows}, J. Fluid Mech., 806
  (2016), pp.~413--–436.

\bibitem{Muller1993}
{\sc I.~M{\"u}ller and T.~Ruggeri}, {\em Extended Thermodynamics}, vol.~37 of
  Springer tracts in natural philosophy, Springer-Verlag, New York, 1993.

\bibitem{Rana2013}
{\sc A.~Rana, M.~Torrilhon, and H.~Struchtrup}, {\em A robust numerical method
  for the {R}13 equations of rarefied gas dynamics: Application to lid driven
  cavity}, J. Comput. Phys., 236 (2013), pp.~169--186.

\bibitem{Sarna2020}
{\sc N.~Sarna, J.~Giesselmann, and M.~Torrilhon}, {\em Convergence analysis of
  {G}rad's {H}ermite expansion for linear kinetic equations}, SIAM J. Numer.
  Anal., 58 (2020), pp.~1164--1194.

\bibitem{Sarna2018}
{\sc N.~Sarna and M.~Torrilhon}, {\em Entropy stable {H}ermite approximation of
  the linearised {B}oltzmann equation for inflow and outflow boundaries}, J.
  Comput. Phys., 369 (2018), pp.~16--44.

\bibitem{Sarna2017}
{\sc N.~Sarna and M.~Torrilhon}, {\em On stable wall boundary conditions for
  the {H}ermite discretization of the linearised {B}oltzmann equation}, J.
  Stat. Phys., 170 (2018), pp.~101--126.

\bibitem{Schneider2022}
{\sc F.~Schneider and T.~Leibner}, {\em First-order continuous- and
  discontinuous-{G}alerkin moment models for a linear kinetic equation:
  {R}ealizability-preserving splitting scheme and numerical analysis}, J.
  Comput. Phys., 456 (2022), p.~111040.

\bibitem{Singh2024}
{\sc S.~Singha, H.~Songa, and M.~Torrilhon}, {\em Modal discontinuous
  {G}alerkin simulations for {G}rad's 13 moment equations: Application to
  {R}iemann problem in continuum-rarefied flow regime}, J. Comput. Theor.
  Transport,  (2024).

\bibitem{Struchtrup2002}
{\sc H.~Struchtrup}, {\em Heat transfer in the transition regime: Solution of
  boundary value problems for {G}rad's moment equations via kinetic schemes},
  Phys. Rev. E, 65 (2002), p.~041204.

\bibitem{Struchtrup2004}
{\sc H.~Struchtrup}, {\em Stable transport equations for rarefied gases at high
  orders in the {K}nudsen number}, Phys. Fluids, 16 (2004), pp.~3921--3934.

\bibitem{Struchtrup2003}
{\sc H.~Struchtrup and M.~Torrilhon}, {\em Regularization of {G}rad's 13 moment
  equations: {D}erivation and linear analysis}, Physics of Fluids, 15 (2003),
  pp.~2668--2680.

\bibitem{Struchtrup2013}
{\sc H.~Struchtrup and M.~Torrilhon}, {\em Regularized 13 moment equations for
  hard sphere molecules: Linear bulk equations}, Phys. Fluids, 25 (2013),
  p.~052001.

\bibitem{Su2020Can}
{\sc W.~Su, L.~Zhu, P.~Wang, Y.~Zhang, and L.~Wu}, {\em Can we find
  steady-state solutions to multiscale rarefied gas flows within dozens of
  iterations?}, Journal of Computational Physics, 407 (2020), p.~109245.

\bibitem{Su2020Fast}
{\sc W.~Su, L.~Zhu, and L.~Wu}, {\em Fast convergence and asymptotic preserving
  of the general synthetic iterative scheme}, SIAM Journal on Scientific
  Computing, 42 (2020), pp.~B1517--B1540.

\bibitem{Theisen2021}
{\sc L.~Theisen and M.~Torrilhon}, {\em {FenicsR13}: A tensorial mixed finite
  element solver for the linear {R13} equations using the {FEniCS} computing
  platform}, ACM Trans. Math. Softw., 47 (2021).

\bibitem{Torrilhon2006}
{\sc M.~Torrilhon}, {\em Two dimensional bulk microflow simulations based on
  regularized {G}rad's 13-moment equations}, SIAM Multiscale. Model. Simul., 5
  (2006), pp.~695--728.

\bibitem{Torrilhon2017}
{\sc M.~Torrilhon and N.~Sarna}, {\em Hierarchical {B}oltzmann simulations and
  model error estimation}, J. Comput. Phys., 342 (2017), pp.~66--84.

\bibitem{Woude2024}
{\sc D.~van~der Woude, E.~van Brummelen, E.~Arlemark, and M.~Abdelmalik}, {\em
  A {$\varphi$}-divergence based finite element moment method for the
  polyatomic {ES}-{BGK} {B}oltzmann equation}, J. Comput. Phys., 503 (2024),
  p.~112813.

\bibitem{Zeng2023}
{\sc J.~Zeng, R.~Yuan, Y.~Zhang, Q.~Li, and L.~Wu}, {\em General synthetic
  iterative scheme for polyatomic rarefied gas flows}, Computers \& Fluids, 265
  (2023), p.~105998.

\bibitem{Zhu2021}
{\sc L.~Zhu, X.~Pi, W.~Su, Z.-H. Li, Y.~Zhang, and L.~Wu}, {\em General
  synthetic iterative scheme for nonlinear gas kinetic simulation of
  multi-scale rarefied gas flows}, J. Comput. Phys., 430 (2021), p.~110091.

\end{thebibliography}

\newpage

%%================appendix========================

\appendix

\section{Convergence analysis for the first-order BSGS method} \label{appen:BSGS}

For simplicity, we study the convergence of the first-order BSGS method by assuming that the velocity variable $\bs{v}$ is also one-dimensional (denoted by $v$ below), so that the steady-state Boltzmann equation reads
\begin{equation*}
v \frac{\partial f}{\partial x}(x,v)  = \frac{1}{\epsilon} \mathcal{L}[f](x,v),
\end{equation*}
where the kernel of the linearized collision operator is
\begin{displaymath}
  \operatorname{ker} \mathcal{L} = \operatorname{span} \{ \bs{\Phi}(v) \omega(v) \},
\end{displaymath}
where
\begin{equation} \label{eq:base_phi}
\bs{\Phi}(v) = (1,v,v^2-1), \quad \omega(v) = \frac{1}{\sqrt{2\pi}} \exp \left( -\frac{v^2}{2}\right).
\end{equation}
Furthermore, we will focus on the problem without velocity discretization, so that the upwind scheme becomes
\begin{equation} \label{eq:conti_first}
- v^+ \bar{f} _{j-1} (v)
+ \left( |v| - \frac{1}{\epsilon} \Delta x \mathcal{L} \right) \bar{f}_j (v) 
+ v^- \bar{f} _{j+1} (v)
=0,
\end{equation}
where $v^+ = \max\{v,0\}$, $v^- = \min\{v,0\}$, and $\bar{f}_j(v)$ approximates the average of $f(x,v)$ in the $j$th cell. Such a simplification allows us to compute some integrals exactly and get analytical results for the convergence rate. We expect that the conclusion will also be applicable when $N$ in the moment equations \eqref{eq:moment_eqn} is sufficiently large. Note that by comparing \eqref{eq:conti_first} and \eqref{eq:1D_first}, one can find that $\bs{A}$, $\bs{L}$ and $\bar{\bs{u}}_j$ are the discrete versions of $v$, $\mathcal{L}$ and $\bar{f}_j$, respectively.

The symmetric Gauss-Seidel method can be applied to \eqref{eq:conti_first} in a similar manner to the BSGS method for the moment equations. We use $\bar{f}_j^{(n+1/2)}(v)$ to denote the solution after the left-to-right sweeping and use $\bar{f}_j^{(n+1)}(v)$ to denote the solution after the right-to-left sweeping. Assume the exact solution of \eqref{eq:conti_first} is $f_j^*(v)$. Then the error functions
\begin{displaymath}
h_j^{(n)}(v) := \bar{f}_j^{(n-1/2)}(v) - f_j^*(v), \quad e_j^{(n)}(v) := \bar{f}_j^{(n)}(v) - f_j^*(v)
\end{displaymath}
satisfy the following equations:
\begin{equation} \label{eq:iteration}
\left \{
\begin{aligned}
& {-v}^+ h_{j-1}^{(n+1)}(v)
+ \left( |v| - \frac{1}{\epsilon} \Delta x \mathcal{L} \right) h_j^{(n+1)}(v) 
+ v^- e_{j+1}^{(n)}(v)
=0, \\
& {-v}^+ h_{j-1}^{(n+1)}(v)
+ \left( |v| - \frac{1}{\epsilon} \Delta x \mathcal{L} \right) e_j^{(n+1)}(v)
+ v^- e_{j+1}^{(n+1)}(v)
=0. 
\end{aligned}
\right.
\end{equation}
A rigorous analysis of \eqref{eq:iteration} requires a close look into the boundary conditions, which becomes rather involved especially when we have wall boundary conditions. Below we will take a simpler strategy and apply the Fourier analysis by assuming periodic boundary conditions. Under this assumption, the lower/upper triangular matrices used in the symmetric Gauss-Seidel method will become circulant matrices, so that the corresponding numerical method is not exactly the one used in practice. Reference \cite{Chan1989} carried out comparison between the Fourier analysis and the classical analysis for a class of methods including the Gauss-Seidel method and the SSOR method. It is found that the two analyses provide quite similar results, although the Fourier analysis sometimes predicts a slightly slower convergence rate. Such a phenomenon is further explained in \cite{LeVeque1988}. For our purposes, the Fourier analysis can already reveal sufficient insights of the problem.

Following the Fourier stability analysis, we assume
\begin{equation} \label{eq:fourier_form}
\begin{pmatrix}
h_j^{(n)} \\
e_j^{(n)}
\end{pmatrix}
= \lambda^n \bs{w}(v) \exp (\mathrm{i} \xi j \Delta x).
\end{equation}
Substituting \eqref{eq:fourier_form} into the error equations \eqref{eq:iteration}, one obtains
\begin{equation} \label{eq:eigen_eqn}
\begin{pmatrix}
\lambda \left[ |v| - \frac{1}{\epsilon} \Delta x \mathcal{L} - v^+ e^{-\mathrm{i} \eta} \right] & v^- e^{\mathrm{i} \eta} \\
- \lambda v^+ e^{-\mathrm{i} \eta} & \lambda \left[ |v| - \frac{1}{\epsilon} \Delta x \mathcal{L} + v^- e^{\mathrm{i} \eta} \right]
\end{pmatrix}
\bs{w}(v) = \bs{0},
\end{equation}
where $\eta = \xi \Delta x$.

On account of the obstacles of solving the Boltzmann equation for low Knudsen numbers, we are more concerned about the situations where $\epsilon$ is small.
We therefore carry out asymptotic expansions of $\lambda$ and $\bs{w}(v)$ with respect to $\epsilon$:
\begin{small}
\begin{equation} \label{eq:pertur_expan}
\lambda = \lambda_0 + \epsilon \lambda_1 + \epsilon^2 \lambda_2 + \cdots, \qquad
\bs{w}(v) = \bs{w}_0 (v) + \epsilon \bs{w}_1 (v) + \epsilon^2 \bs{w}_2 (v) + \cdots.
\end{equation}
Replacing $\lambda$ and $\bs{w}(v)$ in the equations \eqref{eq:eigen_eqn} by the expansions \eqref{eq:pertur_expan}, we find that the equations for the $O(\epsilon^{-1})$, $O(1)$ and $O(\epsilon)$ terms are:
\begin{subequations} \label{eq:eps_order}
\begin{align}
\label{eq:O0}
\lambda_0
\begin{pmatrix}
\mathcal{L} & 0 \\
0 & \mathcal{L}
\end{pmatrix}
\bs{w}_0 (v) &= \bs{0}, \\[5pt]
\label{eq:O1}
\begin{pmatrix}
\lambda_0 \left( |v| - e^{-\mathrm{i} \eta} v^+ \right) & e^{\mathrm{i} \eta} v^- \\
-\lambda_0 e^{-\mathrm{i} \eta} v^+ & \lambda_0 \left( |v| + e^{\mathrm{i} \eta} v^- \right)
\end{pmatrix}
\bs{w}_0 (v) &= \Delta x 
\begin{pmatrix}
\mathcal{L} & 0 \\
0 & \mathcal{L}
\end{pmatrix}
\big[ \lambda_0 \bs{w}_1 (v) + \lambda_1 \bs{w}_0 (v) \big], \\[10pt]
\label{eq:O2}
\begin{split}
\begin{pmatrix}
\lambda_0 \left( |v| - e^{-\mathrm{i} \eta} v^+ \right) & e^{\mathrm{i} \eta} v^- \\
- \lambda_0 e^{-\mathrm{i} \eta} v^+ & \lambda_0 \left( |v| + e^{\mathrm{i} \eta} v^- \right)
\end{pmatrix}
\bs{w}_1 (v) \qquad \\
+ \begin{pmatrix}
\lambda_1 \left( |v| - e^{-\mathrm{i} \eta} v^+ \right) & 0 \\
-\lambda_1 e^{-\mathrm{i} \eta} v^+ & \lambda_1 \left( |v| + e^{\mathrm{i} \eta} v^- \right)
\end{pmatrix}
\bs{w}_0 (v) 
&= \Delta x 
\begin{pmatrix}
\mathcal{L} & 0 \\
0 & \mathcal{L}
\end{pmatrix}
\big[ \lambda_0 \bs{w}_2 (v) + \lambda_1 \bs{w}_1 (v) + \lambda_2 \bs{w}_0 (v) \big].
\end{split}
\end{align}
\end{subequations}
\end{small}
The theorem below states properties of $\lambda_0$:

\begin{theorem} \label{thm:lambda}
Let $\eta \in (0,2\pi)$ and $\Delta x > 0$. Assume that $\lambda_0$, $\lambda_1$ and $\bs{w}_0(v) $, $\bs{w}_1(v)$, $\bs{w}_2(v)$ satisfy \eqref{eq:eps_order} for any $v \in \mathbb{R}$, and $\bs{w}_0(v)$ is a nonzero function. 
Then it holds that $|\lambda_0| \leq 1/(5-4 \cos(\eta))$. The maximum value is attained when $\lambda_0 = 1/(5-4\cos(\eta))$.
\end{theorem}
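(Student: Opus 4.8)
The plan is to exploit the kernel structure of $\mathcal{L}$ together with the Fredholm alternative. First I would dispose of the trivial case $\lambda_0 = 0$, for which the bound holds automatically since $1/(5-4\cos\eta) > 0$. Assuming $\lambda_0 \neq 0$, equation \eqref{eq:O0} forces $\mathcal{L} w_0^{(1)} = \mathcal{L} w_0^{(2)} = 0$, so each component of $\bs{w}_0$ lies in $\operatorname{ker}\mathcal{L} = \operatorname{span}\{\bs{\Phi}(v)\omega(v)\}$. I would therefore write $\bs{w}_0(v) = \big((\bs{a}_1\cdot\bs{\Phi}(v))\,\omega(v),\,(\bs{a}_2\cdot\bs{\Phi}(v))\,\omega(v)\big)^T$ for coefficient vectors $\bs{a}_1,\bs{a}_2\in\mathbb{C}^3$, which reduces the unknown function $\bs w_0$ to six scalars.

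Next I would apply the solvability condition to \eqref{eq:O1}. Because $\mathcal{L}\bs{w}_0 = \bs 0$, the right-hand side of \eqref{eq:O1} equals $\Delta x\,\lambda_0\,\operatorname{diag}(\mathcal L,\mathcal L)\bs w_1$ and hence lies in the range of $\mathcal L$ componentwise; since $\mathcal{L}$ is self-adjoint and negative semidefinite, this range is the orthogonal complement of $\operatorname{ker}\mathcal{L}$ in the inner product making $\mathcal L$ self-adjoint, equivalently the set of $g$ with $\int \Phi_k(v)\, g(v)\,\mathrm dv = 0$ for $k=0,1,2$. Testing both components of the left-hand side of \eqref{eq:O1} against $\Phi_0,\Phi_1,\Phi_2$ thus yields a homogeneous $6\times 6$ system for $(\bs a_1,\bs a_2)$ whose coefficients are the half-moment matrices $M^{+}_{kl}=\int_0^\infty \Phi_k\Phi_l\, v\,\omega\,\mathrm dv$, $M^{-}_{kl}=\int_{-\infty}^0 \Phi_k\Phi_l\, v\,\omega\,\mathrm dv$ and $M^{|v|}=M^+-M^-$. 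A short Gaussian-moment computation gives
\begin{equation*}
M^{\pm}=\pm cE+O,\quad M^{|v|}=2cE,\quad
E=\begin{pmatrix}1&0&1\\0&2&0\\1&0&5\end{pmatrix},\quad
O=\begin{pmatrix}0&\tfrac12&0\\\tfrac12&0&1\\0&1&0\end{pmatrix},
\end{equation*}
with $c=1/\sqrt{2\pi}$, and the parity relation $M^-=-DM^+D$ with $D=\diag(1,-1,1)$ reflects $v\mapsto -v$.

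Writing $z=e^{\mathrm i\eta}$ and using the bottom block of the system (which carries a common factor $\lambda_0$ that cancels) to express $\bs a_1 = z\,(M^+)^{-1}\big(cE(2-z)+zO\big)\bs a_2$, I would substitute into the top block to collapse the $6\times6$ system into the $3\times3$ generalized eigenvalue problem $\lambda_0\,\bs K(z)\,\bs a_2 = -M^-\bs a_2$, where $\bs K(z)=\big(cE(2-z^{-1})-z^{-1}O\big)(M^+)^{-1}\big(cE(2-z)+zO\big)$. As a consistency check, at $z=1$ this collapses to $\lambda_0(cE-O)\bs a_2=(cE-O)\bs a_2$, giving the triple root $\lambda_0=1=1/(5-4\cos0)$ and matching the nondecaying lowest-frequency mode.

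Finally, and this is the main obstacle, I would analyze the roots of $\det\big(\lambda_0\,\bs K(z)+M^-\big)=0$. Clearing $(M^+)^{-1}$ and the powers of $z$ turns this into a cubic in $\lambda_0$ with $\eta$-dependent coefficients, and the task is to show every root obeys $|\lambda_0|\le 1/(5-4\cos\eta)$ with the bound attained by a real root equal to $1/(5-4\cos\eta)$. The crucial simplification I would look for is the identity $5-4\cos\eta=|2-z|^2$, which strongly suggests that the forward sweep contributes a scalar symbol $\sim 1/(2-z)$ and the backward sweep its conjugate $\sim 1/(2-\bar z)$, so that the cubic factors to expose the extremal root $\lambda_0=1/|2-z|^2$. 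Establishing this factorization explicitly from the entries of $E$ and $O$ (which do not commute, so no simultaneous diagonalization is available) and verifying that the remaining two roots are strictly smaller in modulus is where the genuine computation lies; identifying the attaining value as the real positive root then completes the proof.
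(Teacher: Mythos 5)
Your setup is sound and is, in substance, the same as the paper's own proof: you dispose of the trivial case $\lambda_0=0$, use \eqref{eq:O0} to place both components of $\bs{w}_0$ in $\operatorname{ker}\mathcal{L}$, and impose the solvability (conservation) condition on \eqref{eq:O1} to obtain a homogeneous $6\times 6$ system in the six kernel coefficients. Your half-moment matrices $M^{\pm}=\pm cE+O$, $M^{|v|}=2cE$ are computed correctly and reproduce exactly the paper's matrix $\bs{Q}$ in \eqref{eq:Q}. Your extra Schur-complement step, eliminating $\bs{a}_1$ through the bottom block, is legitimate: since every entry of the bottom block carries the factor $\lambda_0$, it merely factors out the trivial $\lambda_0^3$ from the sextic $\det(\bs{Q})=0$ (these are the paper's roots $\lambda_0^{(2,3,4)}=0$), leaving a cubic whose roots are the nontrivial ones. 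The $z=1$ consistency check is also correct.

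The genuine gap is that you stop exactly where the content of the theorem begins. Everything up to the cubic $\det\bigl(\lambda_0\,\bs{K}(z)+M^-\bigr)=0$ is routine bookkeeping; the theorem \emph{is} the claim about the location of its roots, and your treatment of that claim is conjectural: you ``look for'' a factorization exposing $\lambda_0=1/|2-z|^2$, motivated by a heuristic about forward and backward sweep symbols, and you explicitly defer both the factorization and the verification that the remaining two roots are strictly smaller in modulus. Neither is established, so no bound on $|\lambda_0|$ is actually proved. The paper resolves this by explicit computation: the cubic has the real root $\lambda_0^{(1)}=1/(5-4\cos\eta)$ together with a complex-conjugate pair of modulus $(16-5\pi)\big/\sqrt{(16-5\pi)^2+128(16+15\pi)(1-\cos\eta)+256(16-5\pi)(1-\cos\eta)^2}$, and the strict inequality $|\lambda_0^{(5,6)}|<|\lambda_0^{(1)}|$ follows by comparing coefficients under the square root, using $16(16+15\pi)>(16-5\pi)^2$ and $16>16-5\pi$, after writing $|\lambda_0^{(1)}|$ as $(16-5\pi)\big/\bigl[(16-5\pi)(5-4\cos\eta)\bigr]$. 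Note that the constant $16-5\pi$ entering the complex pair comes from the Gaussian half-moments, which is a warning sign for your hoped-for structural shortcut: the extremal root does split off, but the remaining quadratic does not appear to admit a symmetry-based factorization in terms of $E$ and $O$ alone, so the explicit root computation (or an equivalent quantitative argument) cannot be avoided. Until you carry it out, the proof is incomplete at its decisive step.
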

\begin{proof}
If $\lambda_0 = 0$, it clearly satisfies $|\lambda_0| \leq 1/(5-4 \cos(\eta))$. We will thus focus on the case where $\lambda_0$ is nonzero. According to \eqref{eq:O0}, both components of $\bs{w}_0$ must be in the kernel of the linearized collision operator $\mathcal{L}$, and therefore there exist $\bs{\alpha} = (\alpha_0, \alpha_1, \alpha_2)^T$ and $\bs{\beta} = (\beta_0, \beta_1, \beta_2)^T$ satisfying
\begin{equation} \label{eq:w_0}
\bs{w}_0 (v) =
\begin{pmatrix}
\bs{\Phi}(v) & \\
& \bs{\Phi}(v)
\end{pmatrix}
\begin{pmatrix}
\bs{\alpha} \\
\bs{\beta}
\end{pmatrix} \omega(v),
\end{equation}
where $\bs{\Phi}$ is defined in \eqref{eq:base_phi}. Left-multiplying both sides of  \eqref{eq:O1} by 
$
\begin{pmatrix}
\bs{\Phi}^*(v) & \\
& \bs{\Phi}^*(v)
\end{pmatrix}
$
and integrating the result with respect to $v$, we can obtain by the conservation properties of the collision operator that
\begin{equation} \label{eq:int_eps0}
\bs{Q} 
\begin{pmatrix}
\bs{\alpha} \\
\bs{\beta}
\end{pmatrix}
= \bs{0}
\end{equation}
where
\begin{small}
\begin{equation} \label{eq:Q}
\bs{Q} = \frac{1}{\sqrt{2\pi}}
\begin{pmatrix}
\lambda_0 (2 - e^{-\mathrm{i} \eta}) & -\lambda_0 \sqrt{\frac{\pi}{2}} e^{-\mathrm{i} \eta} & \lambda_0 (2 - e^{-\mathrm{i} \eta}) & -e^{\mathrm{i} \eta} &
\sqrt{\frac{\pi}{2}} e^{\mathrm{i} \eta} & -e^{\mathrm{i} \eta} \\
-\lambda_0 \sqrt{\frac{\pi}{2}} e^{-\mathrm{i} \eta} & 2 \lambda_0 (2-e^{-\mathrm{i} \eta}) & 
-\lambda_0 \sqrt{2 \pi} e^{-\mathrm{i} \eta} & \sqrt{\frac{\pi}{2}} e^{\mathrm{i} \eta} & 
-2 e^{\mathrm{i} \eta} & \sqrt{2 \pi} e^{\mathrm{i} \eta} \\
\lambda_0 (2 - e^{-\mathrm{i} \eta}) & -\lambda_0 \sqrt{2 \pi} e^{-\mathrm{i} \eta} & 
5 \lambda_0 (2 - e^{-\mathrm{i} \eta}) & -e^{\mathrm{i} \eta} & 
\sqrt{2 \pi} e^{\mathrm{i} \eta} & - 5 e^{\mathrm{i} \eta} \\
-\lambda_0 e^{-\mathrm{i} \eta} & -\lambda_0 \sqrt{\frac{\pi}{2}} e^{-\mathrm{i} \eta} & 
-\lambda_0 e^{-\mathrm{i} \eta} & \lambda_0 (2 - e^{\mathrm{i} \eta}) & 
\lambda_0 \sqrt{\frac{\pi}{2}} e^{\mathrm{i} \eta} & \lambda_0 (2 - e^{\mathrm{i} \eta}) \\
-\lambda_0 \sqrt{\frac{\pi}{2}} e^{-\mathrm{i} \eta} & -2 \lambda_0 e^{- \mathrm{i} \eta} & 
-\lambda_0 \sqrt{2 \pi} e^{-\mathrm{i} \eta} & \lambda_0 \sqrt{\frac{\pi}{2}} e^{\mathrm{i} \eta} & 
2 \lambda_0 (2 - e^{\mathrm{i} \eta}) & \lambda_0 \sqrt{2 \pi} e^{\mathrm{i} \eta} \\
-\lambda_0 e^{-\mathrm{i} \eta} & -\lambda_0 \sqrt{2 \pi} e^{-\mathrm{i} \eta} & 
-5 \lambda_0 e^{- \mathrm{i} \eta} & \lambda_0 (2 - e^{\mathrm{i} \eta}) &
\lambda_0 \sqrt{2 \pi} e^{\mathrm{i} \eta} & 5 \lambda_0 (2 - e^{\mathrm{i} \eta})
\end{pmatrix}.
\end{equation}
\end{small}
Since $\bs{w}_0$ is nonzero, the vector $\bs{\alpha}$ and $\bs{\beta}$ cannot be both zero vectors. Thus, in \eqref{eq:int_eps0}, the matrix $\bs{Q}$ must be singular, \textit{i.e.} $\det(\bs{Q})=0$. This gives a sextic equation of $\lambda_0$. The equation has four real roots:
\begin{equation*}
\lambda_0^{(1)} = \frac{1}{5-4\cos(\eta)}, \qquad
\lambda_0^{(2,3,4)} = 0.
\end{equation*}
The other two complex roots satisfy
\begin{equation*}
\begin{split}
|\lambda_0^{(5)}| = |\lambda_0^{(6)}| &=
\frac{16-5\pi}{\sqrt{(16-5\pi)^2 + 128(16+15\pi)(1-\cos(\eta)) + 256(16-5\pi)(1-\cos(\eta))^2}} \\
& < \frac{16-5\pi}{\sqrt{(16-5\pi)^2 + 8(16-5\pi)^2 (1 - \cos(\eta)) + 16(16-5\pi)^2(1 - \cos(\eta))^2}} = |\lambda_0^{(1)}|.
\end{split}
\end{equation*}
This confirms that $\lambda_0 \leq 1/(5-4 \cos(\eta))$, and the maximum value is attained if $\lambda_0 = \lambda_0^{(1)}$.
\end{proof}

Note that this estimation of $\lambda_0$ holds for any collision operators. If the collision operator $\mathcal{L}$ is the linearized BGK operator defined by
\begin{equation} \label{eq:opera_L}
\mathcal{L}[f](x,v) = \mathcal{M}[f](x,v) - f(x,v)
\end{equation}
with
\begin{equation*}
\mathcal{M}[f](v) = \left[ \int_{\mathbb{R}} \bs{\Phi}(v) f(v) \,\mathrm{d}v \right] \begin{pmatrix} 1 \\ v \\ (v^2-1) / 2\end{pmatrix} \omega(v),
\end{equation*}
then $\lambda_1$ can also be computed:

\begin{theorem}
\label{thm:lambda1}
Let $\eta \in (0,2\pi)$ and $\Delta x > 0$. Assume that $\lambda_0$, $\lambda_1$ and $\bs{w}_0(v) $, $\bs{w}_1(v)$, $\bs{w}_2(v)$ satisfy \eqref{eq:eps_order} with $\mathcal{L}$ defined by \eqref{eq:opera_L}, and $\bs{w}_0(v)$ is a nonzero function. 
If $\lambda_0 = 1/(5-4 \cos(\eta))$, then
\begin{equation} \label{eq:lambda_1}
\lambda_1 = - \frac{36 \sqrt{2 \pi} [1 - \cos (\eta)]}{5 \Delta x [5 - 4 \cos (\eta)]^2} < 0.
\end{equation}
\end{theorem}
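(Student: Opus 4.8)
The plan is to run a standard Chapman–Enskog/Fredholm-type argument on the hierarchy \eqref{eq:eps_order}, exploiting the BGK structure to make every inversion of $\mathcal{L}$ explicit, and then read off $\lambda_1$ from a solvability condition at order $O(\epsilon)$. Throughout I abbreviate the advection block on the left of \eqref{eq:O1} by
\[
\bs{M}(\lambda) = \begin{pmatrix} \lambda(|v| - e^{-\mathrm{i}\eta}v^+) & e^{\mathrm{i}\eta}v^- \\ -\lambda e^{-\mathrm{i}\eta}v^+ & \lambda(|v| + e^{\mathrm{i}\eta}v^-)\end{pmatrix},
\]
so that the matrix multiplying $\bs{w}_0(v)$ in \eqref{eq:O2} is exactly $\lambda_1\,\partial_\lambda\bs{M}(\lambda_0)$ (note the off-diagonal $e^{\mathrm{i}\eta}v^-$ carries no $\lambda$). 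The decisive simplification comes from the linearized BGK operator \eqref{eq:opera_L}: a direct check shows that $\mathcal{M}$ is the $L^2(\omega^{-1})$-orthogonal projection onto $\operatorname{span}\{\bs{\Phi}\omega\}$, so that $\mathcal{L}=\mathcal{M}-\mathrm{Id}$ annihilates $\ker\mathcal{L}=\operatorname{span}\{\bs{\Phi}\omega\}$ and acts as $-\mathrm{Id}$ on its orthogonal complement. This is what lets me solve for the microscopic part of $\bs{w}_1$ in closed form.

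First I would reuse the structure from the proof of Theorem~\ref{thm:lambda}: equation \eqref{eq:O0} forces $\bs{w}_0$ into the form \eqref{eq:w_0} with coefficient vector $(\bs{\alpha};\bs{\beta})$, and the $O(1)$ solvability condition — obtained by testing \eqref{eq:O1} against $\diag(\bs{\Phi}^*,\bs{\Phi}^*)$ and invoking the conservation identity $\int \bs{\Phi}^*\mathcal{L}[\cdot]\,\mathrm{d}v = \bs{0}$ — collapses to $\bs{Q}(\bs{\alpha};\bs{\beta}) = \bs{0}$ with $\bs{Q}$ as in \eqref{eq:Q}. Fixing $\lambda_0 = 1/(5-4\cos\eta)$, I would compute the (one-dimensional) right null vector $\bs{r}=(\bs{\alpha};\bs{\beta})$ and, since $\bs{Q}$ is not Hermitian, the left null vector $\bs{\ell}$ with $\bs{\ell}^*\bs{Q}=\bs{0}$; both are explicit vectors in $e^{\pm\mathrm{i}\eta}$ and rational numbers.

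Next I would extract $\bs{w}_1$ from \eqref{eq:O1}. Since $\mathcal{L}[\bs{w}_0]=\bs{0}$ componentwise, the $\lambda_1\bs{w}_0$ term drops out, and the $O(1)$ solvability condition guarantees that $\bs{M}(\lambda_0)\bs{w}_0$ is purely microscopic; using $\mathcal{L}=-\mathrm{Id}$ there gives the microscopic part in closed form,
\[
(\bs{w}_1)_{\mathrm{mic}} = -\frac{1}{\Delta x\,\lambda_0}\,\bs{M}(\lambda_0)\bs{w}_0,
\]
while the macroscopic part of $\bs{w}_1$ stays an undetermined kernel element. The heart of the proof is then the $O(\epsilon)$ solvability condition: testing \eqref{eq:O2} against $\diag(\bs{\Phi}^*,\bs{\Phi}^*)$ kills its entire right-hand side by conservation, leaving
\[
\int \diag(\bs{\Phi}^*,\bs{\Phi}^*)\,\bs{M}(\lambda_0)\,\bs{w}_1\,\mathrm{d}v + \lambda_1\int \diag(\bs{\Phi}^*,\bs{\Phi}^*)\,\partial_\lambda\bs{M}(\lambda_0)\,\bs{w}_0\,\mathrm{d}v = \bs{0}.
\]
The unknown macroscopic part of $\bs{w}_1$ enters this $6$-vector equation only through $\bs{Q}$ applied to its coefficients, so contracting with $\bs{\ell}$ annihilates it and produces the scalar formula
\[
\lambda_1 = -\frac{\bs{\ell}^*\displaystyle\int \diag(\bs{\Phi}^*,\bs{\Phi}^*)\,\bs{M}(\lambda_0)\,(\bs{w}_1)_{\mathrm{mic}}\,\mathrm{d}v}{\bs{\ell}^*\displaystyle\int \diag(\bs{\Phi}^*,\bs{\Phi}^*)\,\partial_\lambda\bs{M}(\lambda_0)\,\bs{w}_0\,\mathrm{d}v},
\]
which is well defined because $\lambda_0^{(1)}$ is a simple root, so the denominator is nonzero.

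Substituting $(\bs{w}_1)_{\mathrm{mic}}$ turns the numerator into the quadratic expression $\int\diag(\bs{\Phi}^*,\bs{\Phi}^*)\,\bs{M}(\lambda_0)^2\,\bs{w}_0\,\mathrm{d}v$, and every entry of both integral vectors is a half-line Gaussian moment $\int_0^\infty v^k\omega\,\mathrm{d}v$ or $\int_{-\infty}^0 v^k\omega\,\mathrm{d}v$ coming from the $v^\pm$ factors; these are what generate the $\sqrt{2\pi}$ prefactor. I expect the main obstacle to be purely computational: assembling the two six-component integral vectors, pinning down $\bs{r}$ and $\bs{\ell}$ explicitly at $\lambda_0=1/(5-4\cos\eta)$, and carrying out the contraction so that the expression collapses to $-36\sqrt{2\pi}[1-\cos\eta]/\{5\Delta x[5-4\cos\eta]^2\}$. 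Keeping the $e^{\pm\mathrm{i}\eta}$ phases and the half-moment constants straight is the delicate part, and a symbolic-algebra verification would be prudent. Once the closed form is reached, the sign claim $\lambda_1<0$ is immediate: for $\eta\in(0,2\pi)$ one has $1-\cos\eta>0$, while $\Delta x>0$ and $5-4\cos\eta>0$, so every remaining factor is positive.
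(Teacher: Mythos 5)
Your proposal takes essentially the same route as the paper's proof: both force $\bs{w}_0$ into $\ker\mathcal{L}$ via \eqref{eq:O0}, use the BGK projection structure to solve \eqref{eq:O1} explicitly for the microscopic part of $\bs{w}_1$ (the paper's $\bs{w}_{11}$ in \eqref{eq:w11} is exactly your $(\bs{w}_1)_{\mathrm{mic}} = -\tfrac{1}{\lambda_0\Delta x}\bs{M}(\lambda_0)\bs{w}_0$), and then contract the $O(\epsilon)$ moment equation with the left null vector $\bs{\ell}$ of $\bs{Q}$ so that the undetermined kernel coefficients $(\bs{\gamma};\bs{\tau})$ drop out, leaving a scalar equation for $\lambda_1$ that reduces to Gaussian half-moments. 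The differences are only cosmetic—your eigenvalue-perturbation packaging $\lambda_1 = -\bs{\ell}(\cdots)/\bs{\ell}\,\partial_\lambda\bs{Q}\,\bs{r}$ and the simple-root argument for the nonvanishing denominator, which the paper leaves implicit—so this is the paper's argument in all essentials.
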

\begin{proof}
When $\lambda_0 = 1/(5-4\cos(\eta))$, the function $\bs{w}_0(v)$ given by \eqref{eq:w_0} can be obtained by solving \eqref{eq:int_eps0}. The result is
\begin{equation} \label{eq:w0}
\bs{w}_0(v) = \begin{pmatrix}
e^{\mathrm{i} \eta} \left( 2-e^{\mathrm{i} \eta} \right) \\
1
\end{pmatrix} \left( v^2-3 \right) \omega(v).
\end{equation}
To obtain $\lambda_1$, we take moments of \eqref{eq:O2} to get
\begin{equation} \label{eq:int_eps10}
\begin{split}
&
\int_{-\infty}^{\infty}
\begin{pmatrix}
\bs{\Phi}^*(v) & \\
& \bs{\Phi}^*(v)
\end{pmatrix}
\left[
\begin{pmatrix}
\lambda_0 \left( |v| - e^{-\mathrm{i} \eta} v^+ \right) & e^{\mathrm{i} \eta} v^- \\
-\lambda_0 e^{-\mathrm{i} \eta} v^+ & \lambda_0 \left( |v| + e^{\mathrm{i} \eta} v^- \right)
\end{pmatrix}
\bs{w}_1 (v) \right. \\
& \hspace{12em} \left. +
\begin{pmatrix}
\lambda_1 \left( |v| - e^{-\mathrm{i} \eta} v^+ \right) & 0 \\
-\lambda_1 e^{-\mathrm{i} \eta} v^+ & \lambda_1 \left( |v| + e^{\mathrm{i} \eta} v^- \right)
\end{pmatrix}
\bs{w}_0 (v) 
\right] \omega(v) \mathrm{d}v
= \bs{0}.
\end{split}
\end{equation}
Since $\mathcal{L}$ is the BGK collision operator defined by \eqref{eq:opera_L}, the first-order term $\bs{w}_1(v)$ can be decomposed into $\bs{w}_1(v) = \bs{w}_{10}(v) + \bs{w}_{11}(v)$ where
\begin{equation} \label{eq:w11}
\bs{w}_{10} =
\begin{pmatrix}
\bs{\Phi}(v) & \\
& \bs{\Phi}(v)
\end{pmatrix}
\begin{pmatrix}
\bs{\gamma} \\
\bs{\tau}
\end{pmatrix} \omega(v), \quad
\bs{w}_{11}(v) = -\begin{pmatrix}
\mathcal{L} & 0 \\
0 & \mathcal{L}
\end{pmatrix}
\bs{w}_1 (v),
\end{equation}
where $\bs{\gamma} = (\gamma_0, \gamma_1, \gamma_2)^T$, $\bs{\tau} = (\tau_0, \tau_1, \tau_2)^T$ are undetermined. Plugging \eqref{eq:w11} into \eqref{eq:O1} yields
\begin{equation*}
\begin{pmatrix}
\lambda_0 \left( |v| - e^{-\mathrm{i} \eta} v^+ \right) & e^{\mathrm{i} \eta} v^- \\
-\lambda_0 e^{-\mathrm{i} \eta} v^+ & \lambda_0 \left( |v| + e^{\mathrm{i} \eta} v^- \right)
\end{pmatrix}
\bs{w}_0 (v) = - \Delta x \lambda_0 \bs{w}_{11} (v),
\end{equation*}
which allows us to express $\bs{w}_{11}(v)$ using $\bs{w}_0(v)$, so that $\bs{w}_1(v)$ can be expressed using $\bs{w}_{10}(v)$ and $\bs{w}_0(v)$:
\begin{displaymath}
\bs{w}_1(v) = \begin{pmatrix}
\bs{\Phi(v)} & \\
& \bs{\Phi(v)}
\end{pmatrix}
\begin{pmatrix}
\bs{\gamma} \\
\bs{\tau}
\end{pmatrix} \omega(v)
- \frac{1}{\lambda_0 \Delta x}
\begin{pmatrix}
\lambda_0 \left( |v| - e^{-\mathrm{i} \eta} v^+ \right) & e^{\mathrm{i} \eta} v^- \\
-\lambda_0 e^{-\mathrm{i} \eta} v^+ & \lambda_0 \left( |v| + e^{\mathrm{i} \eta} v^- \right)
\end{pmatrix} \bs{w}_0(v),
\end{displaymath}
so that \eqref{eq:int_eps10} can be rewritten as 
\begin{small}
\begin{equation} \label{eq:int_eps1}
\begin{split}
\bs{Q} \begin{pmatrix} \bs{\gamma} \\ \bs{\tau} \end{pmatrix} =
\int_{-\infty}^{\infty}
\begin{pmatrix}
\bs{\Phi^*(v)} & \\
& \bs{\Phi^*(v)}
\end{pmatrix}
\left[ \frac{1}{\Delta x}
\begin{pmatrix}
\lambda_0 \big[ |v|^2 - (2 e^{-\mathrm{i} \eta} - e^{-2 \mathrm{i} \eta}) (v^+)^2 \big] & -(2 e^{\mathrm{i} \eta} - e^{2 \mathrm{i} \eta}) (v^-)^2 \\
- \lambda_0 (2 e^{- \mathrm{i} \eta} - e^{- 2 \mathrm{i} \eta}) (v^+)^2 & \lambda_0 \big[ |v|^2 - (2 e^{\mathrm{i} \eta} - e^{2 \mathrm{i} \eta}) (v^-)^2 \big]
\end{pmatrix}
\right. \\
\left. -
\begin{pmatrix}
\lambda_1 \left( |v| - e^{-\mathrm{i} \eta} v^+ \right) & 0 \\
-\lambda_1 e^{-\mathrm{i} \eta} v^+ & \lambda_1 \left( |v| + e^{\mathrm{i} \eta} v^- \right)
\end{pmatrix}
\right] \bs{w}_0 (v) \,\mathrm{d}v,
\end{split}
\end{equation}
\end{small}
where the matrix $\bs{Q}$ has been defined in \eqref{eq:Q}. Since $\bs{Q}$ is singular, we can find its left eigenvector $\bs{\ell} = \big(2,0,-1,-2(1-2e^{\mathrm{i}\eta}), 0, 1-2e^{\mathrm{i}\eta}\big)$ satisfying $\bs{\ell} \bs{Q} = 0$. Left-multiplying both sides of \eqref{eq:int_eps1} by $\bs{\ell}$ turns the left-hand side to zero, and the right-hand side can be directly integrated by \eqref{eq:w0}. The result is
\begin{equation*} 
 \frac{5 \lambda_1 \Delta x (e^{-\mathrm{i} \eta}-2)^2 (e^{\mathrm{i} \eta}-2)^2 + 18 \sqrt{2 \pi} (e^{-\mathrm{i} \eta}-1) (e^{\mathrm{i} \eta}-1)}{2 \pi e^{-\mathrm{i} \eta} (e^{-\mathrm{i} \eta}-2) (e^{\mathrm{i} \eta}-2)} = 0,
\end{equation*}
from which $\lambda_1$ can be solved and the result is \eqref{eq:lambda_1}.
\end{proof}

These two theorems indicate that the amplification factor of the BSGS iteration is $1/[5-4 \cos (\Delta x)] + O(\epsilon)$, attained when $\xi = 1$ or $N-1$. Compared with the conventional iterative scheme, this factor does not tend to $1$ as $\epsilon$ approaches zero. Note that this factor does tend to $1$ as $\Delta x$ approaches zero. This is the typical behavior of the classical iterative methods and can usually be fixed by applying the multigrid method. This has not implemented and will be explored in our future work.

In the case of BGK collisions, we do see a slowdown of the convergence when $\epsilon$ gets smaller, since $\lambda_1$ can be bounded by
\begin{displaymath}
\lambda_1 \leqslant -\frac{36\sqrt{2\pi}(1-\cos(\Delta x)}{5\Delta x[5-4\cos(\Delta x)]^2} \approx -\frac{18}{5}\sqrt{2\pi}\Delta x.
\end{displaymath}

\section{Convergence analysis for the BSSR method} \label{appen:BSSR}

If we carry out the same convergence analysis as Appendix \ref{appen:BSGS} to BSSR method, based on the Fourier stability analysis, we can obtain the equations
\begin{equation*} 
\begin{pmatrix}
\lambda \left[ \frac{1}{4} v^+ e^{-2 \mathrm{i} \eta} - \left( \frac{1}{4} v + v^+ \right) e^{- \mathrm{i} \eta} + \left(\frac{3}{4} + \alpha \right) |v| - \frac{1}{\epsilon} \Delta x \mathcal{L} \right] 
& -\alpha |v| + \left( \frac{1}{4} v + v^- \right) e^{\mathrm{i} \eta} - \frac{1}{4} v^- e^{2 \mathrm{i} \eta} \\
\lambda \left[ \frac{1}{4} v^+ e^{-2 \mathrm{i} \eta} - \left( \frac{1}{4} v + v^+ \right) e^{- \mathrm{i} \eta} - \alpha |v| \right] 
& \lambda \left[ \left(\frac{3}{4} + \alpha \right) |v| +  \left( \frac{1}{4} v + v^+ \right) e^{- \mathrm{i} \eta} - \frac{1}{\epsilon} \Delta x \mathcal{L} \right]
\end{pmatrix}
\bs{w}(v) = \bs{0},
\end{equation*}
like \eqref{eq:eigen_eqn}. By assuming the asymptotic expansion \eqref{eq:pertur_expan}, one can find the equations of order $O(\epsilon^{-1})$ to be the same as \eqref{eq:O0}, so that \eqref{eq:w_0} still holds, while the $O(1)$ equations are
\begin{align*}
&
\begin{pmatrix}
\lambda_0 \left[ \frac{1}{4} v^+ e^{-2 \mathrm{i} \eta} - \left( \frac{1}{4} v + v^+ \right) e^{- \mathrm{i} \eta} + \left(\frac{3}{4} + \alpha \right) |v| \right] 
& -\alpha |v| + \left( \frac{1}{4} v + v^- \right) e^{\mathrm{i} \eta} - \frac{1}{4} v^- e^{2 \mathrm{i} \eta} \\
\lambda \left[ \frac{1}{4} v^+ e^{-2 \mathrm{i} \eta} - \left( \frac{1}{4} v + v^+ \right) e^{- \mathrm{i} \eta} - \alpha |v| \right] 
& \lambda \left[ \left(\frac{3}{4} + \alpha \right) |v| +  \left( \frac{1}{4} v + v^+ \right) e^{- \mathrm{i} \eta} \right]
\end{pmatrix} 
\bs{w}_0(v) \\
& \hspace{25em} = \Delta x 
\begin{pmatrix}
\mathcal{L} & 0 \\
0 & \mathcal{L}
\end{pmatrix}
\big[ \lambda_0 \bs{w}_1 (v) + \lambda_1 \bs{w}_0 (v) \big].
\end{align*}
By taking the moments of these equations, one can still get a homogeneous linear system of $\boldsymbol{\alpha}$ and $\boldsymbol{\beta}$, and the values of $\lambda_0$ can be obtained by setting the determinant of the coefficient matrix to be zero. This again results in a sextic equation of $\lambda_0$. The modulus of the root with largest magnitude, which depends on both $\alpha$ and $\eta$, is plotted in Figure \ref{fig:BSSR_lambda}, from which one can observe that  $|\lambda_0| \leqslant 1$ in all cases, showing that the convergence rate has a lower bound independent of when $\epsilon$ approaches zero.
\begin{figure}[!ht] \label{fig:BSSR_lambda}
    \centering
      \includegraphics[width=0.45\textwidth, trim=40 15 55 35, clip]{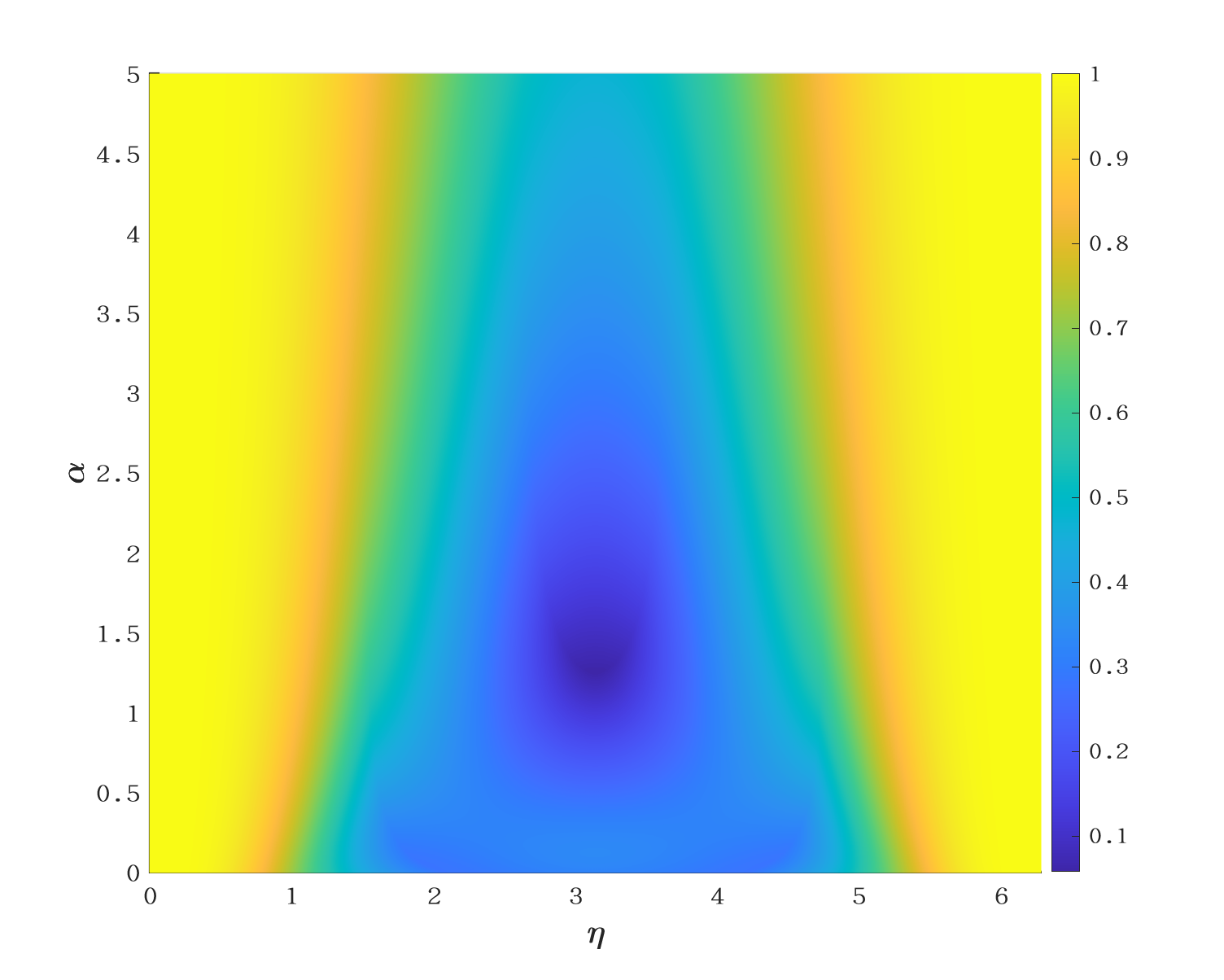}
    \caption{The distribution of the maximum value of $|\lambda_0|$.}
\end{figure}

\section{Convergence analysis for the BSGS-MM method} \label{appen:BSGS_MM}

The convergence of the BSGS-MM method is analyzed in a similar manner to that of the BSGS method in \ref{appen:BSGS}. Here we need to split the distribution function into two parts representing $\bs{u}^1$ and $\bs{u}^2$:
\begin{displaymath}
f(x,v) = f^1(x,v) + f^2(x,v),
\end{displaymath}
with $f^1 = \mathcal{P}f$ and $f^2 = (\mathcal{I} - \mathcal{P})f$, where $\mathcal{I}$ is the identity operator, and the projection operator $\mathcal{P}$ is defined by
\begin{displaymath}
\mathcal{P}f(x,v) = \sum_{n=0}^{N_0} \left( \int_{\mathbb{R}} |\varphi_n(v)|^2 \omega(v) \,\mathrm{d}v \right)^{-1} \left( \int_{\mathbb{R}} \varphi_n(v) f(x,v) \,\mathrm{d}v \right) \varphi_n(v).
\end{displaymath}
The collision operator $\mathcal{L}$ can be split similarly into the following two operators:
\begin{displaymath}
\mathcal{L}_1 = \mathcal{P} \mathcal{L}, \qquad \mathcal{L}_2 = (\mathcal{I} - \mathcal{P}) \mathcal{L}.
\end{displaymath}
Additionally, we define the operators $\mathcal{V}_{1,2}^\pm$ and $|\mathcal{V}_{1,2}|$ by
\begin{displaymath}
\mathcal{V}_1^{\pm} f = \mathcal{P} (v^{\pm} f), \qquad \mathcal{V}_2^{\pm} f = (\mathcal{I} - \mathcal{P}) (v^{\pm} f), \qquad |\mathcal{V}_1| = \mathcal{V}_1^+ - \mathcal{V}_1^-, \qquad |\mathcal{V}_2| = \mathcal{V}_2^+ - \mathcal{V}_2^-.
\end{displaymath}
Thus, the BSGS-MM method without discretization of $v$ can be written as
\begin{equation} \label{eq:BSGS_MM_iter} 
\left \{
\begin{aligned}
& - \mathcal{V}_1^+ \bar{f}^{1,(n+1)}_{j-1} (v) + \left( |\mathcal{V}_1| - \frac{1}{\epsilon} \Delta x \mathcal{L}_1 \right) \bar{f}^{1,(n+1)}_j(v) + \mathcal{V}_1^- \bar{f}^{1,(n+1)}_{j+1}(v) \\
& \hspace{16em} = \mathcal{V}_1^+ \bar{f}^{2,(n)}_{j-1}(v) - |\mathcal{V}_1| \bar{f}^{2,(n)}_j(v) - \mathcal{V}_1^- \bar{f}^{2,(n)}_{j+1}(v), \\
& - \mathcal{V}_2^+ \bar{f}^{1,(n+1)}_{j-1} (v) + |\mathcal{V}_2| \bar{f}^{1,(n+1)}_j(v) + \mathcal{V}_2^- \bar{f}^{1,(n+1)}_{j+1}(v) \\
& \hspace{10em} = \mathcal{V}_2^+ \bar{f}^{2,(n+1/2)}_{j-1}(v) - \left( |\mathcal{V}_2| - \frac{1}{\epsilon} \Delta x \mathcal{L}_2 \right) \bar{f}^{2,(n+1/2)}_j(v) - \mathcal{V}_2^- \bar{f}^{2,(n)}_{j+1}(v), \\
& - \mathcal{V}_2^+ \bar{f}^{1,(n+1)}_{j-1} (v) + |\mathcal{V}_2| \bar{f}^{1,(n+1)}_j(v) + \mathcal{V}_2^- \bar{f}^{1,(n+1)}_{j+1}(v) \\
& \hspace{10em} = \mathcal{V}_2^+ \bar{f}^{2,(n+1/2)}_{j-1}(v) - \left( |\mathcal{V}_2| - \frac{1}{\epsilon} \Delta x \mathcal{L}_2 \right) \bar{f}^{2,(n+1)}_j(v) - \mathcal{V}_2^- \bar{f}^{2,(n+1)}_{j+1}(v).
\end{aligned}
\right.
\end{equation}

Mimicking the analysis of the BSGS method, we let $f_j^*(v)$ be the exact solution of \eqref{eq:conti_first} and define the error functions
\begin{displaymath}
e_j^{1,(n)}(v) = \bar{f}_j^{1,(n)}(v) - \mathcal{P} f_j^*(v), \quad h_j^{2,(n)} = \bar{f}_j^{2,(n-1/2)} - (\mathcal{I} - \mathcal{P}) f_j^*, \quad e_j^{2,(n)}(v) = \bar{f}_j^{2,(n)}(v) - (\mathcal{I} - \mathcal{P}) f_j^*(v).
\end{displaymath}
Then the evolution of the error functions can be immediately obtained by replacing the solutions in \eqref{eq:BSGS_MM_iter} with the corresponding error functions. Note that the solution of $\bar{f}_j^1(v)$ at the current time step (the $n$th step) never appears in \eqref{eq:BSGS_MM_iter}. We can regard $\bar{f}_j^{1,(n)}$ as an auxiliary variable and consider \eqref{eq:BSGS_MM_iter} as an iterative scheme for $\bar{f}_j^2(v)$.
We now apply the Fourier analysis by assuming that
\begin{equation*}
\begin{pmatrix}
h_j^{2,(n)}(v) \\
e_j^{2,(n)}(v)
\end{pmatrix}
= \lambda^n 
\begin{pmatrix}
\eta^2(v) \\ \varepsilon^2(v)
\end{pmatrix}
\exp (\mathrm{i} \xi j \Delta x),
\end{equation*}
and $\eta^2(v)$ and $\varepsilon^2(v)$ cannot be both zero.
It is not difficult to see that $e_j^{1,(n+1)}$ also has the form $e_j^{1,(n+1)}(v) = \lambda^n \varepsilon^1(v) \exp(\mathrm{i} \xi j \Delta x)$, and the functions $\varepsilon^1(v)$, $\eta^2(v)$ and $\varepsilon^2(v)$ satisfy
\begin{equation} \label{eq:BSGS_MM_lambda}
\begin{pmatrix}
- \mathcal{V}_1^+ e^{- \mathrm{i} \eta} + |\mathcal{V}_1| + \mathcal{V}_1^- e^{\mathrm{i} \eta} - \frac{\Delta x}{\epsilon} \mathcal{L}_1 & 0 & - \mathcal{V}_1^+ e^{- \mathrm{i} \eta} + |\mathcal{V}_1| + \mathcal{V}_1^- e^{\mathrm{i} \eta} \\
- \mathcal{V}_2^+ e^{- \mathrm{i} \eta} + |\mathcal{V}_2| + \mathcal{V}_2^- e^{\mathrm{i} \eta} & - \lambda \left( \mathcal{V}_2^+ e^{- \mathrm{i} \eta} - |\mathcal{V}_2| + \frac{\Delta x}{\epsilon} \mathcal{L}_2 \right) & \mathcal{V}_2^- e^{\mathrm{i} \eta} \\
- \mathcal{V}_2^+ e^{- \mathrm{i} \eta} + |\mathcal{V}_2| + \mathcal{V}_2^- e^{\mathrm{i} \eta} & - \lambda \mathcal{V}_2^+ e^{- \mathrm{i} \eta} & \lambda \left( |\mathcal{V}_2| + \mathcal{V}_2^- e^{\mathrm{i} \eta} - \frac{\Delta x}{\epsilon} \mathcal{L}_2 \right)
\end{pmatrix}
\begin{pmatrix}
\varepsilon^1(v) \\ \eta^2(v) \\ \varepsilon^2(v)
\end{pmatrix}
 = \bs{0},
\end{equation}
where we have again used $\eta = \xi \Delta x$ for conciseness. When $\epsilon$ is small, the leading-order term is
\begin{displaymath}
    \mathcal{L}_1 \varepsilon_0^1(v) = 0, \quad
    \lambda_0 \mathcal{L}_2 \eta_0^2(v) = 0, \quad
    \lambda_0 \mathcal{L}_2 \varepsilon_0^2(v) = 0.
\end{displaymath}
Since the linear collision operator is negative semidefinite with the null space $\operatorname{ker} \mathcal{L} = \operatorname{span}\{\bs{\Phi}\omega\} \subset \operatorname{Ran} \mathcal{P}$, the operator $\mathcal{L}_2$ is negative definite on $\operatorname{Ran}(\mathcal{I}-\mathcal{P})$, so that $\lambda_0=0$, indicating fast convergence for small values of $\epsilon$.

However, when $\epsilon$ is large, the convergence of the BSGS-MM method may be slow. To demonstrate this, we solve \eqref{eq:BSGS_MM_lambda} numerically for difference values of $\epsilon$ and $\eta$. The results for the linearized BGK collision operator are plotted in Figure \ref{fig:max_lambda}. It is clear that when $\epsilon$ tends to zero, the spectral radius also tends to zero since $\bs{u}^2$ is nearly zero, and $\bs{u}^1$ is solved exactly during the iteration. 

\begin{figure}[!ht] \label{fig:max_lambda}
    \centering
    \subfigure[The maximum value of $|\lambda|$]{
    \includegraphics[width=0.4\textwidth, trim=50 10 65 45, clip]{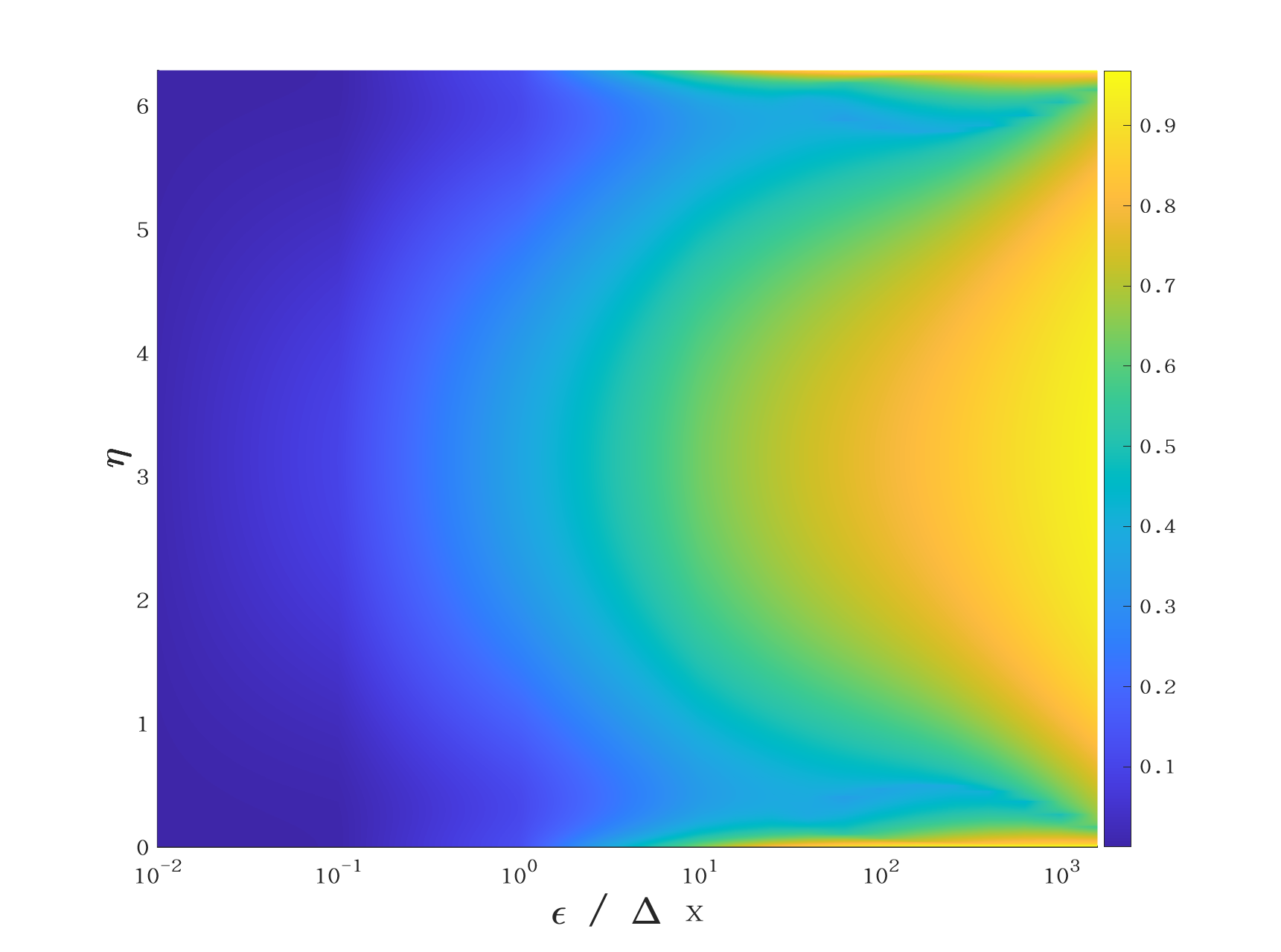}
    } \quad
    \subfigure[$\max_{\eta}|\lambda|$]{
    \includegraphics[width=0.39\textwidth, trim=5 2 35 20, clip]{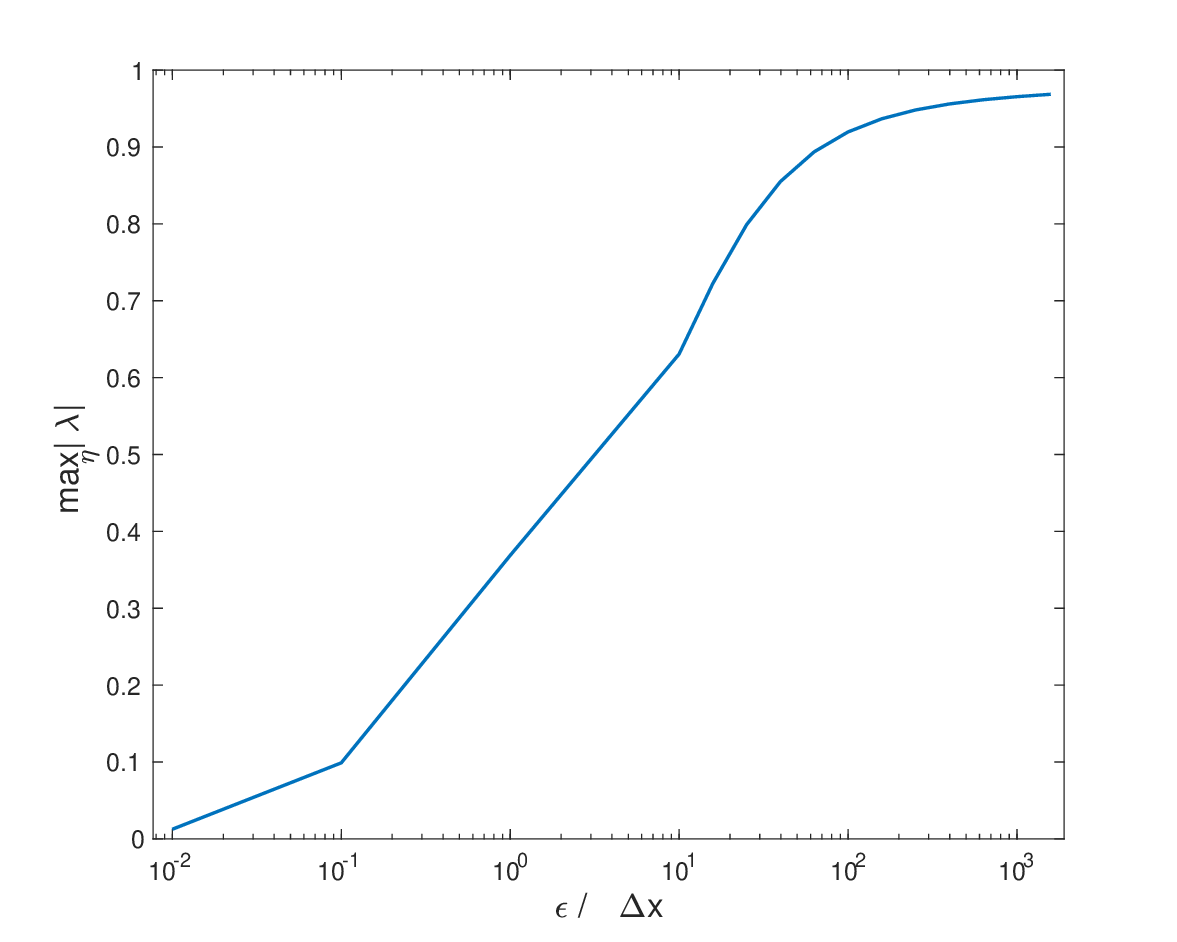}
    }
    \caption{The distribution of the maximum value of $|\lambda|$.}
\end{figure}

\end{document}